\numberwithin{equation}{section}
\definecolor{darkcyan}{rgb}{0.0, 0.55, 0.55}
\newcommand{\Hil}[0]{
\mathcal{H}
}
\newcommand{\Rd}[0]{
{\mathbb{R}^d}
}
\newcommand{\B}[0]{{\mathcal{B}}}
\newcommand{\D}[0]{{\mathcal{D}}}
\newcommand{\A}[0]{{\mathcal{A}}}
\newcommand{\J}[0]{{\mathcal{J}}}
\newcommand{\MS}[0]{{\mathcal{S}}}
\newcommand{\C}[0]{{\mathcal{C}}}
\newtheorem{theorem}{Theorem}[section]
\newtheorem{definition}[theorem]{Definition}
\newtheorem{proposition}[theorem]{Proposition}
\newtheorem{lemma}[theorem]{Lemma}
\newtheorem{corollary}[theorem]{Corollary}
\newtheorem{remark}[theorem]{Remark}
\newtheorem{conj.}[theorem]{Conjecture}
\newtheorem{Bsp.}[theorem]{Example}
\begin{document}
%%%%%%%%%%%%%%%%%%%%%%%%%%%%%%%%%%%%%%%%%%%%%%%%%%
%
% The Title
%
\title{\bf\vspace{-10
pt} 
Wiener pairs of Banach algebras of operator-valued matrices}
%%%%%%%%%%%%%%%%%%%%%%%%%%%%%%%%%%%%%%%%%%%%%%%%%%
%
% The Authors
%
\author{L. Köhldorfer and P. Balazs}

%%%%%%%%%%%%%%%%%%%%%%%%%%%%%%%%%%%%%%%%%%%%%%%%%%
%
% Do not print the date
%
\date{}

%%%%%%%%%%%%%%%%%%%%%%%%%%%%%%%%%%%%%%%%%%%%%%%%%%
%
% Make the title and set the header styles to be
%   fancy for this page.  STSIP will adjust these
%   headings later.
%
\maketitle

\begin{abstract}
In this article we consider several new examples of Wiener pairs $\A \subseteq \B$, where $\B = \B(\ell^2(X;\Hil))$ is the Banach algebra of bounded operators acting on the Hilbert space-valued Bochner sequence space $\ell^2(X;\Hil)$ and $\A = \A(X)$ is a Banach algebra consisting of operator-valued matrices indexed by some relatively separated set $X \subset \Rd$. In particular, we consider $\B(\Hil)$-valued versions of  the Jaffard algebra, of certain weighted Schur-type algebras, of Banach algebras which are defined by more general off-diagonal decay conditions than polynomial decay, of weighted versions of the Baskakov-Gohberg-Sjöstrand algebra, and of anisotropic variations of all of these matrix algebras, and show that they are inverse-closed in $\B(\ell^2(X;\Hil))$. In addition, we obtain that each of these Banach algebras is symmetric.
\end{abstract}

%%%%%%%%%%%%%%%%%%%%%%%%%%%%%%%%%%%%%%%%%%%%%%%%%%%%%%%

\section{Introduction}

From a historical point of view, both the title and the content of this article originate from \emph{Wiener's Lemma on absolutely convergent Fourier series} \cite{Wiener32}, which states that if a continuous function $f\in C(\mathbb{T})$ on the torus $\mathbb{T}$ admits an absolutely convergent Fourier series %$f(t) = \sum_{k\in \mathbb{Z}} a_k e^{2\pi i kt}$
and is nowhere vanishing, then the function $1/f$ admits an absolutely convergent Fourier series as well. In more abstract terms, Wiener's Lemma can be rephrased to a certain relation between the Banach algebra $\A(\mathbb{T})$ of absolutely convergent Fourier series and the Banach algebra $C(\mathbb{T})$ of continuous functions on $\mathbb{T}$. Indeed, $f\in \A(\mathbb{T}) \subseteq C(\mathbb{T})$ being nowhere vanishing is equivalent to $f\in \A(\mathbb{T})$ being invertible in the larger Banach algebra $C(\mathbb{T})$, and Wiener's Lemma guarantees that these two assumptions already imply that the continuous inverse $1/f \in C(\mathbb{T})$ of $f$ is contained in $\A(\mathbb{T})$ as well, i.e. that $f$ is also invertible in the smaller Banach algebra $\A(\mathbb{T})$. %In shorter terms, the latter means that $\A(\mathbb{T})$ is \emph{inverse-closed} in $C(\mathbb{T})$  \cite{Barnes2000}.

Nowadays, the above theme manifests in various kind of situations and degrees of abstraction. The general definition is the following. If $\A\subseteq \B$ is a nested pair of two (possibly non-commutative) Banach algebras $\A$ and $\B$ with common identity, then $\A$ is called \emph{inverse-closed} in $\B$ \cite{Barnes2000}, if 
\begin{equation}\label{Wienerpair}
A \in \A \, \text{ and } \, \exists A^{-1} \, \text{ in } \,  \B \qquad \Longrightarrow \qquad A^{-1} \in \A .    
\end{equation}
In this case we say that $\A\subseteq \B$ is a \emph{Wiener pair} \cite{naimark70,naimark72}. As in the example of the Wiener pair $\A(\mathbb{T}) \subseteq C(\mathbb{T})$, it is often easier to verify the invertibility of an element $A\in \A\subseteq \B$ in the larger algebra $\B$, than verifying its invertibility in the smaller algebra $\A$. Consequently, $\A$ being inverse-closed in $\B$ is a fairly strong property and thus immensely useful in a huge number of situations. In particular, Wiener's Lemma and its many generalizations, see e.g.  \cite{baskri14,ja90,barnes87,BochnerPhillips42,Baskakov1990,groelei06,Gohberg89,Sjöstrand1994-1995,GroechSjoe1,bakrus24,BalanKrishtal10,BaChKrOkRo14,ShiSun08,zbMATH06949773,sun07a,sun11,DaSo22,dasoka23,kur01}, appear in a vast number of applications such as in numerical analysis \cite{olestroh1,grrzst10,Strohmer1999,strohmer02}, approximation theory \cite{groeklotz10,corgroech04}, time-frequency and wavelet analysis \cite{ja90,corgroech04,ShiSun08}, sampling theory \cite{atbekaco03,corgroech04,BaChKrOkRo14,ALDROUBI20081667}, pseudo differential operators and differential equations \cite{Sjöstrand1994-1995,GroechSjoe1,groetoft10,zbMATH06949773}, or frames \cite{groe2003,gr04-1,forngroech1,bacahela06a,Balan2006,xxlgro14,ALDROUBI20081667}. For  more mathematical background and historical remarks on Wiener's Lemma and its variations we refer the reader to the survey \cite{gr10-2}, which is one of the main inspirations of this work. 

%A g-frame is a countable family $(T_k)_{k\in X}$ of bounded operators $T_k\in \mathcal{B}( \mathcal{H})$ on some separable Hilbert space $ \mathcal{H}$ which satisfies a frame-like inequality. In analogy to frames, every g-frame $(T_k)_{k\in X}$ allows for a linear, bounded and stable reconstruction of any vector $f\in \mathcal{H}$ from the vector-valued family $(T_k f)_{k\in X} \in \ell^2(X;\mathcal{H})$. 

Our motivation for writing this article comes from studying localization properties of g-frames, which are a generalization of frames \cite{ole1n} to families of operators satisfying a frame-like inequality. More precisely, a countable family $T=(T_k)_{k\in X}$ of bounded operators $T_k\in \B(\Hil)$ on some separable Hilbert space $\Hil$ is called a \emph{g-frame} \cite{Sun2006437}, if there exist positive constants $0<A\leq B < \infty$, such that
\begin{equation}\label{gframeineq}
A \Vert f \Vert^2 \leq \sum_{k\in X} \Vert T_k f \Vert ^2 \leq B \Vert f \Vert^2 \qquad (\forall f\in \Hil).
\end{equation}
%The above g-frame inequalities being satisfied is equivalent to the \emph{analysis operator} $C_T:\Hil \longrightarrow \ell^2(X;\Hil), C_T f = (T_k f)_{k\in X}$ being a bounded and injective operator having a closed range, where 
Every g-frame $(T_k)_{k\in X}$ allows for a linear, bounded and stable reconstruction of any vector $f\in \Hil$ from the vector-valued family $(T_k f)_{k\in X} \in \ell^2(X;\Hil)$, where
$$\ell^2(X;\Hil) = \left\lbrace (g_k)_{k\in X}: g_k \in \Hil \,\,  (\forall k\in X), (\Vert g_k \Vert)_{k\in X} \in \ell^2(X) \right\rbrace .$$
%by inverting its associated \emph{g-frame operator} $S_T = C_T^* C_T$, i.e.
%\begin{equation}\label{gframerec}
%f = S_T^{-1} S_T f = \sum_{k\in X} S_T^{-1} T_k^* T_k f \qquad (\forall f\in \Hil).
%\end{equation}
%While g-frames $(T_k f)_{k\in X}$ always enable perfect reconstruction of elements $f\in \Hil$ via (\ref{gframerec}), 
Beside their capability to achieve perfect reconstruction of vectors in $\Hil$, g-frames might satisfy other important properties which do not follow solely from the existence of the g-frame bounds $A$ and $B$. For instance, in \cite{skret2020}, g-frames for $\Hil = L^2(\mathbb{R}^d)$ of the form $(\pi(k) T \pi(k)^*)_{k\in X}$, called \emph{Gabor g-frames}, were considered, where $X = \Lambda \subseteq \mathbb{R}^{2d}$ is a full-rank lattice, $T \in \B(L^2(\mathbb{R}^d))$ is some suitable window operator, and $\pi(k) = M_{k_2} T_{k_1}$ denotes a time-frequency shift by $k=(k_1, k_2) \in \mathbb{R}^{2d}$ in the time-frequency plane \cite{gr01}. Via Fourier methods for periodic operators, which ultimately rely on the group structure of the lattice $\Lambda$, Skrettingland proved in \cite{skret2020} several results for this class of g-frames, which typically are concluded from suitable localization properties of a given frame in some abstract Hilbert space $\Hil$ \cite{forngroech1}. We are convinced that $-$ in analogy to localized frames \cite{forngroech1} $-$ these localization-type results do not rely on Fourier methods or even on the group structure of the index set $X$, but are a consequence of the intrinsic localization properties of the underlying g-frame $(T_k)_{k\in X}$, which are measured by the decay of the operator norms $\Vert T_k T_l^* \Vert$ of the entries $T_k T_l^* \in \B(\Hil)$ of the g-Gram matrix $G_T = [T_k T_l^*]_{k,l\in X} \in \B(\ell^2(X;\Hil))$. In fact, an article on g-frames, whose localization quality is measured by its g-Gram matrix belonging to some suitable Banach algebra $\A$ which is inverse-closed in the Banach algebra $\B = \B(\ell^2(X;\Hil))$ of bounded operators acting on the Hilbert space $\ell^2(X;\Hil)$ (see also \cite[Definition 5.7]{Krishtal2011}), is currently in preparation \cite{köba24inprep}. 

In view of the above motivation, the aim of this article is to prove several new examples of Wiener pairs $\A \subseteq \B(\ell^2(X;\Hil))$, where $\A = \A(X)$ is a Banach algebra of $\B(\Hil)$-valued matrices indexed by some relatively separated set $X\subset \mathbb{R}^d$. Beside the above mentioned possibility to generalize the concept of intrinsically localized frames from \cite{forngroech1} to the g-frame setting \cite{köba24inprep}, we believe that our new examples of such Wiener pairs provide a useful arsenal of techniques for operator theory \cite{douglas}, the study of Fourier series of operators \cite{feiko98}, or quantum harmonic analysis \cite{werner84,Gosson21}. 

The paper is structured as follows. In Section \ref{Notation and preliminary results} we make our notation precise and collect some preliminary facts which we will need later on. Each of the later sections is devoted to defining one specific class $\A$ of $\B(\Hil)$-valued matrices and showing that $\A$ is indeed a Banach algebra which is inverse-closed in $\B(\ell^2(X;\Hil))$. In particular, we prove these results for certain weighted Schur-type algebras $\MS_{\nu}^1(X)$ (see Section \ref{The Schur algebra}), for the Jaffard algebra $\J_{s}(X)$, which describes polynomial decay of order $s$ away from the diagonal and the Banach algebras $\J_{\nu}(X)$ or $\B_{u,s}(X)$ which are defined by more general off-diagonal decay conditions than polynomial decay (see Section \ref{Off-diagonal decay conditions via admissible weights}), for weighted versions of the Baskakov-Gohberg-Sjöstrand algebra $\C_v(\mathbb{Z}^d)$ of convolution dominated matrices (see Section \ref{The Baskakov-Gohberg-Sjöstrand algebra}), and for anisotropic variations of all of these Banach algebras (see Section \ref{Anisotropic decay conditions}). In particular, we prove that all of these Banach algebras are symmetric.

It should be pointed out that the Banach algebras $\MS_{\nu}^1(\mathbb{Z}^d)$, $\J_s(\mathbb{Z}^d)$ and $\C_{\nu}(\mathbb{Z}^d)$, and their inverse-closedness in $\B(\ell^2(\mathbb{Z}^d;\Hil))$ were already mentioned in the excellent overview article \cite{kri11}. However, we would like to emphasize that the corresponding proofs of those results in \cite{kri11} are on many occasions rather outlined than given in detail, and at the same time heavily depend on Fourier methods and other Banach algebraic methods which rely on the group structure of the index set $\mathbb{Z}^d$. Except for the Baskakov-Gohberg-Sjöstrand algebra $\C_v(\mathbb{Z}^d)$ and its anisotropic variation, all Banach algebras that are considered in the current article, however, are indexed by some arbitrary relatively separated index set $X\subset \mathbb{R}^d$ which does not necessarily possess a group structure. Investigating these more general matrix algebras seems only natural to us, since this will allow a systematic treatment of more general classes of localized g-frames, including irregular Gabor g-frames. It should be noted, however, that while the index set $X$ is not necessarily a group, the group structure of $\mathbb{R}^d$ is still inherently present in the problem due to the way matrix multiplication is defined. As noted in \cite[Remark 5.6]{zbMATH06949773}, one could also consider continuous analogs of the algebras $\MS_{\nu}^1(\mathbb{Z}^d)$, $\J_s(\mathbb{Z}^d)$ and $\C_{\nu}(\mathbb{Z}^d)$ (see, e.g., \cite{zbMATH06949773}), which are defined via a bounded uniform partition of unity (BUPU) indexed by $\mathbb{Z}^d$; and since the definition of each of the aforementioned algebras involves a \emph{balanced weight} (see Subsection \ref{Weight functions}), one may argue similarly as in \cite[Lemma 5.5]{zbMATH06949773} and deduce some of our results from the continuous setting. %discretize the continuous norm via the underlying BUPU. %This way, problems formulated in the continuous setting can be discussed via reduction to the discrete setting. %and vice versa. Analogously, we expect that employing a BUPU indexed by some relatively separated set $X\subset \mathbb{R}^d$ would yield the same continuous analogs and thus a possibility to tackle our problems in a similar manner. 

In this article, we pursue a more self-contained presentation by working with the definitions directly and employing different proof methods than the Fourier-based approaches mentioned in \cite{kri11}. %The analysis of the matrix algebras $\MS_{\nu}^1(X)$, $\J_s(X)$, $\J_{\nu}(X)$, $\B_{u,s}(X)$ and their anisotropic variations is carried out rigorously, while the proofs associated with the algebra $\C_{\nu}(\mathbb{Z}^d)$ are merely outlined. 
In particular, our analysis is based on various kinds of Banach algebraic techniques, such as \emph{Hulanicki's Lemma} (see Proposition \ref{Hulanickilemma}), \emph{Brandenburg's trick} \cite{BRANDENBURG75}, a vector-valued version of \emph{Barnes' Lemma} \cite{barnes87} and adapted techniques from \cite{groelei06}, and approaches using derivation algebras \cite{groeklotz10}. 

%%%%%%%%%%%%%%%%%%%%%%%%%%%%%%%%%%%%%%%%%%%%%%%%%%%%%%%

\section{Notation and preliminary results}\label{Notation and preliminary results}

Throughout these notes, all the index sets considered are countable. We denote the cardinality of a set $X$ by $\vert X \vert$. For $x\in \mathbb{R}^d$, the symbol $\vert x\vert$ denotes the Euclidean norm on $\mathbb{R}^d$ and $\vert x \vert_{\infty}$ the maximum norm on $\mathbb{R}^d$. The standard scalar product of $x$ and $y$ in $\mathbb{R}^d$ is denoted by $x\cdot y = \sum_{j=1}^d x_j y_j$. %If $\tau \in \mathbb{R}$, then $\lceil \tau \rceil$ denotes the smallest integer larger than or equal to $\tau$. 
The symbol $\mathbb{N}$ denotes the set of positive integers $\lbrace 1, 2, 3, \dots \rbrace$ and $\mathbb{N}_0 = \lbrace 0, 1, 2, 3, \dots \rbrace$ denotes the set of non-negative integers. As usual, for multi-indices $\alpha =(\alpha_j)_{j=1}^d, \beta =(\beta_j)_{j=1}^d \in \mathbb{N}_0^d$ we write $\vert \alpha \vert = \sum_{j=1}^d \alpha_j$, $\binom{\alpha}{\beta} = \prod_{j=1}^d \binom{\alpha_j}{\beta_j}$, $x^{\alpha} = x_1^{\alpha_1}\dots x_d^{\alpha_d}$ for $x\in \mathbb{R}^d$, and $\beta \leq \alpha$ if and only if $\beta_j \leq \alpha_j$ for all $1\leq j \leq d$. We write $\delta_{ij}$ for the Kronecker-delta. 
 
$\mathcal{H}$ is always a separable Hilbert space. %{\lukas{If a norm $\Vert \, . \, \Vert$ appears without an index referring to a specific space, $\Vert \, . \, \Vert$ denotes the norm on the given Hilbert space $\Hil$. For a closed subspace $V$ of $\mathcal{H}$, $\pi _V$ denotes the orthogonal projection onto $V$.}}
The domain, kernel and range of an operator $T$ is denoted by $\D(T)$, $\mathcal{N}(T)$ and $\mathcal{R}(T)$ respectively. The identity element on a given space $X$ is denoted by $\mathcal{I}_X$. The set of bounded operators between two (quasi-)normed spaces $X$ and $Y$ is denoted by $\mathcal{B}(X,Y)$ and we set $\mathcal{B}(X) := \mathcal{B}(X,X)$. %In case we want to emphasize the underlying spaces, we write $\Vert \, . \, \Vert_{X\rightarrow Y}$ for the operator norm in $\mathcal{B}(X,Y)$.

\subsection{Banach algebras}\label{Banach algebras}

We first recall some basic concepts from Banach algebra theory. 

A \emph{Banach algebra} is a (complex) Banach space $(\A, \Vert \, . \, \Vert_{\A})$ equipped with a bi-linear map $\cdot : \A \times \A \longrightarrow \A$, called \emph{multiplication}, which is associative and satisfies $\alpha (A\cdot B) = A\cdot (\alpha B) = (\alpha A)\cdot B$ as well as $\Vert A\cdot B \Vert_{\A} \leq \Vert A\Vert_{\A} \Vert B \Vert_{\A}$ for all $A,B \in \A, \alpha \in \mathbb{C}$. We call $\A$ \emph{unital} if $\A$ contains a neutral element $\mathcal{I}_{\A}$ with respect to multiplication. An element $A$ of a unital Banach algebra $\A$ is called \emph{invertible}, if there exists an element $A^{-1} \in \A$ such that $A\cdot A^{-1} = A^{-1} \cdot A = \mathcal{I}_{\A}$. An involution on $\A$ is a continuous conjugate linear map $^*:\A \longrightarrow \A$ satisfying $(A^*)^* = A$ and $(A\cdot B)^* = B^* \cdot A^*$ for all $A,B\in \A$. If $\A$ possesses an involution which is an isometry, then $\A$ is called a \emph{Banach *-algebra}. 

Let $\A$ be a unital Banach algebra. For any $A$ in $\A$, the \emph{resolvent set} of $A$ is defined by $\rho_{\A}(A) := \lbrace \lambda \in \mathbb{C}: \lambda \mathcal{I}_{\A} - A \, \, \text{is invertible}\rbrace$. The \emph{spectrum} of $A$ is the compact set defined by $\sigma_{\A}(A):= \mathbb{C}\setminus \rho_{\A}(A)$. The \emph{spectral radius} of $A$ is defined by $r_{\A}(A) = \max \lbrace \vert\lambda\vert: \lambda \in \sigma_{\A}(A) \rbrace$. By \emph{Gelfand's formula}, the spectral radius of any $A\in \A$ can be computed via
\begin{equation}\label{Gelfand}
    r_{\A}(A) = \lim_{n\rightarrow \infty} \Vert A^n \Vert_{\A}^{\frac{1}{n}}. 
\end{equation}
If $\A$ and $\B$ are two unital Banach algebras (Banach *-algebras) with a common identity, then we write $\A\subseteq \B$, if $\A$ is contained in $\B$ and if the inclusion map from $\A$ into $\B$ is an algebra homomorphism (*-homomorphism). If $\A$ and $\B$ are two unital Banach algebras with common identity, then \cite{gr10-2}
\begin{equation}\label{spectrum1}
    \A \subseteq \B \qquad \Longrightarrow \qquad \sigma_{\B}(A) \subseteq \sigma_{\A}(A) \quad (\forall A\in \A).
\end{equation}
Such a nested pair $\A \subseteq \B$ of unital Banach algebras with common identity is called a \emph{Wiener pair} if $\A$ is \emph{inverse-closed} in $\B$, see (\ref{Wienerpair}). One can show \cite{gr10-2} that a nested pair $\A \subseteq \B$ of unital Banach algebras with common identity forms a Wiener pair if and only if $\sigma_{\A}(A) = \sigma_{\B}(A)$ for all $A\in \A$. This is the reason why the term \emph{spectral invariance} is often used synonymously when discussing inverse closedness \cite{gr10-2}. 

Verifying whether $\A \subseteq \B$ forms a Wiener pair is often a rather subtle challenge. However, if $\A$ and $\B$ are Banach *-algebras and $\B$ is \emph{symmetric}, which means that $\sigma_{\B}(A^* A) \subseteq [0,\infty )$ for all $A\in \B$, a powerful trick by Hulanicki allows an analytical treatment of this task. 

\begin{proposition}[Hulanicki's Lemma]\label{Hulanickilemma}\cite{Hulanicki1972}
Let $\mathcal{A}\subseteq \mathcal{B}$ be a pair of unital Banach *-algebras with common identity, and suppose that $\mathcal{B}$ is symmetric. Then the following are equivalent:
\begin{itemize}
\item[(i)] $\mathcal{A}$ is inverse-closed in $\mathcal{B}$.
\item[(ii)] $r_{\mathcal{A}}(A)=r_{\mathcal{B}}(A) \qquad (\forall A=A^* \in \mathcal{A})$.
\item[(iii)] $r_{\mathcal{A}}(A)\leq r_{\mathcal{B}}(A) \qquad (\forall A=A^* \in \mathcal{A})$.
\end{itemize}
Moreover, if one of these conditions holds, then $\A$ is symmetric as well. 
\end{proposition}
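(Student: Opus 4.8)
The plan is to establish the cycle of implications (i) $\Rightarrow$ (ii) $\Rightarrow$ (iii) $\Rightarrow$ (i) and then to read off symmetry of $\A$ from the proof of the last implication. The two easy steps come first. For (i) $\Rightarrow$ (ii) I would invoke the characterization recalled above, namely that $\A \subseteq \B$ is a Wiener pair if and only if $\sigma_{\A}(A) = \sigma_{\B}(A)$ for every $A \in \A$; equality of the spectra forces equality of the spectral radii, in particular for self-adjoint elements, which is exactly (ii). The implication (ii) $\Rightarrow$ (iii) is trivial. Hence all of the work sits in (iii) $\Rightarrow$ (i).

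For this, I would first note that the inclusion $\A \subseteq \B$ always yields $\sigma_{\B}(A) \subseteq \sigma_{\A}(A)$ by (\ref{spectrum1}), and hence $r_{\B}(A) \leq r_{\A}(A)$ for all $A \in \A$; combined with (iii) this upgrades to $r_{\A}(A) = r_{\B}(A)$ for every $A = A^* \in \A$. The heart of the argument is the claim that, under this spectral-radius equality, one in fact has $\sigma_{\A}(A) = \sigma_{\B}(A)$ for every self-adjoint $A \in \A$. To prove the claim, fix $A = A^*$ and let $p$ range over polynomials with real coefficients. Then $p(A)$ is again self-adjoint, so the spectral mapping theorem (valid in any unital Banach algebra) together with the radius equality gives $\max_{\lambda \in \sigma_{\A}(A)} |p(\lambda)| = r_{\A}(p(A)) = r_{\B}(p(A)) = \max_{\lambda \in \sigma_{\B}(A)} |p(\lambda)|$. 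Since $\B$ is symmetric, $\sigma_{\B}(A) \subseteq \mathbb{R}$, so the right-hand maximum runs over a compact real set.

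If the inclusion $\sigma_{\B}(A) \subseteq \sigma_{\A}(A)$ were strict, I would pick $\lambda_0 \in \sigma_{\A}(A) \setminus \sigma_{\B}(A)$ and construct a real polynomial $p$ with $|p(\lambda_0)| > \max_{\lambda \in \sigma_{\B}(A)}|p(\lambda)|$, contradicting the displayed identity. When $\lambda_0$ is real this is a standard Weierstrass-type peaking argument on the compact real set $\sigma_{\B}(A) \cup \{\lambda_0\}$; when $\lambda_0 = a + bi$ with $b \neq 0$ the explicit choice $p(t) = 1 - (t-a)^2/c$ with $c$ larger than $\max_{\lambda \in \sigma_{\B}(A)}(\lambda - a)^2$ works, since then $|p(\lambda_0)| = 1 + b^2/c > 1 \geq \max_{\lambda \in \sigma_{\B}(A)}|p(\lambda)|$. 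This proves the claim. With the claim in hand, inverse-closedness follows by the usual reduction to positive elements: if $T \in \A$ is invertible in $\B$, then $T^*T \in \A$ is self-adjoint and invertible in $\B$, so $0 \notin \sigma_{\B}(T^*T) = \sigma_{\A}(T^*T)$, whence $(T^*T)^{-1} \in \A$ and $T^{-1} = (T^*T)^{-1}T^* \in \A$, giving (i). The same identity $\sigma_{\A}(T^*T) = \sigma_{\B}(T^*T) \subseteq [0,\infty)$ shows that $\A$ is symmetric, settling the final assertion.

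The main obstacle is the key claim, and inside it the single genuine use of symmetry: I need $\sigma_{\B}(A) \subseteq \mathbb{R}$ for self-adjoint $A$, i.e. the statement that a symmetric Banach *-algebra has self-adjoint elements with real spectrum (the nontrivial direction, due to Shirali--Ford), which I would cite rather than reprove. The polynomial peaking construction is then elementary but must be handled separately for real and non-real target points, and some care is needed because $\sigma_{\A}(A)$ is a priori only known to be a compact subset of $\mathbb{C}$, not of $\mathbb{R}$.
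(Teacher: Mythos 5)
The paper never proves this statement: it is imported wholesale by citation from Hulanicki's original work and then used as a black box (it is the engine behind Theorems \ref{Jaffardinverseclosed}, \ref{thm47}, \ref{Schurspectraltheorem}, \ref{Busthm} and \ref{anisotropicA}). So there is no in-paper proof to compare against, and your proposal must be judged on its own merits; it stands up. The organization (i) $\Rightarrow$ (ii) $\Rightarrow$ (iii) $\Rightarrow$ (i) is sound, with (i) $\Rightarrow$ (ii) following from the spectral-invariance characterization of Wiener pairs recalled in Subsection \ref{Banach algebras}, and all the work in (iii) $\Rightarrow$ (i). Your central claim --- that equality of spectral radii on self-adjoint elements forces equality of spectra on self-adjoint elements --- is correctly established by the polynomial-peaking argument: real polynomials preserve self-adjointness, the polynomial spectral mapping theorem holds in any unital Banach algebra, and symmetry of $\B$ confines $\sigma_{\B}(A)$ to $\mathbb{R}$, so that the parabola $p(t) = 1-(t-a)^2/c$ peaks at a non-real $\lambda_0 = a+bi$ while staying bounded by $1$ in modulus on $\sigma_{\B}(A)$; the real-$\lambda_0$ case via Weierstrass approximation on a compact interval containing $\sigma_{\B}(A)\cup\lbrace \lambda_0\rbrace$ is also fine. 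The reduction of invertibility of $T$ to invertibility of the self-adjoint element $T^*T$ (a left inverse of an element invertible in $\B$ is the inverse), and reading off symmetry of $\A$ from $\sigma_{\A}(T^*T)=\sigma_{\B}(T^*T)\subseteq [0,\infty)$, are both correct. One attribution quibble: the fact you actually need --- that a symmetric Banach *-algebra has real spectra on self-adjoint elements --- is the \emph{elementary} direction, obtained by applying the spectral mapping theorem to $(A+t\mathcal{I})^2 = (A+t\mathcal{I})^*(A+t\mathcal{I})$ for all real $t$, which forces $(\lambda+t)^2 \geq 0$ for every $\lambda \in \sigma_{\B}(A)$ and hence $\lambda \in \mathbb{R}$. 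The Shirali--Ford theorem is the hard converse (real spectra on self-adjoint elements implies symmetry); citing it does no harm, but you are invoking a far stronger result than necessary.
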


We will also need some facts from derivation algebras. Let $\A$ be a symmetric Banach algebra and $\delta: \D(\delta) = \D(\delta, \A) \longrightarrow \A$ be a closed (possibly unbounded) linear operator, where the domain $\D(\delta) = \D(\delta, \A)$ is an arbitrary subspace of $\A$. Such an operator $\delta$ is called a \emph{derivation on $\A$}, if the Leibniz rule holds true on $\D(\delta)$, i.e. if
\begin{equation}\label{Leibniz}
    \delta(A\cdot B) = A \cdot \delta(B) + \delta(A) \cdot B \qquad (\forall A,B \in \D(\delta)).
\end{equation}
If $\A$ is involutive, we additionally assume that the domain $\D(\delta)$ is invariant under involution (i.e. $A^* \in \D(\delta)$ whenever $A\in \D(\delta)$) and that $\delta(A^*) = \delta(A)^*$ for all $A\in \D(\delta)$. Equipped with the graph norm 
\begin{equation}
\Vert A \Vert_{\D(\delta)} = \Vert A \Vert_{\A} + \Vert \delta(A) \Vert_{\A},   
\end{equation}
$\D(\delta)$ then forms a (not necessarily unital) symmetric Banach (*-)algebra whenever $\A$ is a symmetric unital Banach (*-)algebra \cite[Theorem 3.4]{groeklotz10}.

\subsection{Weight functions}\label{Weight functions}

A \emph{weight function} on $\mathbb{R}^{d}$, or simply a \emph{weight}, is a continuous and positive function $\nu :\mathbb{R}^{d} \longrightarrow (0,\infty )$. In the sections ahead we will consider weighted versions of various kinds of matrix algebras. The following classes of weight functions are fundamental for the study of these matrix algebras. 

A weight $\nu$ is called \emph{submultiplicative}, if 
$$\nu (x+x') \leq \nu(x)\nu(x') \qquad (\forall x, x' \in \mathbb{R}^{d}),$$
and \emph{symmetric} if 
$$\nu(x) = \nu(-x) \qquad (\forall x \in \mathbb{R}^d).$$
A weight $m$ is called \emph{$\nu$-moderate} if there exists some constant $C>0$ such that 
$$m(x+x') \leq Cm(x)\nu(x') \qquad (\forall x, x' \in \mathbb{R}^{d}).$$
A weight $\nu$ is called \emph{balanced}, if there exist $a,b \in (0,\infty)$, such that 
$$a \leq \inf_{x'\in [0,1]^d}\frac{\nu(x+x')}{\nu(x)} \leq \sup_{x'\in [0,1]^d}\frac{\nu(x+x')}{\nu(x)} \leq b \qquad (\forall x\in \Rd).$$
We say that $\nu$ satisfies the \emph{GRS-condition} (Gelfand-Raikov-Shilov condition \cite{GRS64}), if
$$\lim_{n\rightarrow \infty} (\nu(nz))^{\frac{1}{n}} = 1 \qquad (\forall z\in \mathbb{R}^{d}).$$
Note that any submultiplicative weight $\nu$ is balanced and $\nu$-moderate and that any submultiplicative and symmetric weight $\nu$ satisfies $\nu \geq 1$ on $\mathbb{R}^d$.

Typical examples of weights are the functions $\nu(x) = e^{\alpha \vert x \vert^{\beta}}(1+\vert x\vert)^s$. As pointed out in \cite[Example 5.4]{zbMATH06949773}, these weights $\nu$ are balanced whenever $0\leq \beta \leq 1$ and submultiplicative if additionally $\alpha,s \geq 0$. Under these conditions, $\nu$ satisfies the GRS-condition if and only if $0\leq \beta <1$. More general weight functions are given by mixtures of the form 
$$\nu(x) = e^{\alpha \vert x \vert^{\beta}}(1+\vert x \vert)^s (\log(e+\vert x\vert))^t).$$
For $s\geq 0$, $\alpha >0$, $\beta \in (0,1)$ and $t\geq 0$, these weights are examples of \emph{admissible weights}, as defined below, and will play an important role in this article.
\begin{definition}\label{charlyadmissibledef}
\cite{groelei06} A weight function $\nu$ on $\mathbb{R}^d$ is called \emph{admissible}, if 
\begin{itemize}
    \item[(a)] $\nu$ is of the form
\begin{equation}\label{admissible}
\nu(x) = e^{\rho(\Vert x \Vert)} \qquad (x\in \mathbb{R}^d),
\end{equation}
where $\rho:[0,\infty) \longrightarrow [0,\infty)$ is a continuous and concave function with $\rho(0) = 0$ and $\Vert \, . \, \Vert$ any norm on $\mathbb{R}^d$,
    \item[(b)] $\nu$ satisfies the GRS-condition.
\end{itemize}
\end{definition}

The assumptions on $\rho$ imply that $\rho$ is subadditive. Together with the properties of a norm, we immediately see that admissible weights are submultiplicative, balanced, symmetric and satisfy $\nu(0)=1$. In particular, $\nu \geq 1$ on $\mathbb{R}^d$. %Standard examples of admissible weights are weight functions of the form
%$$\nu(x) = e^{\alpha \vert x \vert^{\beta}}(1+\vert x \vert)^s (\log(e+\vert x\vert))^t)$$
%for $s\geq 0$, $\alpha >0$, $\beta \in (0,1)$ and $t\geq 0$ \cite{groelei06}.

We refer the reader to the article \cite{gr06weightsinTFA} for a comprehensive overview on various classes of weight functions and their relevance in Banach algebra theory. 

\subsection{Relatively separated index sets}\label{Relatively separated index sets}

%A countable set $X\subset \Rd$ is called %\emph{separated} if 
%\begin{equation}\label{separated}
%    \inf_{x,y\in X, x\neq y} \vert x-y \vert = \delta > 0.
%\end{equation}
A countable set $X\subset \Rd$ is called \emph{relatively separated} if
\begin{equation}\label{relativelyseparated}
\sup_{x\in \Rd} \vert X \cap (x+[0,1]^d) \vert < \infty.
\end{equation}
%Every relatively separated set is a finite union of separated sets \cite[Sec. 9.1]{ole1n}. 

We collect the following preparatory result for later reference.

\begin{lemma}\label{separatedlemma}
    Let $X\subset \Rd$ be a relatively separated set. 
\begin{itemize}
    \item[(a)] \cite[Lemma 1]{gr04-1} For any $s>d$, there exists a constant $C=C(s)>0$ such that 
    $$\sup_{x\in \Rd} \sum_{k\in X} (1+\vert x-k\vert)^{-s} = C <\infty$$
    \item[(b)] \cite[Lemma 2 (a)]{gr04-1} For any $s>d$, there exists a constant $C=C(s)>0$ such that 
    $$\sum_{n\in X} (1+\vert k-n\vert)^{-s}(1+\vert l-n\vert)^{-s} \leq C (1+\vert k-l \vert)^{-s} \qquad (\forall k,l\in X).$$
\end{itemize}
\end{lemma}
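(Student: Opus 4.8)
The plan is to prove part (a) first, and then derive part (b) from (a) by a triangle-inequality splitting argument. For part (a), I would reduce to the case of a \emph{separated} set and compare the sum to an integral. Since every relatively separated $X$ is a finite union $X = \bigcup_{r=1}^N X_r$ of separated sets, and the desired bound is additive over this union, it suffices to bound $\sum_{k\in X_r}(1+\vert x-k\vert)^{-s}$ uniformly in $x$ for each separated $X_r$ with separation $\delta_r>0$. The key observation is that the balls $B(k,\delta_r/2)$, $k\in X_r$, are pairwise disjoint. For $y\in B(k,\delta_r/2)$ the triangle inequality gives $1+\vert x-y\vert \leq (1+\delta_r/2)(1+\vert x-k\vert)$, hence
\begin{equation*}
(1+\vert x-k\vert)^{-s} \leq \frac{(1+\delta_r/2)^s}{\mathrm{vol}\,B(k,\delta_r/2)}\int_{B(k,\delta_r/2)}(1+\vert x-y\vert)^{-s}\,dy .
\end{equation*}
Summing over $k\in X_r$ and using disjointness of the balls, the right-hand side collapses to a constant multiple of $\int_{\Rd}(1+\vert x-y\vert)^{-s}\,dy = \int_{\Rd}(1+\vert z\vert)^{-s}\,dz$, which by passing to polar coordinates is finite precisely because $s>d$, and is independent of $x$. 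Summing the $N$ resulting bounds yields (a).

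For part (b), I would fix $k,l\in X$, write $D=\vert k-l\vert$, and split the index set into $A=\lbrace n\in X : \vert k-n\vert \geq \vert l-n\vert \rbrace$ and its complement $B=X\setminus A$. For $n\in A$ the triangle inequality gives $2\vert k-n\vert \geq \vert k-n\vert+\vert l-n\vert \geq \vert k-l\vert$, so $1+\vert k-n\vert \geq \tfrac12(1+\vert k-l\vert)$ and therefore $(1+\vert k-n\vert)^{-s}\leq 2^s(1+\vert k-l\vert)^{-s}$. Consequently
\begin{equation*}
\sum_{n\in A}(1+\vert k-n\vert)^{-s}(1+\vert l-n\vert)^{-s} \leq 2^s(1+\vert k-l\vert)^{-s}\sum_{n\in A}(1+\vert l-n\vert)^{-s},
\end{equation*}
and the remaining sum is bounded by the constant $C(s)$ from part (a) applied with $x=l$. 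The symmetric estimate on $B$ (pulling out $(1+\vert l-n\vert)^{-s}$ and applying (a) with $x=k$) gives the same bound, so adding the two contributions proves (b) with constant $2^{s+1}C(s)$.

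The main obstacle is confined to part (a), namely making the sum-to-integral comparison fully rigorous: one must verify that the separation constant $\delta_r$ only enters through an $x$-independent prefactor, so that the final bound is genuinely uniform over all $x\in\Rd$, and one must check that the integral converges exactly under the hypothesis $s>d$. A clean alternative that avoids the union-of-separated-sets decomposition is to tile $\Rd$ by the half-open unit cubes $Q_j=j+[0,1)^d$, $j\in\mathbb{Z}^d$; relative separation bounds $\vert X\cap Q_j\vert$ by a fixed $N$, and on each cube one estimates the summand by $(1+\mathrm{dist}(x,Q_j))^{-s}$, reducing matters to the uniform boundedness of $\sum_{j\in\mathbb{Z}^d}(1+\mathrm{dist}(x,Q_j))^{-s}$, which again follows from $s>d$ by counting the $O(m^{d-1})$ cubes at distance about $m$. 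Once (a) is established, part (b) is routine.
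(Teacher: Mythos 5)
Your proof is correct, but there is nothing in the paper to compare it against: Lemma \ref{separatedlemma} is stated there without proof, quoted directly from the cited literature (\cite{gr04-1}), so the paper's ``own proof'' is a citation. Judged on its merits, your argument is sound and is essentially the standard one from that literature. In part (a), the reduction to finitely many separated subsets, the disjointness of the balls $B(k,\delta_r/2)$, and the pointwise bound $1+\vert x-y\vert \leq (1+\delta_r/2)(1+\vert x-k\vert)$ all check out; the prefactor $(1+\delta_r/2)^s/\mathrm{vol}\,B(k,\delta_r/2)$ is independent of $x$, and $\int_{\Rd}(1+\vert z\vert)^{-s}\,dz<\infty$ exactly when $s>d$, so the bound is genuinely uniform in $x$. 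In part (b), the splitting of $X$ by whether $\vert k-n\vert\geq\vert l-n\vert$, the resulting bound $(1+\vert k-n\vert)^{-s}\leq 2^s(1+\vert k-l\vert)^{-s}$ on the larger factor, and the application of (a) at $x=l$ (respectively $x=k$) give the claim with constant $2^{s+1}C(s)$; this is precisely how such convolution-type estimates are proved. It is worth noting that your alternative cube-tiling argument for (a) is actually the closest in spirit to what this paper does elsewhere: the proof of Lemma \ref{jakoblemma} likewise covers index sets by translated unit cubes, uses relative separation to bound the number of points per cube, and counts the $O(m^{d-1})$ lattice points at distance $m$ to sum $m^{d-1-s}$, convergent for $s>d$. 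Either route you describe is complete and rigorous.
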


\subsection{Bochner sequence spaces}

Occasionally, our analysis will rely on properties on Banach space-valued $\ell^p$-spaces, defined as follows. If $B$ is a Banach space, $X$ a countable index set, and $\nu = (\nu_k)_{k\in X}$ a family of positive numbers (e.g. samples of a weight function as in Subsection \ref{Weight functions}), then for each $p\in [1,\infty]$ the Bochner sequence space $\ell^p(X; B)$ \cite[Chapter 1]{HyNeVeWe16} is defined by 
$$\ell_{\nu}^p(X; B) := \left\lbrace (Y_k)_{k\in X}: Y_k \in B \,\,  (\forall k\in X), (\Vert Y_k \Vert_B \cdot \nu_k)_{k\in X} \in \ell^p(X) \right\rbrace .$$
In case $\nu_k = 1$ for all $k\in X$, we write $\ell_{\nu}^p(X; B) = \ell^p(X; B)$. 

The Bochner sequence spaces $\ell_{\nu}^p(X; B)$ share many properties with the classical $\ell^p$-spaces. In particular (see \cite[Chapter 1]{HyNeVeWe16}), $\ell_{\nu}^p(X; B)$ is a Banach space with respect to the norm 
$$\Vert (Y_k)_{k\in X} \Vert_{\ell_{\nu}^p(X; B)} = \Vert (\Vert Y_k \Vert_B \cdot \nu_k)_{k\in X}\Vert_{\ell^p(X)}$$
for every $1\leq p \leq \infty$. If $1\leq p <\infty$ and $\frac{1}{p}+\frac{1}{q} = 1$, then the dual space $(\ell_{\nu}^p(X; B))^*$ of $\ell_{\nu}^p(X; B)$ is isometrically isomorphic to $\ell_{1/\nu}^q(X; B^*)$. Hence, we identify the latter with $(\ell_{\nu}^p(X; B))^*$ from now on. Moreover, Riesz-Thorin interpolation holds. For later reference, we state the following special case of \cite[Theorem 2.2.1]{HyNeVeWe16}.

\begin{proposition}[Riesz-Thorin interpolation]\label{Riesz-Thorin}
Suppose that $A\in \mathcal{B}(\ell_{\nu}^1(X; B))$ and $A\in \mathcal{B}(\ell_{\nu}^{\infty}(X; B))$. Then $A\in \mathcal{B}(\ell_{\nu}^p(X; B))$ for every $1 < p < \infty$ and 
$$\Vert A \Vert_{\mathcal{B}(\ell_{\nu}^p(X; B))} \leq \max \lbrace \Vert A \Vert_{\mathcal{B}(\ell_{\nu}^1(X; B))}, \Vert A \Vert_{\mathcal{B}(\ell_{\nu}^{\infty}(X; B))} \rbrace \qquad (\forall 1 \leq p \leq \infty).$$
\end{proposition}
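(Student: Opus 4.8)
The plan is to recognize Proposition~\ref{Riesz-Thorin} as the special case $(p_0,p_1)=(1,\infty)$ of complex (Riesz--Thorin) interpolation for Bochner sequence spaces, so that it may be deduced directly from \cite[Theorem~2.2.1]{HyNeVeWe16}; nevertheless, let me outline the self-contained argument via the Hadamard three-lines lemma. First I would remove the weight: the diagonal map $D_\nu\colon (Y_k)_{k\in X}\mapsto(\nu_k Y_k)_{k\in X}$ is an isometric isomorphism of $\ell_\nu^p(X;B)$ onto $\ell^p(X;B)$ for every $p\in[1,\infty]$ simultaneously, and $A$ is bounded on $\ell_\nu^p(X;B)$ with a given norm if and only if $D_\nu A D_\nu^{-1}$ is bounded on $\ell^p(X;B)$ with the same norm. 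Hence I may assume $\nu\equiv 1$. Writing $M:=\max\{\Vert A\Vert_{\mathcal B(\ell^1(X;B))},\Vert A\Vert_{\mathcal B(\ell^\infty(X;B))}\}$ (so that the endpoint cases $p\in\{1,\infty\}$ are contained in the hypothesis) and fixing $1<p<\infty$, I set $\theta:=1-1/p\in(0,1)$ and let $q$ be the conjugate exponent, so that $1/q=\theta$; it then suffices to prove the sharper bound $\Vert A\Vert_{\mathcal B(\ell^p(X;B))}\le M$, since the geometric mean $\Vert A\Vert_{\mathcal B(\ell^1)}^{1-\theta}\Vert A\Vert_{\mathcal B(\ell^\infty)}^{\theta}$ of the two endpoint norms is dominated by their maximum.

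Next I would pass to a dense, essentially finite situation. Since the finitely supported sequences are dense in $\ell^p(X;B)$ and, by the duality $(\ell^p(X;B))^*\cong\ell^q(X;B^*)$ recorded above, also norm the space, it is enough to bound $|\langle AY,Z\rangle|$ for finitely supported $Y\in\ell^p(X;B)$ and $Z\in\ell^q(X;B^*)$ with $\Vert Y\Vert_p=\Vert Z\Vert_q=1$. The key device is to analytically continue only the \emph{scalar magnitudes} while freezing the \emph{directions}: writing $Y_k=\Vert Y_k\Vert_B\,u_k$ and $Z_k=\Vert Z_k\Vert_{B^*}\,v_k$ with unit vectors $u_k\in B$, $v_k\in B^*$ (and the zero vector where the coordinate vanishes), I define on the closed strip $\overline S=\{z\in\mathbb C:0\le\operatorname{Re}z\le1\}$ the families
\begin{equation*}
Y_k(z)=\Vert Y_k\Vert_B^{\,p(1-z)}\,u_k,\qquad Z_k(z)=\Vert Z_k\Vert_{B^*}^{\,qz}\,v_k,
\end{equation*}
so that $Y_k(\theta)=Y_k$ and $Z_k(\theta)=Z_k$. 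Because $Y$ and $Z$ have finite support, the map $z\mapsto Y(z)$ is a finite sum of entire $\ell^1(X;B)$-valued functions, whence $z\mapsto AY(z)$ is entire and the scalar function $F(z)=\langle AY(z),Z(z)\rangle=\sum_{k}\langle (AY(z))_k,Z_k(z)\rangle$ (the sum running over the finite support of $Z$) is a finite sum of entire functions; the bound $|c(z)|=\Vert\cdot\Vert^{\,c\operatorname{Re}z}$ on the magnitudes shows $F$ is moreover bounded on $\overline S$, as the three-lines lemma requires.

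It remains to estimate $F$ on the two boundary lines. On $\operatorname{Re}z=0$ one computes $\Vert Y(it)\Vert_{\ell^1}=\Vert Y\Vert_p^p=1$ and $\Vert Z(it)\Vert_{\ell^\infty}=1$, whence $|F(it)|\le\Vert A\Vert_{\mathcal B(\ell^1(X;B))}$; symmetrically, on $\operatorname{Re}z=1$ one finds $\Vert Y(1+it)\Vert_{\ell^\infty}=1$ and $\Vert Z(1+it)\Vert_{\ell^1}=\Vert Z\Vert_q^q=1$, giving $|F(1+it)|\le\Vert A\Vert_{\mathcal B(\ell^\infty(X;B))}$. The Hadamard three-lines lemma then yields $|\langle AY,Z\rangle|=|F(\theta)|\le\Vert A\Vert_{\mathcal B(\ell^1(X;B))}^{1-\theta}\Vert A\Vert_{\mathcal B(\ell^\infty(X;B))}^{\theta}\le M$. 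Taking the supremum over admissible $Z$ gives $\Vert AY\Vert_p\le M$ for all finitely supported $Y$ with $\Vert Y\Vert_p=1$, and extending by density produces the bounded extension $A\in\mathcal B(\ell^p(X;B))$ with the claimed norm estimate.

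I expect the main obstacle to be precisely the vector-valued nature of the interpolation. In the scalar Riesz--Thorin proof one continues $|f|^{z}\operatorname{sgn}f$, and the naive analogue would require raising the Banach-space-valued coordinates to complex powers, which is meaningless. The resolution — and the reason the statement holds for an \emph{arbitrary} Banach space $B$ with no geometric hypothesis — is that the two endpoints share the same fiber $B$ and differ only in the summability exponent, so that only the scalar magnitudes $\Vert Y_k\Vert_B$ and $\Vert Z_k\Vert_{B^*}$ need be analytically continued while the directions $u_k,v_k$ stay fixed. Beyond this observation the estimates are bookkeeping (including the harmless convention on vanishing coordinates), and in practice the cleanest route is simply to invoke \cite[Theorem~2.2.1]{HyNeVeWe16}.
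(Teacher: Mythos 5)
Your proposal is correct; note, however, that the paper contains no proof of Proposition~\ref{Riesz-Thorin} at all -- it is stated purely as a special case of \cite[Theorem 2.2.1]{HyNeVeWe16} -- so any comparison is between your self-contained argument and a bare citation. Your argument supplies two things the citation leaves implicit. First, the weight-removal step via the diagonal map $D_\nu$ is genuinely needed and is the right move: with the paper's convention (weight inside the norm, $\Vert (Y_k) \Vert_{\ell^p_\nu} = \Vert (\nu_k \Vert Y_k\Vert_B) \Vert_{\ell^p}$), the space $\ell^\infty_\nu(X;B)$ is not the $L^\infty$-space of any measure on $X$, so a Riesz--Thorin theorem stated for Bochner spaces $L^p(\mu;B)$ over a fixed measure space does not apply verbatim to the weighted scale; conjugating by the single map $D_\nu$, which is simultaneously isometric for every $p\in[1,\infty]$, reduces everything to the unweighted case in one stroke. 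Second, your three-lines argument is the standard and correct vector-valued adaptation of Thorin's proof: freezing the unit directions $u_k, v_k$ and analytically continuing only the scalar magnitudes is precisely what makes the argument work for an arbitrary Banach space $B$ without geometric hypotheses, your choice of exponents ($p(1-z)$ and $qz$ with $\theta = 1 - 1/p$) is consistent, and the boundary computations $\Vert Y(it)\Vert_{\ell^1} = \Vert Y \Vert_p^p = 1$, $\Vert Z(it)\Vert_{\ell^\infty} = 1$ (and symmetrically on $\operatorname{Re} z = 1$) are right; since $Y$ and $Z$ are finitely supported, $F$ is even an explicit finite sum of entire scalar functions, so analyticity and boundedness on the strip are immediate. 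The only point I would ask you to spell out is the compatibility issue at the end: the hypothesis presupposes one operator $A$ acting consistently on $\ell^1_\nu$ and $\ell^\infty_\nu$ (in this paper, a $\mathcal{B}(\mathcal{H})$-valued matrix), and your density argument produces a bounded extension from the finitely supported sequences to $\ell^p_\nu(X;B)$; one should add the routine sentence verifying that this extension agrees with the given action of $A$ (e.g.\ with the matrix action defined in (\ref{def-Of})), which is how the proposition is used throughout the paper.
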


\subsection{The Banach algebra \text{$\mathcal{B}(\ell^2(X; \Hil))$}}\label{Operator valued matrices}

Of special interest to us in this article is the Hilbert space $\ell^2(X; \Hil)$ and the Banach algebra $\B(\ell^2(X; \Hil))$ of bounded operators acting on it.
%
%which is a Hilbert space with respect to the inner product 
%\begin{equation}\label{innerproduct2}
%\langle ( f_k )_{k\in X}, ( g_k )_{k\in X} \rangle _{\ell^2(X; \Hil)} := \sum_{k\in X} \langle f_k ,g_k \rangle_{\Hil} .
%\end{equation}

Assume that for each $k,l\in X$ we are given a bounded operator $A_{k,l} \in \B(\Hil)$. Then   
\begin{equation}\label{def-Of}
A( f_l )_{l\in X} := \Big( \sum_{l \in X} A_{k,l} f_l \Big)_{k\in X}
\end{equation}
defines a linear operator $A: \D(A) \subseteq \ell^2(X; \Hil) \longrightarrow \ell^2(X; \Hil)$, where 
\begin{equation}\label{domnew}
    \D(A) = \left\lbrace (f_l)_{l\in X} \in \ell^2(X; \Hil) : \sum_{k \in X} \bigg\Vert \sum_{l \in X} A_{k,l} f_l \bigg\Vert^2 < \infty \right\rbrace .
\end{equation}
If we view elements $(f_l)_{l\in X}$ from $\ell^2(X; \Hil)$ as (possibly infinite) column vectors, then we can represent the operator $A$ by the matrix
\begin{equation}\label{matrix}
A = \begin{bmatrix} 
    \ddots & \vdots & \vdots & \, \\
    \dots & A_{k,l} & A_{k,l+1} & \dots \\
    \dots & A_{k+1,l} & A_{k+1, l+1} & \dots \\
    \, & \vdots & \vdots & \ddots & \\ 
    \end{bmatrix} ,
\end{equation}
because (\ref{def-Of}) precisely corresponds to the formal matrix multiplication of $A$ with $(f_l)_{l\in X}$. 

In fact, one can show that there is a one-to-one correspondence between bounded operators $A\in \mathcal{B}(\ell^2(X; \Hil))$ and $\B(\Hil)$-valued matrices $[A_{k,l}]_{k,l\in X}$ which satisfy a certain domain condition. This fact was mentioned in \cite{Maddox:101881}, for the proof details we refer to \cite{kohl21,koebacasheihomosha23}. More precisely:

\begin{proposition}\label{LemmaBounded2}\cite{Maddox:101881,kohl21,koebacasheihomosha23}
For every bounded operator $A \in \mathcal{B}(\ell^2(X; \Hil))$ there exists a unique $\B(\Hil)$-valued matrix $[A_{k,l}]_{k,l \in X}$ such that the action of $A$ on any $(f_l)_{l \in X} \in \ell^2(X; \Hil)$ is given by (\ref{def-Of}). Conversely, if $A = [A_{k,l}]_{k,l\in X}$ is $\B(\Hil)$-valued matrix satisfying $\D(A) = \ell^2(X; \Hil)$, where $\D(A)$ is defined as in (\ref{domnew}), then $A\in \mathcal{B}(\ell^2(X; \Hil))$. 
\end{proposition}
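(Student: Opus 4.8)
The plan is to establish the two directions of the correspondence separately, with the bulk of the work going into extracting and identifying the matrix entries from a given bounded operator. For the \emph{converse} direction, which is the easier one, suppose $A = [A_{k,l}]_{k,l\in X}$ is a $\B(\Hil)$-valued matrix with $\D(A) = \ell^2(X;\Hil)$. The formula (\ref{def-Of}) then defines $A$ as an everywhere-defined linear operator on $\ell^2(X;\Hil)$, and the standard argument is to invoke the Closed Graph Theorem: I would show that $A$ is closed by taking a sequence $(f^{(n)})_n$ with $f^{(n)} \to f$ and $Af^{(n)} \to g$ in $\ell^2(X;\Hil)$, and checking componentwise (using the boundedness of each $A_{k,l}\in\B(\Hil)$ and the continuity of the coordinate projections $\ell^2(X;\Hil)\to\Hil$) that $g = Af$. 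Since $A$ is everywhere-defined and closed on a Banach space, it is bounded, giving $A \in \B(\ell^2(X;\Hil))$.

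For the \emph{forward} direction, given $A\in \B(\ell^2(X;\Hil))$ I would construct the entries explicitly. For each $l\in X$ and each $h\in\Hil$, let $\delta_l \otimes h \in \ell^2(X;\Hil)$ denote the sequence whose $l$-th coordinate is $h$ and all others are $0$; these are genuine elements of $\ell^2(X;\Hil)$. Then I would define $A_{k,l}h := (A(\delta_l\otimes h))_k \in \Hil$, i.e. the $k$-th coordinate of the image. Linearity of $A_{k,l}$ in $h$ is immediate, and boundedness follows from the chain $\Vert A_{k,l}h\Vert \leq \Vert A(\delta_l\otimes h)\Vert_{\ell^2(X;\Hil)} \leq \Vert A\Vert\,\Vert h\Vert$, using that the $k$-th coordinate norm is dominated by the full $\ell^2$-norm. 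This produces a well-defined family $A_{k,l}\in\B(\Hil)$. It then remains to verify that $A$ acts by the matrix formula (\ref{def-Of}): writing an arbitrary $(f_l)_{l\in X}\in\ell^2(X;\Hil)$ as a limit of its finite truncations $\sum_{l\in F} \delta_l\otimes f_l$ (over finite $F\subset X$), the continuity of $A$ and of the coordinate projections lets me pass to the limit and obtain $(Af)_k = \sum_{l\in X} A_{k,l}f_l$ for every $k$. Uniqueness is then clear, since the entries are recovered from $A$ by the formula $A_{k,l}h = (A(\delta_l\otimes h))_k$.

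The main technical obstacle I anticipate is the justification of the limit interchange in the forward direction: one must confirm that the finite truncations converge to $(f_l)_{l\in X}$ in the norm of $\ell^2(X;\Hil)$, and that applying $A$ followed by the $k$-th coordinate projection commutes with this limit. Both steps rest on the continuity of $A$ together with the elementary fact that coordinate evaluation $\ell^2(X;\Hil)\to\Hil$, $(g_l)_l \mapsto g_k$, is a norm-decreasing (hence bounded) map; these are routine but must be stated carefully to ensure the series $\sum_{l\in X} A_{k,l}f_l$ converges in $\Hil$ for every $k$. Since the full statement is already attributed to \cite{Maddox:101881,kohl21,koebacasheihomosha23}, I would keep this argument brief and refer to those sources for the complete details.
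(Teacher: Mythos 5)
The paper itself does not prove Proposition \ref{LemmaBounded2}; it defers entirely to \cite{Maddox:101881,kohl21,koebacasheihomosha23}, so your argument has to be judged on its own merits. Your forward direction is correct and complete: defining $A_{k,l}h := (A(\delta_l\otimes h))_k$ gives entries with $\Vert A_{k,l}\Vert \leq \Vert A\Vert$, the finite truncations $\sum_{l\in F}\delta_l\otimes f_l$ converge to $f$ in $\ell^2(X;\Hil)$, and continuity of $A$ together with the norm-decreasing coordinate projections yields (\ref{def-Of}) and uniqueness.

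The converse direction, however, has a genuine gap in the closedness verification. From $f^{(n)}\to f$ and $Af^{(n)}\to g$ you obtain, for each $k$, that $\sum_{l} A_{k,l}f^{(n)}_l \to g_k$, and you want to conclude that this limit equals $\sum_l A_{k,l}f_l$. The discrepancy is $\sum_l A_{k,l}\bigl(f_l - f^{(n)}_l\bigr)$, i.e. the $k$-th \emph{row operator} $R_k:\ell^2(X;\Hil)\to\Hil$, $R_k f := \sum_l A_{k,l}f_l$, applied to $f-f^{(n)}$. Boundedness of the individual entries $A_{k,l}$ and continuity of the coordinate projections only give the termwise convergence $A_{k,l}f^{(n)}_l \to A_{k,l}f_l$; they do not let you interchange $\lim_{n}$ with the infinite sum over $l$. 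What is really needed is continuity of $R_k$ itself, and establishing it is the technical heart of the converse (the vector-valued analogue of Landau's theorem that a matrix row acting on all of $\ell^2$ must act boundedly). It can be supplied as follows: the partial-sum operators $R_k^F f := \sum_{l\in F} A_{k,l}f_l$ over finite $F\subset X$ are bounded; for each fixed $f$ the full-domain hypothesis makes $\sum_{l\in X} A_{k,l}f_l$ an unconditionally convergent series, so the set of its finite partial sums is bounded, i.e. $\sup_F \Vert R_k^F f\Vert < \infty$; the uniform boundedness principle then gives $\sup_F\Vert R_k^F\Vert<\infty$, hence $R_k$ is bounded. With row continuity in hand, your componentwise check and the closed graph theorem go through exactly as you describe; without it, the step ``checking componentwise that $g=Af$'' does not stand as written.
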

The above proposition motivates the notation 
$$\mathbb{M}(A) = [A_{k,l}]_{k,l\in X}$$
and we call $\mathbb{M}(A)$ the \emph{canonical
matrix representation} of $A$ \cite{koebacasheihomosha23}. 

\begin{remark}\label{involutioncomposition}
Composing bounded operators $A$ and $B$ from $\B(\ell^2(X; \Hil))$ corresponds to matrix multiplication of their corresponding canonical matrix representations. In other words, if $A, B \in \mathcal{B}(\ell^2(X; \Hil))$, then $\mathbb{M}(AB) = \mathbb{M}(A)\cdot \mathbb{M}(B)$, where each entry $[AB]_{k,l} = \sum_{n\in X} A_{k,n} B_{n,l}$ of $\mathbb{M}(AB)$ converges in the operator norm topology with respect to $\B(\Hil)$ \cite{Maddox:101881,kohl21}. In particular, this means that $\mathcal{B}(\ell^2(X; \Hil))$ forms a Banach algebra of $\B(\Hil)$-valued matrices.
\end{remark}

The result below on the canonical matrix representation of the adjoint $A^*$ of $A\in \B(\ell^2(X; \Hil))$ was mentioned in \cite{Maddox:101881}; its proof details can be found in \cite{kohl21}.

\begin{proposition}\label{adjointmatrix}\cite{Maddox:101881,kohl21}
Let $A \in \mathcal{B}(\ell^2(X; \Hil))$ and $\mathbb{M}(A) = [A_{k,l}]_{k,l\in X}$ be its canonical matrix representation. Then the canonical matrix representation of the adjoint operator $A^* \in \mathcal{B}(\ell^2(X; \Hil))$ is given by the relation  
\begin{equation}\label{involution}
\mathbb{M}(A^*) = [A_{k,l}^*]_{k,l\in X}^t
\end{equation}
where the exponent $t$ denotes transposition.
\end{proposition}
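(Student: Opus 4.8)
The plan is to recover the matrix entries of any bounded operator through a pairing with elementary vectors, and then to transport the defining relation of the Hilbert-space adjoint into a relation between these entries. For $l \in X$ and $\xi \in \Hil$, let $\delta_l \otimes \xi \in \ell^2(X; \Hil)$ denote the sequence whose $l$-th component equals $\xi$ and whose remaining components vanish; such a sequence has finite support and hence lies in $\ell^2(X; \Hil)$. Feeding $\delta_l \otimes \xi$ into the defining formula (\ref{def-Of}) collapses the inner sum to a single term, so that the $k$-th component of $A(\delta_l \otimes \xi)$ is exactly $A_{k,l}\xi$. Pairing with another elementary vector $\delta_k \otimes \eta$ in the Hilbert space $\ell^2(X; \Hil)$ then yields the extraction identity
\begin{equation*}
\langle A(\delta_l \otimes \xi), \delta_k \otimes \eta \rangle_{\ell^2(X; \Hil)} = \langle A_{k,l}\xi, \eta \rangle_{\Hil} \qquad (\forall \xi, \eta \in \Hil),
\end{equation*}
which characterizes each entry $A_{k,l}$ uniquely.

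First I would record that $A^*$ is again a bounded operator on the Hilbert space $\ell^2(X; \Hil)$, so by Proposition \ref{LemmaBounded2} it possesses a canonical matrix representation $\mathbb{M}(A^*) = [B_{k,l}]_{k,l\in X}$; the task is thus reduced to identifying $B_{k,l}$ with $A_{l,k}^*$. Applying the extraction identity above to $A^*$ in place of $A$ gives $\langle A^*(\delta_k \otimes \eta), \delta_l \otimes \xi \rangle = \langle B_{l,k}\eta, \xi \rangle_{\Hil}$.

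Next I would invoke the defining property of the adjoint, $\langle A u, w \rangle = \langle u, A^* w \rangle$, with $u = \delta_l \otimes \xi$ and $w = \delta_k \otimes \eta$. Taking complex conjugates and using conjugate symmetry of the inner products together with the two extraction identities, one obtains
\begin{equation*}
\langle A_{k,l}\xi, \eta \rangle_{\Hil} = \langle \xi, B_{l,k}\eta \rangle_{\Hil} = \langle B_{l,k}^*\xi, \eta \rangle_{\Hil} \qquad (\forall \xi, \eta \in \Hil).
\end{equation*}
Since $\xi, \eta \in \Hil$ are arbitrary, this forces $A_{k,l} = B_{l,k}^*$, equivalently $B_{k,l} = A_{l,k}^*$, which is precisely the asserted relation $\mathbb{M}(A^*) = [A_{k,l}^*]_{k,l\in X}^t$.

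The computation itself is routine; the only point requiring care is the bookkeeping of indices under transposition together with the conjugate-linearity of the adjoint in the pairing, so that the roles of $k$ and $l$ and the order of the operator and its adjoint are not inadvertently swapped. The main structural obstacle $-$ guaranteeing that $A^*$ even admits entries to compare against $-$ is already resolved by Proposition \ref{LemmaBounded2}, which supplies the canonical matrix representation of the bounded operator $A^*$; everything else follows from the uniqueness of that representation and the non-degeneracy of the inner product on $\Hil$.
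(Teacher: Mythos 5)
Your proof is correct. The paper itself does not include an argument for this proposition: it defers the proof details to the cited references \cite{Maddox:101881,kohl21}, so there is nothing internal to compare against; your argument (extracting entries by testing on finitely supported sequences $\delta_l \otimes \xi$, invoking Proposition \ref{LemmaBounded2} to know $\mathbb{M}(A^*)$ exists, and then using the defining property of the adjoint together with non-degeneracy of the inner product on $\Hil$) is exactly the standard route one would expect those references to take, and your index bookkeeping $B_{k,l} = A_{l,k}^*$ matches the claimed transposition relation precisely.
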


%%%%%%%%%%%%%%%%%%%%%%%%%%%%%%%%%%%%%%%%%%%%%%%%%%%%%%%

\section{Weighted Schur-type algebras}\label{The Schur algebra}

Let $X\subseteq \mathbb{R}^d$ be a relatively separated and $\nu$ be a weight. Then for $1\leq p < \infty$ we define $\MS_{\nu}^p = \MS_{\nu}^p (X)$ to be the space of all $\B(\Hil)$-valued matrices $A=[A_{k,l}]_{k,l\in X}$ for  which  
\begin{equation}\label{Schurnorm}
    \max \left\lbrace \sup_{k\in X} \left( \sum_{l\in X} \Vert A_{k,l} \Vert^p \nu(k-l)^p \right)^{\frac{1}{p}}, \sup_{l\in X} \left( \sum_{k\in X} \Vert A_{k,l} \Vert^p\nu(k-l)^p \right)^{\frac{1}{p}} \right\rbrace
\end{equation}
is finite. For $\nu \equiv 1$ we abbreviate $\MS^p := \MS_{1}^p$. 

It is easy to see that (\ref{Schurnorm}) defines a norm on $\MS_{\nu}^p$. In fact, $\MS_{\nu}^p$ is complete with respect to this norm, as shown below. 

\begin{proposition}\label{completenessproof}
For every $1\leq p < \infty$, $(\MS_{\nu}^p, \Vert \, . \, \Vert_{\MS_{\nu}^p})$ is a Banach space.
\end{proposition}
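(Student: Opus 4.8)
The plan is to show completeness by the standard argument for sequence-type spaces: take a Cauchy sequence in $\MS_\nu^p$, identify a candidate limit entrywise, and then verify that the limit lies in $\MS_\nu^p$ and that convergence holds in the $\MS_\nu^p$-norm. The key structural observation is that the $\MS_\nu^p$-norm dominates each entry: since $\nu$ is positive, for any matrix $A=[A_{k,l}]$ one has $\Vert A_{k,l}\Vert \,\nu(k-l) \leq \Vert A\Vert_{\MS_\nu^p}$ for every fixed pair $k,l\in X$ (the single term is bounded by the full $\ell^p$-sum). Hence convergence in $\MS_\nu^p$ forces entrywise convergence in the operator norm of $\B(\Hil)$, which is where the limit candidate comes from.

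Concretely, let $(A^{(m)})_{m\in\NN}$ be Cauchy in $\MS_\nu^p$. First I would fix $k,l$ and observe from the entrywise domination that $(A^{(m)}_{k,l})_m$ is Cauchy in $\B(\Hil)$; since $\B(\Hil)$ is a Banach space, it converges to some $A_{k,l}\in\B(\Hil)$. Set $A:=[A_{k,l}]_{k,l\in X}$. Next I would show $A\in\MS_\nu^p$ and $A^{(m)}\to A$ in norm. The clean way to do both at once is a Fatou-type argument: for a fixed row index $k$ and any finite subset $F\subset X$, the finite sum $\big(\sum_{l\in F}\Vert A^{(m)}_{k,l}-A^{(m')}_{k,l}\Vert^p\nu(k-l)^p\big)^{1/p}$ is bounded by $\Vert A^{(m)}-A^{(m')}\Vert_{\MS_\nu^p}\le\varepsilon$ for $m,m'$ large; letting $m'\to\infty$ uses entrywise convergence and continuity of the operator norm to get $\big(\sum_{l\in F}\Vert A^{(m)}_{k,l}-A_{k,l}\Vert^p\nu(k-l)^p\big)^{1/p}\le\varepsilon$; then taking the supremum over finite $F$ (hence the full sum over $l$) and over $k$ controls the row part of $\Vert A^{(m)}-A\Vert_{\MS_\nu^p}$. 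The identical estimate applied to columns controls the column part.

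Combining the two gives $\Vert A^{(m)}-A\Vert_{\MS_\nu^p}\le\varepsilon$ for all large $m$, which simultaneously shows $A^{(m)}-A\in\MS_\nu^p$ — whence $A=A^{(m)}-(A^{(m)}-A)\in\MS_\nu^p$ since $\MS_\nu^p$ is a vector space — and that $A^{(m)}\to A$ in $\MS_\nu^p$. I would note that the max over the two $\ell^p$-sums in \eqref{Schurnorm} causes no difficulty: one simply runs the Fatou argument separately for the row supremum and the column supremum and takes the maximum at the end.

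The main obstacle is the interchange of limits hidden in passing from finite subsets $F$ to the full index set $X$ while letting $m'\to\infty$. One must be careful to take the limit in $m'$ first on a \emph{finite} sum (where entrywise operator-norm convergence immediately yields convergence of the sum), and only afterwards take the supremum over finite $F$; attempting to pass $m'\to\infty$ inside an infinite sum directly would require a dominated-convergence justification that the finite-subset truncation neatly avoids. This truncation step, though routine, is the crux that guarantees the limiting matrix inherits the uniform-in-$k$ (and uniform-in-$l$) summability defining membership in $\MS_\nu^p$.
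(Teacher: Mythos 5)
Your proof is correct, and it takes a genuinely different route from the paper's. The paper, after extracting the same entrywise limit $A=[A_{k,l}]_{k,l\in X}$, packages the weighted rows $\big(A^{(n)}_{k,l}\nu(k-l)\big)_{l\in X}$ and columns $\big(A^{(n)}_{k,l}\nu(k-l)\big)_{k\in X}$ into elements of the iterated Bochner space $\ell^{\infty}(X;\ell^p(X;\B(\Hil)))$, invokes the completeness of that space (cited from the Bochner-space background in Section 2), and then spends a step identifying the resulting abstract limits with the entrywise limit $A$ before reading off norm convergence. You instead run the classical Fatou/finite-truncation argument directly on the Cauchy estimates: pass to the limit $m'\to\infty$ on finite sub-sums (where entrywise operator-norm convergence suffices), then recover the full sums as suprema over finite subsets, separately for rows and columns. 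Your version is more elementary and self-contained --- it needs no auxiliary space and no limit-identification bookkeeping, and you correctly isolate the one delicate point, namely that the $m'$-limit must be taken \emph{before} passing from finite subsets to all of $X$. What the paper's route buys is reuse of machinery: it leans on the completeness of Bochner sequence spaces already introduced for other purposes (e.g.\ $\MS_{\nu}^{\infty}$ and $\J_s$ are literally such spaces), so the analytic core is outsourced to a cited result rather than reproved by hand. Both proofs establish membership of the limit the same way, via $A = A^{(n)} - (A^{(n)}-A)$.
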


\begin{proof}
Let $\lbrace A^{(n)} \rbrace_{n=1}^{\infty}$ be a Cauchy sequence in $\MS_{\nu}^p$, i.e. for any given $\varepsilon > 0$, there exists $N = N(\varepsilon) \in \mathbb{N}$ such that for all $m,n \geq N$ 
\begin{equation}\label{Cauchy1}
    \sup_{k\in X} \left( \sum_{l\in X} \Vert A^{(m)}_{k,l}-A^{(n)}_{k,l} \Vert^p \nu(k-l)^p \right)^{\frac{1}{p}} < \varepsilon
\end{equation}
and
\begin{equation}\label{Cauchy2}
\sup_{l\in X} \left( \sum_{k\in X} \Vert A^{(m)}_{k,l}-A^{(n)}_{k,l} \Vert^p \nu(k-l)^p \right)^{\frac{1}{p}} < \varepsilon .
\end{equation}

\emph{Step 1.} Relations (\ref{Cauchy1}) and (\ref{Cauchy2}) imply that for each index pair $(k,l)\in X\times X$, $\lbrace A^{(n)}_{k,l} \rbrace_{n=1}^{\infty}$ is a Cauchy sequence in $\B(\Hil)$ and thus converges to some $A_{k,l} \in \B(\Hil)$. Set $A =  [A_{k,l}]_{k,l\in X}$. We will show that $A^{(n)} \rightarrow A$ in $\MS_{\nu}^p$ and that $A\in \MS_{\nu}^p$.

\emph{Step 2.} For each $n\in \mathbb{N}$, set 
\begin{flalign}
    &\tilde{A}^{(n)} := \big(\tilde{A}^{(n)}_k \big)_{k\in X}, \quad \text{where} \quad \tilde{A}^{(n)}_k = \big(A^{(n)}_{k,l} \nu(k-l)\big)_{l\in X} \in \ell^p(X; \B(\Hil)) \notag \\
    &\tilde{\tilde{A}}^{(n)} := \big(\tilde{\tilde{A}}^{(n)}_l \big)_{l\in X}, \quad \text{where} \quad \tilde{\tilde{A}}^{(n)}_l = \big(A^{(n)}_{k,l} \nu(k-l)\big)_{k\in X} \in \ell^p(X; \B(\Hil)). \notag
\end{flalign}
Then, relation (\ref{Cauchy1}) implies that $\lbrace \tilde{A}^{(n)}\rbrace_{n=1}^{\infty}$ is a Cauchy sequence in the Banach space $\ell^{\infty}(X; \ell^p(X;\B(\Hil)))$ and thus converges to some $\tilde{A} = \big(\tilde{A}_k \big)_{k\in X} \in \ell^{\infty}(X; \ell^p(X;\B(\Hil)))$. Analogously, (\ref{Cauchy2}) implies that $\lbrace \tilde{\tilde{A}}^{(n)}\rbrace_{n=1}^{\infty}$ is a Cauchy sequence in  $\ell^{\infty}(X; \ell^p(X;\B(\Hil)))$ and thus converges to some $\tilde{\tilde{A}} = \big(\tilde{\tilde{A}}_l \big)_{l\in X} \in \ell^{\infty}(X; \ell^p(X;\B(\Hil)))$.

\emph{Step 3.} We denote 
\begin{flalign}
&\tilde{A}_k = \big(\tilde{A}_{k,l} \nu(k-l)\big)_{l\in X} \quad \text{for each } k\in X \notag \\
&\tilde{\tilde{A}}_l = \big(\tilde{\tilde{A}}_{k,l} \nu(k-l)\big)_{k\in X} \quad \text{for each } l\in X. \notag 
\end{flalign}
Then, since $\tilde{A}^{(n)} \rightarrow \tilde{A}$ in $\ell^{\infty}(X; \ell^p(X;\B(\Hil)))$, we obtain that $\tilde{A}^{(n)}_k = \big(A^{(n)}_{k,l} \nu(k-l)\big)_{l\in X} \rightarrow \tilde{A}_k = \big(\tilde{A}_{k,l} \nu(k-l)\big)_{l\in X}$ in $\ell^p(X; \B(\Hil))$ for every $k\in X$. Similarly, we see that $\tilde{\tilde{A}}^{(n)}_l = \big(A^{(n)}_{k,l} \nu(k-l)\big)_{k\in X} \rightarrow \tilde{\tilde{A}}_l = \big(\tilde{\tilde{A}}_{k,l} \nu(k-l)\big)_{k\in X}$ in $\ell^p(X; \B(\Hil))$ for every $l\in X$. The latter two conclusions imply that $[\tilde{A}^{(n)}_k]_l = A^{(n)}_{k,l} \nu(k-l) \rightarrow [\tilde{A}_k]_l = \tilde{A}_{k,l} \nu(k-l)$ in $\B(\Hil)$ and that $[\tilde{\tilde{A}}^{(n)}_l]_k = A^{(n)}_{k,l} \nu(k-l) \rightarrow [\tilde{\tilde{A}}_l]_k = \tilde{\tilde{A}}_{k,l} \nu(k-l)$ in $\B(\Hil)$ for each $(k,l) \in X \times X$. However, by Step 1 we have $A^{(n)}_{k,l} \rightarrow A_{k,l}$ in $\B(\Hil)$, hence we must have  $\tilde{A}_{k,l} = \tilde{\tilde{A}}_{k,l} = A_{k,l}$ for all $k,l\in X$. In particular, this implies that $\tilde{A}_k = (A_{k,l} \nu(k-l))_{l\in X}$ for each $k\in X$, and $\tilde{\tilde{A}}_l = (A_{k,l} \nu(k-l))_{k\in X}$ for each $l\in X$. 

\emph{Step 4.} Let $\varepsilon' >0$ be arbitrary. By Step 2, $\tilde{A}^{(n)} \rightarrow \tilde{A}$ in $\ell^{\infty}(X; \ell^p(X;\B(\Hil)))$. Combined with the observations made in Step 3, we know that there exists some $N' = N'(\varepsilon') \in \mathbb{N}$ such that for all $n\geq N'$
\begin{equation}\label{convergence1}
    \sup_{k\in X} \left( \sum_{l\in X} \Vert A^{(n)}_{k,l}-A_{k,l} \Vert^p \nu(k-l)^p \right)^{\frac{1}{p}} < \varepsilon' .
\end{equation}
Similarly, $\tilde{\tilde{A}}^{(n)} \rightarrow \tilde{A}$ in $\ell^{\infty}(X; \ell^p(X;\B(\Hil)))$ implies that there exists some $N'' = N''(\varepsilon') \in \mathbb{N}$ such that for all $n\geq N''$
\begin{equation}\label{convergence2}
\sup_{l\in X} \left( \sum_{k\in X} \Vert A^{(n)}_{k,l}-A_{k,l} \Vert^p \nu(k-l)^p \right)^{\frac{1}{p}} < \varepsilon' .
\end{equation}
Combining (\ref{convergence1}) with (\ref{convergence2}) yields that for all $n\geq \max \lbrace N', N'' \rbrace$ 
\begin{equation}\label{convergence3}
    \Vert A^{(n)} - A \Vert_{\MS_{\nu}^p} < \varepsilon' .
\end{equation}
Hence $\lbrace A^{(n)} \rbrace_{n=1}^{\infty}$ converges in $\MS_{\nu}^p$ to $A$.

\emph{Step 5.} By relation (\ref{convergence3}) we know that the $\B(\Hil)$-valued matrix $A - A^{(n)}$ belongs to $\MS_{\nu}^p$ for sufficiently large $n$. This implies that $A = (A - A^{(n)}) + A^{(n)} \in \MS_{\nu}^p$ and the proof is complete.
\end{proof}

\begin{remark}
The definition of $\MS_{\nu}^p$ canonically extends to the case $p=\infty$. In fact, we set $\MS_{\nu}^{\infty} = \ell_u^{\infty}(X\times X; \B(\Hil))$, where $u(k,l) = \nu(k-l)$. In this case the completeness of $\MS_{\nu}^{\infty}$ follows immediately from the completeness of Bochner sequence spaces. In Section \ref{Off-diagonal decay conditions via admissible weights} these spaces will be denoted by $\J_{\nu}$ and are discussed in more detail. In the case $\nu(x) = \nu_s(x) = (1+\vert x\vert)^s$ ($x\in \mathbb{R}^d$) of a polynomial weight, $\MS_{\nu_s}^{\infty} = \mathcal{J}_{\nu_s}$ equals the \emph{Jaffard algebra} (see Section \ref{Off-diagonal decay conditions via admissible weights}), whence the notation $\J_{\nu}$.
\end{remark}

We will now focus on the case $p=1$. In fact, for admissible weights $\nu$ (in the sense of Definiton \ref{charlyadmissibledef}), we will hereinafter prove step by step that $\MS_{\nu}^1 \subseteq \B(\ell^2(X;\Hil))$ is a Wiener pair.

First, we show a generalization of Schur's test \cite{gr01} to $\B(\Hil)$-valued matrices.

\begin{lemma}\label{Schurimpliesbounded}
Let $\nu$ be an admissible weight and $m$ be $\nu$-moderate. Then every $A\in \MS_{\nu}^1$ defines (via (\ref{def-Of})) a bounded operator on $\ell_m^p(X;\Hil)$ for all $1\leq p \leq \infty$ and it holds
$$\Vert A \Vert_{\B(\ell_m^p(X;\Hil))} \leq C \Vert A \Vert_{\MS_{\nu}^1} \qquad (\forall 1\leq p \leq \infty),$$
where $C$ is the constant arising from $m$ being $\nu$-moderate.
\end{lemma}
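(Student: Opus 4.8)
The plan is to reduce everything to the two endpoint exponents $p=1$ and $p=\infty$ and then to recover the range $1<p<\infty$ by Riesz--Thorin interpolation (Proposition \ref{Riesz-Thorin}, applied with sequence weight $m$ and Banach space $B=\Hil$). The whole argument runs in close parallel to the proof of Proposition \ref{propboundedonKp}; the only structural inputs beyond the definition of the $\MS_{\nu}^1$-norm are the triangle inequality in $\Hil$, which gives $\Vert \sum_{l} A_{k,l} g_l \Vert \leq \sum_{l} \Vert A_{k,l} \Vert \, \Vert g_l \Vert$, and the defining inequality of $\nu$-moderateness, which upon setting $x=l$, $x'=k-l$ reads $m(k) \leq C\, m(l)\, \nu(k-l)$ with $C$ the constant from the moderateness hypothesis.

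First I would treat $p=\infty$. Starting from $\Vert Ag \Vert_{\ell_m^{\infty}(X;\Hil)} = \sup_{k} \Vert \sum_{l} A_{k,l} g_l \Vert\, m(k)$, the two ingredients above give the bound $C \sup_{k} \sum_{l} \Vert A_{k,l} \Vert\, \nu(k-l)\, \Vert g_l \Vert\, m(l)$; estimating $\Vert g_l \Vert\, m(l) \leq \Vert g \Vert_{\ell_m^{\infty}(X;\Hil)}$ and recognising the remaining supremum of weighted row sums as being dominated by $\Vert A \Vert_{\MS_{\nu}^1}$ closes this case. The case $p=1$ is dual: from $\Vert Ag \Vert_{\ell_m^{1}(X;\Hil)} = \sum_{k} \Vert \sum_{l} A_{k,l} g_l \Vert\, m(k)$ the same two ingredients yield $C \sum_{k} \sum_{l} \Vert A_{k,l} \Vert\, \nu(k-l)\, \Vert g_l \Vert\, m(l)$, and here I would interchange the order of summation (legitimate by Tonelli, all summands being nonnegative), factor out $\Vert g_l \Vert\, m(l)$, and bound the inner weighted column sum $\sum_{k} \Vert A_{k,l} \Vert\, \nu(k-l)$ by $\Vert A \Vert_{\MS_{\nu}^1}$. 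Both endpoints thus produce the operator-norm estimate $C \Vert A \Vert_{\MS_{\nu}^1}$ with the \emph{same} constant $C$, so Proposition \ref{Riesz-Thorin} delivers the claim for every $1<p<\infty$ with the identical bound.

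The computations are routine, and the closest thing to a genuine obstacle is well-definedness: one must check that each series $\sum_{l} A_{k,l} g_l$ converges in $\Hil$ and that $Ag$ actually lands in $\ell_m^p(X;\Hil)$, since Proposition \ref{LemmaBounded2} only covers the unweighted $\ell^2$ case. This is settled by the absolute convergence $\sum_{l} \Vert A_{k,l} \Vert\, \Vert g_l \Vert < \infty$ that the endpoint estimates produce along the way. Beyond this, the argument exploits no property of the entries $A_{k,l}$ other than their operator norms, so the passage from the scalar weighted Schur test to the $\B(\Hil)$-valued version is cost-free; in particular the admissibility of $\nu$ as such plays no role here (only its appearance in the $\nu$-moderateness of $m$ is used) and will be needed only in the later inverse-closedness statements.
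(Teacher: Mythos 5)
Your proposal is correct and follows essentially the same route as the paper's proof: the same endpoint estimates at $p=1$ (with the Tonelli interchange and column-sum bound) and $p=\infty$ (row-sum bound), both via the triangle inequality and the moderateness inequality $m(k)=m(l+(k-l))\leq C\,m(l)\,\nu(k-l)$, followed by Riesz--Thorin interpolation. Your additional remarks on well-definedness of the series and on admissibility of $\nu$ being inessential here are accurate refinements, not a different argument.
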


\begin{proof}
Let $A\in \MS_{\nu}^1$ be arbitrary. Then
\begin{flalign}\label{opnorm1}
  \Vert A \Vert_{\B(\ell_m^1(X;\Hil))} &= \sup_{\Vert f \Vert_{\ell_m^1(X;\Hil)} = 1} \sum_{k\in X} \left\Vert \sum_{l\in X} A_{k,l} f_l \right\Vert m(l+k-l) \notag \\
  &\leq C \sup_{\Vert f \Vert_{\ell_m^1(X;\Hil)} = 1} \sum_{k\in X} \sum_{l\in X} \Vert A_{k,l} \Vert \Vert f_l \Vert m(l) \nu(k-l) \notag \\
  &= C\sup_{\Vert f \Vert_{\ell_m^1(X;\Hil)} = 1} \sum_{l\in X} \Vert f_l \Vert m(l) \sum_{k\in X} \Vert A_{k,l} \Vert \nu(k-l) \notag \\
  &\leq C \Vert A \Vert_{\MS_{\nu}^1}.
\end{flalign}
Similarly, 
\begin{flalign}\label{opnorm2}
  \Vert A \Vert_{\B(\ell_m^{\infty}(X;\Hil))} &= \sup_{\Vert f \Vert_{\ell_m^{\infty}(X;\Hil)} = 1} \sup_{k\in X} \left\Vert \sum_{l\in X} A_{k,l} f_l \right\Vert m(l+k-l)\notag \\
  &\leq C \sup_{\Vert f \Vert_{\ell_m^{\infty}(X;\Hil)} = 1} \sup_{k\in X} \sum_{l\in X} \Vert A_{k,l} \Vert \Vert f_l \Vert m(l) \nu(k-l) \notag \\
  &\leq C \Vert A \Vert_{\MS_{\nu}^1}.
\end{flalign}
By Riesz-Thorin interpolation (Proposition \ref{Riesz-Thorin}), the statement follows.
\end{proof}

\begin{remark}
In the scalar-valued case, the reverse inequalities of (\ref{opnorm1}) and (\ref{opnorm2}) hold true in case $m=\nu \equiv 1$ (and hence $C=1$). Consequently, in the scalar-valued setting the norm $\Vert A \Vert_{\MS^1}$ is simply the larger of the operator norms of $A$ on $\ell^1(X)$ and $\ell^{\infty}(X)$ respectively, which yields a shortcut to showing the completeness of $\MS_{\nu}^1$ in this case. However, in the $\B(\Hil)$-valued setting this shortcut is no longer justifiable and proving the completeness of $\MS_{\nu}^1$ is significantly more involved (see Proposition \ref{completenessproof}). 
\end{remark}

By Lemma \ref{Schurimpliesbounded}, $\MS_{\nu}^1$ is continuously embedded into $\B(\ell^2(X;\Hil))$. Thus, by the matrix calculus from Subsection \ref{Operator valued matrices}, there is a canonical way to define a multiplication and an involution on $\MS_{\nu}^1$. In fact, the following holds.

\begin{proposition}\label{Schuralgebraproposition}
If $\nu$ is an admissible weight then $\MS_{\nu}^1$ is a unital Banach *-algebra with respect to matrix multiplication and involution as defined in (\ref{involution}).
\end{proposition}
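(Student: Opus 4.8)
The plan is to show that $\MS_{\nu}^1$ is closed under the matrix multiplication and involution inherited from $\B(\ell^2(X;\Hil))$, that these operations are continuous with respect to the $\MS_{\nu}^1$-norm, that the identity matrix belongs to $\MS_{\nu}^1$, and that the involution is isometric. The completeness of $\MS_{\nu}^1$ is already established in Proposition \ref{completenessproof}, and Lemma \ref{Schurimpliesbounded} guarantees that every element of $\MS_{\nu}^1$ is a bounded operator on $\ell^2(X;\Hil)$, so the matrix calculus of Subsection \ref{Operator valued matrices} (Remark \ref{involutioncomposition} and Proposition \ref{adjointmatrix}) applies and tells us that composition corresponds to matrix multiplication and the adjoint to conjugate-transposition of the entries. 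Thus the only substantive points are the norm inequalities.

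First I would verify that $\MS_{\nu}^1$ is a normed algebra, i.e.\ that $\Vert A\cdot B\Vert_{\MS_{\nu}^1}\leq \Vert A\Vert_{\MS_{\nu}^1}\Vert B\Vert_{\MS_{\nu}^1}$. The entries of the product are $[A\cdot B]_{k,l}=\sum_{n\in X}A_{k,n}B_{n,l}$, so $\Vert [A\cdot B]_{k,l}\Vert\leq \sum_{n\in X}\Vert A_{k,n}\Vert\,\Vert B_{n,l}\Vert$. Multiplying by $\nu(k-l)$ and using submultiplicativity of the admissible weight $\nu$, namely $\nu(k-l)\leq \nu(k-n)\nu(n-l)$, gives
$$\Vert [A\cdot B]_{k,l}\Vert \nu(k-l)\leq \sum_{n\in X}\big(\Vert A_{k,n}\Vert\nu(k-n)\big)\big(\Vert B_{n,l}\Vert\nu(n-l)\big).$$
Now I would sum over $l$ and take the supremum over $k$: applying Hölder (or simply the $\ell^1$-$\ell^\infty$ duality) with respect to the inner $n$-sum, and using the defining column-bound of $\Vert B\Vert_{\MS_{\nu}^1}$ together with the row-bound of $\Vert A\Vert_{\MS_{\nu}^1}$, yields $\sup_k\sum_l \Vert[A\cdot B]_{k,l}\Vert\nu(k-l)\leq \Vert A\Vert_{\MS_{\nu}^1}\Vert B\Vert_{\MS_{\nu}^1}$. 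The symmetric computation bounds $\sup_l\sum_k$ by the same product, so taking the maximum gives the submultiplicativity of the $\MS_{\nu}^1$-norm. This is the main computational step and the place where submultiplicativity of $\nu$ is essential.

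Next I would check the remaining algebra axioms. The identity $\mathcal{I}_{\MS_{\nu}^1}=\mathsf{diag}[\mathcal{I}_{\B(\Hil)}]_{k\in X}$ lies in $\MS_{\nu}^1$ with norm $\nu(0)=1$, since only the diagonal entries are nonzero. For the involution, Proposition \ref{adjointmatrix} gives $[A^*]_{k,l}=A_{l,k}^*$, and since $\Vert A_{l,k}^*\Vert=\Vert A_{l,k}\Vert$ while the symmetry of $\nu$ gives $\nu(k-l)=\nu(l-k)$, the two suprema defining $\Vert A^*\Vert_{\MS_{\nu}^1}$ are exactly those defining $\Vert A\Vert_{\MS_{\nu}^1}$ with the roles of the row- and column-indices interchanged. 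Hence $\Vert A^*\Vert_{\MS_{\nu}^1}=\Vert A\Vert_{\MS_{\nu}^1}$, so the involution is an isometry, and the relations $(A^*)^*=A$ and $(A\cdot B)^*=B^*\cdot A^*$ are inherited from the operator calculus. Combined with completeness from Proposition \ref{completenessproof}, this shows that $(\MS_{\nu}^1,\Vert\,.\,\Vert_{\MS_{\nu}^1})$ is a unital Banach $*$-algebra, completing the proof.
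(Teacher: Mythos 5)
Your proposal is correct and follows essentially the same route as the paper: submultiplicativity $\nu(k-l)\leq\nu(k-n)\nu(n-l)$ plus exchanging the $l$- and $n$-sums to get submultiplicativity of the norm, the identity via $\nu(0)=1$, and isometry of the involution via the symmetry of $\nu$, with completeness imported from Proposition \ref{completenessproof}. One cosmetic remark: in your main estimate the quantity $\sup_{n}\sum_{l}\Vert B_{n,l}\Vert\nu(n-l)$ that you invoke is the \emph{row}-bound of $B$ rather than its column-bound, but since $\Vert B\Vert_{\MS_{\nu}^1}$ is the maximum of both suprema the inequality holds exactly as you state it.
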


\begin{proof}
Let $A,B \in \MS_{\nu}^1$. Then, for arbitrary $k\in X$ we have by the submultiplicativity of $\nu$ that
\begin{flalign}
\sum_{l\in X} \Vert [AB]_{k,l} \Vert v(k-l) &\leq \sum_{l\in X} \sum_{n\in X} \Vert A_{k,n} \Vert \Vert B_{n,l} \Vert v(k-n) v(n-l) \notag \\
&= \sum_{n\in X} \Vert A_{k,n} \Vert v(k-n) \sum_{l\in X}  \Vert B_{n,l} \Vert  v(n-l) \notag \\
&\leq \Vert A \Vert_{\MS_{\nu}^1} \Vert B \Vert_{\MS_{\nu}^1}. \notag
\end{flalign}
Consequently, 
$$\sup_{k\in X} \sum_{l\in X} \Vert [AB]_{k,l} \Vert v(k-l) \leq \Vert A \Vert_{\MS_{\nu}^1} \Vert B \Vert_{\MS_{\nu}^1}.$$
Similarly, we see that 
$$\sup_{l\in X} \sum_{k\in X} \Vert [AB]_{k,l} \Vert v(k-l) \leq \Vert A \Vert_{\MS_{\nu}^1} \Vert B \Vert_{\MS_{\nu}^1},$$
hence 
$$\Vert AB \Vert_{\MS_{\nu}^1}  \leq \Vert A \Vert_{\MS_{\nu}^1} \Vert B \Vert_{\MS_{\nu}^1}.$$
Thus, together with Proposition \ref{completenessproof}, we conclude that $\MS_{\nu}^1$ is a Banach algebra. Next, since $\nu$ is a symmetric weight, the definition of the norm (\ref{Schurnorm}) implies that $\Vert A^* \Vert_{\MS_{\nu}^1} = \Vert A \Vert_{\MS_{\nu}^1}$ for all $A\in \MS_{\nu}^1$ (where the involution is defined as in (\ref{involution})). Moreover, since $\nu(0) = 1$, the identity $\mathcal{I}_{\MS_{\nu}^1} = \mathcal{I}_{\B(\ell^2(X;\Hil))} = \text{diag}[\mathcal{I}_{\B(\Hil)}]_{k\in X}$ is contained in $\MS_{\nu}^1$. %Analogously as in the proof of Lemma \ref{lemmaalgebra} (i.e. by the properties of the matrix calculus from Subsection \ref{Operator valued matrices}) we see that the inclusion map $\MS_{\nu}^1 \longrightarrow \B(\ell^2(X;\Hil))$ is an algebra homomorphism.
\end{proof}

If $\nu$ is an admissible weight and if $\nu$ additionally satisfies a certain growth condition, then we can show that $\MS_{\nu}^1 \subseteq \B(\ell^2(X;\Hil))$ forms a Wiener pair. Hulanicki's lemma will be the key tool to verify the inverse-closedness of $\MS_{\nu}^1$ in $\B(\ell^2(X;\Hil))$. In order to derive the desired inequality of spectral radii, we follow ideas from \cite{barnes87} and \cite{groelei06}.

We start with a vector-valued version of part of \cite[Lemma 4.6]{barnes87}. The techniques of the proof are basically the same as in \cite{barnes87}. Nevertheless, we provide the reader with the details adapted to our setting. 

\begin{lemma}\label{lemma46}
Let $\nu (x) = \nu_{\delta}(x) = (1+\vert x \vert)^{\delta}$ for some $\delta \in (0,1]$. Then 
$$r_{\MS^1_{\nu}}(A) = r_{\MS^1}(A) \qquad  (\forall A\in \MS^1_{\nu}).$$
\end{lemma}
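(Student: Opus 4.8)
The plan is to establish the two inequalities $r_{\MS^1}(A) \le r_{\MS_{\nu}^1}(A)$ and $r_{\MS_{\nu}^1}(A) \le r_{\MS^1}(A)$ separately. The first is immediate: since $\nu = \nu_{\delta} \ge 1$ on $\Rd$, every $\MS_{\nu}^1$-norm dominates the corresponding $\MS^1$-norm, so in particular $\Vert A^n \Vert_{\MS^1} \le \Vert A^n\Vert_{\MS_{\nu}^1}$ for all $n$. Both $\MS^1$ and $\MS_{\nu}^1$ are unital Banach *-algebras by Proposition \ref{Schuralgebraproposition} (note that $\nu_{\delta}$ is admissible, and $\nu\equiv 1$ is the trivial admissible weight corresponding to $\MS^1$), so Gelfand's formula (\ref{Gelfand}) yields $r_{\MS^1}(A) = \lim_n \Vert A^n\Vert_{\MS^1}^{1/n} \le \lim_n \Vert A^n\Vert_{\MS_{\nu}^1}^{1/n} = r_{\MS_{\nu}^1}(A)$. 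All the real work lies in the reverse inequality.

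The engine for the reverse inequality is a subadditivity property of the weight that holds precisely because $\delta \le 1$. I would first record that $f(t) = (1+t)^{\delta}$ on $[0,\infty)$ is concave with $f(0) = 1 > 0$, hence subadditive, and by induction $f\big(\sum_{i=1}^N t_i\big) \le \sum_{i=1}^N f(t_i)$. Combined with the monotonicity of $f$ and the triangle inequality applied along a chain $j_0 = k, j_1, \dots, j_N = l$, this produces the crucial splitting $\nu(k-l) \le \sum_{i=0}^{N-1}\nu(j_i - j_{i+1})$.

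The main technical step is then the estimate $\Vert A^n\Vert_{\MS_{\nu}^1} \le n \, \Vert A\Vert_{\MS_{\nu}^1}\,\Vert A\Vert_{\MS^1}^{\,n-1}$ for all $A \in \MS_{\nu}^1$ and $n\ge 1$. To obtain it, I expand $[A^n]_{k,l}$ as a sum over chains $k \to j_1 \to \cdots \to j_{n-1}\to l$, bound $\Vert [A^n]_{k,l}\Vert$ by the corresponding sum of products of entry norms, and insert the chain splitting of $\nu(k-l)$. This rewrites the weighted row sum $\sum_l \Vert [A^n]_{k,l}\Vert \nu(k-l)$ as a sum of $n$ terms, the $i$-th of which is a chain in which exactly one edge, $(j_i, j_{i+1})$, carries the weight $\nu$ while all remaining edges are unweighted. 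Summing out the intermediate indices from the outside in, each unweighted edge contributes an unweighted row sum bounded by $\Vert A\Vert_{\MS^1}$, while the single weighted edge contributes a weighted row sum bounded by $\Vert A\Vert_{\MS_{\nu}^1}$; hence each of the $n$ terms is $\le \Vert A\Vert_{\MS_{\nu}^1}\Vert A\Vert_{\MS^1}^{\,n-1}$. Taking the supremum over $k$, and running the symmetric argument with column sums for the other half of the $\MS_{\nu}^1$-norm, gives the claimed bound. I expect the bookkeeping here — tracking which edge is weighted and verifying that the iterated partial sums over intermediate indices really collapse to powers of $\Vert A\Vert_{\MS^1}$, independently of the one remaining free index — to be the main obstacle; the rest is soft.

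Finally I would convert this norm estimate into the spectral-radius bound. Gelfand's formula applied to the inequality gives $r_{\MS_{\nu}^1}(A) = \lim_n \Vert A^n\Vert_{\MS_{\nu}^1}^{1/n} \le \Vert A\Vert_{\MS^1}$, since $n^{1/n}\to 1$ and $\Vert A\Vert_{\MS_{\nu}^1}^{1/n}\to 1$. This is still too weak, so I would apply it with $A$ replaced by the power $A^m \in \MS_{\nu}^1$: using $r_{\MS_{\nu}^1}(A^m) = r_{\MS_{\nu}^1}(A)^m$ I obtain $r_{\MS_{\nu}^1}(A)^m \le \Vert A^m\Vert_{\MS^1}$, hence $r_{\MS_{\nu}^1}(A) \le \Vert A^m\Vert_{\MS^1}^{1/m}$ for every $m$. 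Letting $m\to\infty$ and invoking Gelfand's formula once more gives $r_{\MS_{\nu}^1}(A) \le r_{\MS^1}(A)$, which together with the first inequality completes the proof.
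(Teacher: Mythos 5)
Your proof is correct, but it reaches the crucial intermediate inequality $r_{\MS^1_{\nu}}(A) \leq \Vert A \Vert_{\MS^1}$ by a genuinely different route than the paper. The paper follows Barnes \cite{barnes87}: it introduces the rescaled weights $\nu_{\varepsilon}(x) = (1+\varepsilon\vert x\vert)^{\delta}$, observes $\nu_{\varepsilon} \leq \nu \leq \varepsilon^{-\delta}\nu_{\varepsilon}$ (so that $r_{\MS^1_{\nu}}(A) \leq r_{\MS^1_{\nu_{\varepsilon}}}(A) \leq \Vert A\Vert_{\MS^1_{\nu_{\varepsilon}}}$), and then uses the elementary inequality $(1+t)^{\delta} \leq 1+t^{\delta}$ to squeeze $\Vert A\Vert_{\MS^1_{\nu_{\varepsilon}}} \leq \Vert A\Vert_{\MS^1} + \varepsilon^{\delta}\Vert A\Vert_{\MS^1_{\nu}}$, so letting $\varepsilon \to 0$ yields the key inequality without ever estimating powers of $A$. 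You instead exploit the subadditivity of $\nu_{\delta}$ (valid precisely because $\delta \leq 1$ makes $t \mapsto (1+t)^{\delta}$ concave) to split the weight along chains of indices and prove the quantitative power estimate $\Vert A^n\Vert_{\MS^1_{\nu}} \leq n\, \Vert A\Vert_{\MS^1_{\nu}}\, \Vert A\Vert_{\MS^1}^{\,n-1}$; your bookkeeping is sound — with one weighted edge per chain, summing out the intermediate indices from the chain's end inward gives bounds uniform in the remaining free index, and the column-sum half is symmetric — and Gelfand's formula then gives the same key inequality. From that point on the two proofs conclude identically (apply the inequality to $A^m$, take $m$-th roots, let $m\to\infty$; your derivation of the easy inequality $r_{\MS^1}(A) \leq r_{\MS^1_{\nu}}(A)$ via $\nu \geq 1$ and norm comparison is a harmless substitute for the paper's appeal to (\ref{spectrum1})). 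What your route buys: an explicit norm estimate for powers, in the spirit of Brandenburg's trick \cite{BRANDENBURG75}, and an argument that works verbatim for \emph{any} subadditive weight $\nu \geq 1$, not just $\nu_{\delta}$. What the paper's route buys: it is entirely soft — only norms of $A$ itself are manipulated, with no combinatorics over chains of matrix indices — at the price of being tailored to the specific family $\nu_{\delta}$.
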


\begin{proof}
For $0<\varepsilon \leq 1$, let $\mu_{\varepsilon}(x) = (1+\varepsilon \vert x \vert)^{\delta}$. Then $\mu_{\varepsilon}$ is a submultiplicative weight. Combining Proposition \ref{completenessproof} with the first part of the proof of Proposition \ref{Schuralgebraproposition} (where only the submultiplicativity of the corresponding weight is used) yields that $\MS^1_{\mu_{\varepsilon}}$ is a Banach algebra. We also note that  
$$\mu_{\varepsilon} \leq \nu \leq \varepsilon^{-\delta} \mu_{\varepsilon} .$$
This yields $\Vert A \Vert_{\MS^1_{\nu}} \leq \varepsilon^{-\delta} \Vert A \Vert_{\MS^1_{\mu_{\varepsilon}}}$ and consequently 
$$\Vert A^n \Vert_{\MS^1_{\nu}}^{\frac{1}{n}} \leq (\varepsilon^{-\delta})^{\frac{1}{n}} \Vert A^n \Vert_{\MS^1_{\mu_{\varepsilon}}}^{\frac{1}{n}} \qquad (\forall A\in \MS^1_{\nu}).$$
Using Gelfand's formula this implies that
\begin{equation}\label{spectralradius2}
r_{\MS^1_{\nu}}(A) \leq r_{\MS^1_{\mu_{\varepsilon}}}(A) \leq \Vert A \Vert_{\MS^1_{\mu_{\varepsilon}}} \qquad (\forall A\in \MS^1_{\nu}).
\end{equation}
Next, let $A\in \MS^1_{\nu}$. Then, since $t^{\delta} \leq (1+t)^{\delta} \leq 1+t^{\delta}$ for $t\geq 0$, we have
\begin{flalign}
    \sup_{k\in X} \sum_{l\in X} \Vert A_{k,l} \Vert &\leq \sup_{k\in X} \sum_{l\in X} \Vert A_{k,l} \Vert (1+\varepsilon \vert k-l \vert)^{\delta} \notag \\
     &\leq \sup_{k\in X} \sum_{l\in X} \Vert A_{k,l} \Vert (1+\varepsilon^{\delta} \vert k-l \vert^{\delta}) \notag \\
     &\leq \sup_{k\in X} \sum_{l\in X} \Vert A_{k,l} \Vert + \varepsilon^{\delta} \sup_{k\in X} \sum_{l\in X} \Vert A_{k,l}\Vert (1+\vert k-l \vert)^{\delta}. \notag 
\end{flalign}
Interchanging the roles of $k$ and $l$, we analogously obtain
\begin{flalign}
    \sup_{l\in X} \sum_{k\in X} \Vert A_{k,l} \Vert &\leq \sup_{l\in X} \sum_{k\in X} \Vert A_{k,l} \Vert (1+\varepsilon \vert k-l \vert)^{\delta} \notag \\
     &\leq \sup_{l\in X} \sum_{k\in X} \Vert A_{k,l} \Vert + \varepsilon^{\delta} \sup_{l\in X} \sum_{k\in X} \Vert A_{k,l}\Vert (1+\vert k-l \vert)^{\delta}, \notag 
\end{flalign}
and consequently
$$\Vert A \Vert_{\MS^1} \leq \Vert A \Vert_{\MS^1_{\mu_{\varepsilon}}} \leq \Vert A \Vert_{\MS^1} + \varepsilon^{\delta} \Vert A \Vert_{\MS^1_{\nu}} \qquad (\forall A \in \MS^1_{\nu}).$$
Combining the latter with the inequalities (\ref{spectralradius2}) gives us
\begin{equation}\label{inequ1}
    r_{\MS^1_{\nu}}(A) \leq \lim_{\varepsilon \rightarrow 0} \Vert A \Vert_{\MS^1_{\mu_{\varepsilon}}} = \Vert A \Vert_{\MS^1} \qquad (\forall A \in \MS^1_{\nu}).
\end{equation}
Finally, the inequality (\ref{inequ1}) implies that $r_{\MS^1_{\nu}}(A)^n =  r_{\MS^1_{\nu}}(A^n) \leq \Vert A^n \Vert_{\MS^1}$, hence $r_{\MS^1_{\nu}}(A) \leq \Vert A^n \Vert_{\MS^1}^{1/n} \rightarrow r_{\MS^1}(A)$ (as $n\rightarrow \infty$). This implies $r_{\MS^1_{\nu}}(A) \leq r_{\MS^1}(A)$ for all $A\in \MS^1_{\nu}$. On the other hand we clearly have $\MS^1_{\nu} \subseteq \MS^1$, hence $r_{\MS^1_{\nu}}(A) \geq r_{\MS^1}(A)$ for all $A\in \MS^1_{\nu}$ as a consequence of (\ref{spectrum1}). This proves the desired equation.
\end{proof}

Our next result is a vector-valued version of \emph{Barnes' Lemma} from \cite[Lemma 5]{groelei06}. Note that additional estimates to the proof ideas from \cite{barnes87} are necessary in order to prove this result in the $\B(\Hil)$-valued setting.

\begin{theorem}\label{thm47}
Let $\nu_{\delta} (x) = (1+\vert x \vert)^{\delta}$ for some $\delta \in (0,1]$. Then
\begin{equation}\label{mainbarns}
r_{\MS^1_{\nu_{\delta}}} (A) = r_{\MS^1}(A) = r_{\B(\ell^2(X;\Hil))}(A) \qquad (\forall A=A^* \in \MS^1_{\nu_{\delta}}).
\end{equation}
In particular, $\MS^1_{\nu_{\delta}}$ is inverse-closed in $\B(\ell^2(X;\Hil))$ and $\MS^1_{\nu_{\delta}}$ is a symmetric Banach algebra.
\end{theorem}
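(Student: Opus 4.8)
The plan is to read (\ref{mainbarns}) as a chain in which the first equality $r_{\MS^1_{\nu_\delta}}(A)=r_{\MS^1}(A)$ is precisely the weight-reduction already established in Lemma \ref{lemma46}, so that the genuinely new content is the middle-to-right equality $r_{\MS^1}(A)=r_{\B(\ell^2(X;\Hil))}(A)$ for self-adjoint $A$. First I would dispose of the easy inequality: since $\MS^1_{\nu_\delta}\subseteq\MS^1\subseteq\B(\ell^2(X;\Hil))$ with common identity (Lemma \ref{Schurimpliesbounded} with $\nu\equiv 1$ gives the embedding into $\B(\ell^2(X;\Hil))$), the spectral inclusion (\ref{spectrum1}) yields $\sigma_{\B(\ell^2(X;\Hil))}(A)\subseteq\sigma_{\MS^1}(A)$ and hence $r_{\B(\ell^2(X;\Hil))}(A)\le r_{\MS^1}(A)$ for every $A$. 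Everything therefore reduces to the reverse bound $r_{\MS^1}(A)\le r_{\B(\ell^2(X;\Hil))}(A)$ for $A=A^{*}$, after which the final two assertions follow formally.

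For this reverse bound I would follow the Barnes/Gröchenig--Leinert strategy from \cite{barnes87,groelei06}, abbreviating $\B:=\B(\ell^2(X;\Hil))$. By Gelfand's formula (\ref{Gelfand}), $r_{\MS^1}(A)=\lim_{n}\Vert A^{2n}\Vert_{\MS^1}^{1/(2n)}$, while self-adjointness gives $\Vert A^n\Vert_{\B}=r_{\B}(A)^{n}$. The concrete mechanism is a self-adjoint doubling: using $[A^n]_{m,l}=\big([A^n]_{l,m}\big)^{*}$ one factors each entry as
$$[A^{2n}]_{k,l}=\sum_{m\in X}[A^n]_{k,m}\,[A^n]_{m,l}=\mathcal{R}_k^{(n)}\big(\mathcal{R}_l^{(n)}\big)^{*},$$
where $\mathcal{R}_k^{(n)}g:=\sum_{m}[A^n]_{k,m}g_m$ is the $k$-th row of $A^n$ viewed as a map $\ell^2(X;\Hil)\to\Hil$. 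Applying Lemma \ref{opnormineq}(a) with $p=2$ to the adjoint $\big(\mathcal{R}_k^{(n)}\big)^{*}\xi=\big([A^n]_{m,k}\xi\big)_m$ yields $\Vert\mathcal{R}_k^{(n)}\Vert\le\Vert A^n\Vert_{\B}$, and therefore the uniform entrywise bound $\Vert[A^{2n}]_{k,l}\Vert\le\Vert A^n\Vert_{\B}^{2}=r_{\B}(A)^{2n}$. This operator-valued Cauchy--Schwarz step, keeping the $\B(\Hil)$-valued entries intact, is the substitute for the scalar column-$\ell^2$ estimates used in \cite{barnes87,groelei06}.

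The main obstacle is precisely the passage from this $\ell^2$-type (operator-norm) control to the $\ell^1$-type Schur norm $\Vert A^{2n}\Vert_{\MS^1}=\max\{\sup_k\sum_l\Vert[A^{2n}]_{k,l}\Vert,\ \sup_l\sum_k\Vert[A^{2n}]_{k,l}\Vert\}$: the uniform bound above does not sum over the index $l$, and every naive estimate collapses back to mere submultiplicativity $\Vert A^{2n}\Vert_{\MS^1}\le\Vert A^n\Vert_{\MS^1}^{2}$. Bridging this gap is the whole content of Barnes' Lemma. In the scalar case it is resolved because $\Vert A^n\Vert_{\MS^1}=\max\big(\Vert A^n\Vert_{\B(\ell^1)},\Vert A^n\Vert_{\B(\ell^\infty)}\big)$ and, for self-adjoint $A$, the spectral radii of $A$ on $\ell^1$, $\ell^2$ and $\ell^\infty$ coincide; but as the remark following Lemma \ref{Schurimpliesbounded} warns, in the $\B(\Hil)$-valued setting the supremum over test vectors in $\Hil$ no longer commutes with summation, so this identity is lost and the additional estimates of \cite{barnes87} must be supplied by hand. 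Concretely I would aim to upgrade the factorization into an inequality of the form $\Vert A^{2n}\Vert_{\MS^1}\le C_n\,\Vert A^n\Vert_{\MS^1}\,\Vert A^n\Vert_{\B}$ (or $\Vert A^{n}\Vert_{\MS^1}\le C_n\,r_{\B}(A)^{n}$ directly) with subexponential $C_n$, i.e.\ $C_n^{1/n}\to 1$; taking $2n$-th roots and letting $n\to\infty$ via Gelfand's formula then forces $r_{\MS^1}(A)\le r_{\B}(A)$, and I expect establishing the summability (equivalently the correct $C_n$) to be the hard part.

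Once $r_{\MS^1}(A)=r_{\B(\ell^2(X;\Hil))}(A)$ is in hand, the chain (\ref{mainbarns}) closes by combining it with Lemma \ref{lemma46}. The ``in particular'' is then immediate from Hulanicki's Lemma (Proposition \ref{Hulanickilemma}): $\B(\ell^2(X;\Hil))$ is a symmetric Banach *-algebra, $\MS^1_{\nu_\delta}$ is a unital Banach *-algebra (Proposition \ref{Schuralgebraproposition}), and the established inequality $r_{\MS^1_{\nu_\delta}}(A)=r_{\MS^1}(A)=r_{\B(\ell^2(X;\Hil))}(A)$ for all $A=A^{*}$ is exactly condition (iii). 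Hence $\MS^1_{\nu_\delta}$ is inverse-closed in $\B(\ell^2(X;\Hil))$ and, by the last assertion of Proposition \ref{Hulanickilemma}, symmetric.
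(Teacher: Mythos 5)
There is a genuine gap, and you have in fact flagged it yourself: everything in your proposal up to and including the entrywise bound $\Vert [A^{2n}]_{k,l}\Vert \le \Vert A^n\Vert_{\B}^2$ is correct (and is even a clean alternative to the paper's corresponding estimate, which instead uses Cauchy--Schwarz together with an approximate-supremum argument involving $\Vert A\Vert_{\MS^2}$), but the passage from this uniform entrywise control to the $\ell^1$-type Schur norm is precisely the content of Barnes' Lemma, and your proposal leaves it as an aim (``I expect establishing the summability \dots to be the hard part'') rather than proving it. The missing mechanism in the paper is a band/tail decomposition of the row sums at distance $2^n$ from the diagonal: for self-adjoint $A$ one writes
\begin{equation*}
\Vert A^{n+1}\Vert_{\MS^1} \le \sup_{l\in X}\sum_{\substack{k\in X \\ \vert k-l\vert \le 2^n}} \Vert [A^{n+1}]_{k,l}\Vert \; + \; \sup_{l\in X}\sum_{\substack{k\in X \\ \vert k-l\vert > 2^n}} \Vert [A^{n+1}]_{k,l}\Vert ,
\end{equation*}
bounds the near-diagonal block by a counting argument (Lemma \ref{jakoblemma}: at most $C 2^{nd}$ terms, each controlled through $\Vert A\Vert_{\B}$), and bounds the tail by $2^{-n\delta}\Vert A^{n+1}\Vert_{\MS^1_{\nu_\delta}}$, since $1 < 2^{-n\delta}\nu_\delta(k-l)$ off the band. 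After taking $(n+1)$-th roots and letting $n\to\infty$, Gelfand's formula and Lemma \ref{lemma46} (which identifies $r_{\MS^1_{\nu_\delta}}(A)$ with $r_{\MS^1}(A)$, so the tail term can be absorbed because $2^{-\delta}<1$) yield $r_{\MS^1}(A) \le C_{d,\delta}\Vert A\Vert_{\B}$, and applying this to powers $A^m$ removes the constant. None of this splitting, counting, or absorption appears in your proposal.

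The gap is not merely an omitted computation: your stated target inequality $\Vert A^{2n}\Vert_{\MS^1}\le C_n\,\Vert A^n\Vert_{\MS^1}\,\Vert A^n\Vert_{\B}$ with subexponential, $A$-independent $C_n$ involves only \emph{unweighted} norms, so if it were provable it would establish $r_{\MS^1}(A)\le r_{\B}(A)$ for every self-adjoint $A\in\MS^1$ and hence inverse-closedness of the unweighted Schur algebra itself --- a far stronger claim that the present method does not reach. The hypothesis $A\in\MS^1_{\nu_\delta}$ must enter the hard estimate itself (it is exactly what makes the tail sum small), not only through the reduction $r_{\MS^1_{\nu_\delta}}=r_{\MS^1}$ of Lemma \ref{lemma46}. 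With the band/tail decomposition added, your entrywise bound does complete the proof --- indeed it shortens the paper's Step 3, since the near-diagonal sum is then at most $C_1 2^{nd}\Vert A^n\Vert_{\B}^2$ directly --- but as written the proposal stops exactly where the theorem begins.
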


\begin{proof}
We abbreviate $\B = \B(\ell^2(X;\Hil))$.

By Lemma \ref{lemma46} it suffices to show the equation 
\begin{equation}\label{mainbarns2}
r_{\MS^1} (A) = r_{\B}(A) \qquad (\forall A=A^* \in \MS^1_{\nu_{\delta}}).
\end{equation}
Once we have established (\ref{mainbarns2}), the rest of the statement follows from Hulanicki's lemma (Proposition \ref{Hulanickilemma}). Moreover, it suffices to show (\ref{mainbarns2}) for non-zero $A$, since otherwise the statement is trivial upon an application of Gelfand's formula.

\emph{Step 1.} Fix some non-zero $A=A^* \in \MS^1_{\nu_{\delta}}$.  Since $A$ is self-adjoint it holds
$$\Vert A \Vert_{\MS^1} = \sup_{l\in X} \sum_{k\in X} \Vert A_{k,l} \Vert.$$
Thus, for any arbitrary but fixed $n\in \mathbb{N}$, we have 
\begin{equation}\label{split}
\Vert A^{n+1} \Vert_{\MS^1} \leq \sup_{l\in X} \sum_{\substack{k\in X \\ \vert k-l\vert \leq 2^n} } \Vert [A^{n+1}]_{k,l} \Vert + \sup_{l\in X} \sum_{\substack{k\in X \\ \vert k-l\vert > 2^n} } \Vert [A^{n+1}]_{k,l} \Vert.
\end{equation}
Our strategy is to estimate each of the summands of the right hand side of (\ref{split}) separately, then combine those estimates and finally derive the desired equation (\ref{mainbarns2}) via Gelfand's formula. 

\emph{Step 2.} For each $l\in X$, let $B_{2^n}(l) = \lbrace k\in X: \vert k-l\vert \leq 2^n \rbrace$. We claim that there exists some $C_1>0$ such that
\begin{equation}\label{indexestimate}
    \vert B_{2^n}(l) \vert \leq C_1 2^{nd} \qquad (\forall l\in X).
\end{equation}
To prove the claim, we first note that $B_{2^n}(l) \subseteq B^{\infty}_{2^n}(l):= \lbrace k\in X: \vert k-l \vert_{\infty} \leq 2^n \rbrace$. Since $X \subset \Rd$ is relatively separated, we have that $\gamma := \sup_{x\in \Rd} \vert X \cap (x+[0,1]^d) \vert$ is finite. Furthermore, as $B^{\infty}_{2^n}(l)$ can be covered by $(2\cdot 2^n)^d$ many translated unit cubes, we see that $\vert B^{\infty}_{2^n}(l) \vert \leq (2 \cdot 2^n)^d \gamma$. This proves the claim.

\emph{Step 3.} We estimate the first summand of the right hand side of (\ref{split}). Let $l\in X$ be arbitrary. Then 
\begin{flalign}\label{hürde1}
    \sum_{\substack{k\in X \\ \vert k-l\vert \leq 2^n} } \Vert [A^{n+1}]_{k,l} \Vert &= \sum_{\substack{k\in X \\ \vert k-l\vert \leq 2^n} } \left\Vert \sum_{m\in X} [A^{n}]_{k,m} A_{m,l} \right\Vert \cdot 1 \notag \\
    &\leq \left( \sum_{\substack{k\in X \\ \vert k-l\vert \leq 2^n} } \left\Vert \sum_{m\in X} [A^{n}]_{k,m} A_{m,l} \right\Vert^2 \right)^{\frac{1}{2}} \left( \sum_{\substack{k\in X \\ \vert k-l\vert \leq 2^n} } 1 \right)^{\frac{1}{2}} \notag \\
    &\leq (C_1 2^{nd})^{\frac{1}{2}}\left( \sum_{\substack{k\in X \\ \vert k-l\vert \leq 2^n} } \left\Vert \sum_{m\in X} [A^{n}]_{k,m} A_{m,l} \right\Vert^2 \right)^{\frac{1}{2}}
\end{flalign}
by the Cauchy-Schwarz inequality and (\ref{indexestimate}). Next, since the supremum 
$$\Vert [A^{n+1}]_{k,l}\Vert^2 = \sup_{\Vert f \Vert_{\Hil} = 1} \left\Vert \sum_{m\in X} [A^{n}]_{k,m} A_{m,l} f \right\Vert_{\Hil}^2$$
exists for each $k$, we know from the property of suprema that for each $\varepsilon^{(k)} >0$, there exists some unit norm vector $f^{(k)} \in \Hil$, such that 
$$\left\Vert \sum_{m\in X} [A^{n}]_{k,m} A_{m,l} \right\Vert^2 - \varepsilon^{(k)} < \left\Vert \sum_{m\in X} [A^{n}]_{k,m} A_{m,l} f^{(k)}\right\Vert^2 .$$
Since $A \in \B$ by Lemma \ref{Schurimpliesbounded} and $A\neq 0$ by assumption we can choose $\varepsilon^{(k)} = \Vert A \Vert^{2n}_{\B} (1+\vert k \vert)^{-(d+1)} >0$ and may estimate the sum on the right hand side of (\ref{hürde1}) via
\begin{flalign}\label{hürde2}
    &\sum_{\substack{k\in X \\ \vert k-l\vert \leq 2^n} } \left\Vert \sum_{m\in X} [A^{n}]_{k,m} A_{m,l} \right\Vert^2 \notag \\
    &\leq \Vert A \Vert^{2n}_{\B}\sum_{\substack{k\in X \\ \vert k-l\vert \leq 2^n}} (1+\vert k \vert)^{-(d+1)} + \sum_{\substack{k\in X \\ \vert k-l\vert \leq 2^n} } \left\Vert \sum_{m\in X} [A^{n}]_{k,m} A_{m,l} f^{(k)}\right\Vert^2 \notag \\
    &\leq \Vert A \Vert^{2n}_{\B}\sum_{k\in X} (1+\vert k \vert)^{-(d+1)} + \sum_{\substack{k\in X \\ \vert k-l\vert \leq 2^n}} \sum_{j\in X} \left\Vert \sum_{m\in X} [A^{n}]_{j,m} A_{m,l} f^{(k)}\right\Vert^2 \notag \\
    &\leq C_2 \Vert A \Vert^{2n}_{\B} + \sum_{\substack{k\in X \\ \vert k-l\vert \leq 2^n}} \sum_{j\in X} \left\Vert \sum_{m\in X} [A^{n}]_{j,m} A_{m,l} f^{(k)}\right\Vert^2,
\end{flalign}
where we applied Lemma \ref{separatedlemma} for $s=d+1$ in the last estimate. Now, note that since each $f^{(k)} \in \Hil$ is unit norm and since $A\in \MS^2$ (as a consequence of $\MS^1_{\nu_{\delta}} \subseteq \MS^1 \subseteq \MS^2$) we have by an easy computation that $(A_{m,l} f^{(k)})_{m\in X} \in \ell^2(X;\Hil)$ (for each $l\in X$). Thus, since $A^n \in \B$ by Lemma \ref{Schurimpliesbounded},
\begin{flalign}
    \sum_{j\in X} \left\Vert \sum_{m\in X} [A^{n}]_{j,m} A_{m,l} f^{(k)}\right\Vert^2 &\leq \Vert A^n \Vert_{\B}^2 \Vert (A_{m,l} f^{(k)})_{m\in X} \Vert_{\ell^2(X;\Hil)}^2 \notag \\
    &\leq \Vert A \Vert_{\B}^{2n} \Vert A \Vert_{\MS^2}^2 \notag
\end{flalign} 
holds for each $k$. Combining the latter estimate with (\ref{hürde2}) yields 
\begin{flalign}\label{hürde3}
    \sum_{\substack{k\in X \\ \vert k-l\vert \leq 2^n} } \left\Vert \sum_{m\in X} [A^{n}]_{k,m} A_{m,l} \right\Vert^2 &\leq C_2 \Vert A \Vert^{2n}_{\B} + \sum_{\substack{k\in X \\ \vert k-l\vert \leq 2^n}} \Vert A \Vert_{\B}^{2n} \Vert A \Vert_{\MS^2}^2 \notag \\
    &= C_2 \Vert A \Vert^{2n}_{\B} + \vert B_{2^n}(l) \vert \Vert A \Vert_{\B}^{2n} \Vert A \Vert_{\MS^2}^2 \notag \\
     &\leq C_2 \Vert A \Vert^{2n}_{\B} + C_1 2^{nd} \Vert A \Vert_{\B}^{2n} \Vert A \Vert_{\MS^2}^2, 
\end{flalign}
where we applied (\ref{indexestimate}) in the last estimate. Combining (\ref{hürde3}) with (\ref{hürde1}) and taking the supremum over all $l\in X$ yields
\begin{flalign}\label{firstterm}
\sup_{l\in X}\sum_{\substack{k\in X \\ \vert k-l\vert \leq 2^n} } \Vert [A^{n+1}]_{k,l} \Vert &\leq \left(C_1 C_2 2^{nd} \Vert A \Vert^{2n}_{\B} + C_1^2 4^{nd} \Vert A \Vert_{\B}^{2n} \Vert A \Vert_{\MS^2}^2 \right)^{\frac{1}{2}} \notag \\
&\leq (C_1 C_2)^{\frac{1}{2}} 2^{\frac{nd}{2}} \Vert A \Vert^n_{\B} + C_1 2^{nd} \Vert A \Vert^n_{\B} \Vert A \Vert_{\MS^2}, 
\end{flalign}
where we used the subadditivity of the square root function on $[0,\infty)$.

\emph{Step 4.} We estimate the second summand of the right hand side of (\ref{split}). For $\vert k-l \vert > 2^n$ we have $(1+\vert k-l\vert)^{\delta} > (1+2^n)^{\delta} > 2^{n\delta}$ and consequently $1< 2^{-n\delta} \nu_{\delta}(k-l)$. Therefore, 
\begin{flalign}
    \sup_{l\in X} \sum_{\substack{k\in X \\ \vert k-l\vert > 2^n} } \Vert [A^{n+1}]_{k,l} \Vert &\leq 2^{-n\delta} \sup_{l\in X} \sum_{\substack{k\in X \\ \vert k-l\vert > 2^n} } \Vert [A^{n+1}]_{k,l} \Vert \nu_{\delta}(k-l) \notag \\ 
    &\leq 2^{-n\delta} \Vert A^{n+1} \Vert_{\MS^1_{\nu_{\delta}}}. \notag
\end{flalign}

\emph{Step 5.}
After combining the main estimates from Step 1, Step 3 and Step 4, we obtain that 
\begin{flalign}
\Vert A^{n+1} \Vert_{\MS^1}^{\frac{1}{n+1}} &\leq \Big( (C_1 C_2)^{\frac{1}{2}} 2^{\frac{nd}{2}} \Vert A \Vert^n_{\B} + C_1 2^{nd} \Vert A \Vert^n_{\B} \Vert A \Vert_{\MS^2} + 2^{-n\delta} \Vert A^{n+1} \Vert_{\MS^1_{\nu_{\delta}}} \Big)^{\frac{1}{n+1}} \notag \\
&\leq (C_1 C_2)^{\frac{1}{2(n+1)}} 2^{\frac{nd}{2(n+1)}} \Vert A \Vert^{\frac{n}{n+1}}_{\B} + C_1^{\frac{1}{n+1}} 2^{\frac{nd}{n+1}} \Vert A \Vert_{\B}^{\frac{n}{n+1}} \Vert A \Vert_{\MS^2}^{\frac{1}{n+1}} + 2^{\frac{-n\delta}{n+1}} \Vert A^{n+1} \Vert_{\MS^1_{\nu_{\delta}}}^{\frac{1}{n+1}} ,\notag 
\end{flalign}
where we used the subadditivity of $f(x) = x^{\frac{1}{n+1}}$ on $[0,\infty)$ in the second estimate.
Since $n$ was arbitrary, the latter estimate holds for all $n\in \mathbb{N}$. Letting $n\rightarrow \infty$ yields via Gelfand's formula
$$r_{\MS^1}(A) \leq (2^{d/2} + 2^d) \Vert A \Vert_{\B} + 2^{-\delta} r_{\MS^1_{\nu_{\delta}}}(A).$$
By Lemma \ref{lemma46} we have $r_{\MS^1_{\nu}}(A) = r_{\MS^1}(A)$. Hence the above estimate implies that 
\begin{equation}\label{almostfinal}
r_{\MS^1}(A) \leq (1-2^{-\delta})^{-1}(2^{d/2} + 2^d)\Vert A \Vert_{\B}.
\end{equation}

\emph{Step 6.} Finally, since $A$ was chosen arbitrary, (\ref{almostfinal}) holds for all (non-zero) self-adjoint $A\in \MS^1_{\nu_{\delta}}$. Therefore we have  
$$r_{\MS^1}(A) = r_{\MS^1}(A^n)^{\frac{1}{n}} \leq \left((1-2^{-\delta})^{-1}(2^{d/2} + 2^d))\right)^{\frac{1}{n}} \Vert A^n \Vert_{\B}^{\frac{1}{n}}$$
for all $n\in \mathbb{N}$ and all $A=A^* \in \MS^1_{\nu_{\delta}}$. Letting $n\rightarrow \infty$ yields 
$$r_{\MS^1}(A) \leq r_{\B}(A) \qquad (\forall A=A^* \in \MS^1_{\nu_{\delta}}).$$
Conversely, by (\ref{spectrum1}), $r_{\MS^1}(A) \geq r_{\B}(A)$ holds true, since $\MS_{\nu_{\delta}}^1 \subseteq \MS^1 \subseteq \B$. This completes the proof.
\end{proof}

Following the ideas from \cite{groelei06}, we can extend the above statement to a more general class of weights and obtain a broader class of Schur-type algebras which are inverse-closed in $\B(\ell^2(X;\Hil))$.

We need the following preparatory result.

\begin{lemma}\label{admissiblesequence}
\cite[Lemma 8]{groelei06} For any unbounded admissible weight function $\nu$ there exists a sequence of admissible weights $\nu_n$ with the following properties:
\begin{itemize}
    \item[(a)] $\nu_{n+1} \leq \nu_n \leq \nu$ for all $n\in \mathbb{N}$.
    \item[(b)] There exist $c_n >0$ such that $\nu \leq c_n \nu_n$ for all $n\in \mathbb{N}$.
    \item[(c)] $\lim_{n\rightarrow \infty} \nu_n = 1$ uniformly on compact sets of $\mathbb{R}^d$. 
\end{itemize}
In particular, all the weights $\nu_n$ are equivalent and 
$$r_{\MS^1_{\nu}}(A) = r_{\MS^1_{\nu_n}}(A) \qquad (\forall A\in \MS^1_{\nu}, \forall n\in \mathbb{N}).$$
\end{lemma}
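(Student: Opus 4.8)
The plan is to construct the sequence $\nu_n$ explicitly from the concave exponent of $\nu$ and then read off (a)--(c) from elementary convexity estimates, deducing the ``in particular'' part by a soft norm-equivalence argument. Since $\nu$ is admissible, I would write $\nu(x) = e^{\rho(\Vert x \Vert)}$ with $\rho:[0,\infty)\to[0,\infty)$ continuous, concave and $\rho(0)=0$. Concavity together with $\rho(0)=0$ forces $\rho$ to be non-decreasing and subadditive (i.e. $\rho(a+b)\le\rho(a)+\rho(b)$), while the GRS-condition is equivalent to the vanishing of the asymptotic slope $\lim_{t\to\infty}\rho(t)/t=0$; unboundedness of $\nu$ means $\rho(t)\to\infty$. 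Fixing any strictly increasing sequence $s_n\to\infty$, I would set
\[
\rho_n(t) := \rho(t+s_n)-\rho(s_n), \qquad \nu_n(x) := e^{\rho_n(\Vert x \Vert)}.
\]
Each $\rho_n$ is again continuous, concave and non-decreasing with $\rho_n(0)=0$ and asymptotic slope $0$, so every $\nu_n$ is admissible.

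Next I would verify (a)--(c). For (a), subadditivity of $\rho$ gives $\rho(t+s_n)\le\rho(t)+\rho(s_n)$, i.e. $\rho_n\le\rho$ and hence $\nu_n\le\nu$; and since the increments of a concave function are non-increasing, the choice $s_{n+1}\ge s_n$ yields $\rho_{n+1}\le\rho_n$, i.e. $\nu_{n+1}\le\nu_n$. For (c), monotonicity of $\rho$ gives $\rho_n(t)\le\rho_n(R)=\rho(s_n+R)-\rho(s_n)$ for $t\in[0,R]$, and the latter is at most $R\,\rho'(s_n)\to 0$, since the right derivative $\rho'$ of the concave function $\rho$ is non-increasing with limit equal to the asymptotic slope $0$; thus $\nu_n\to 1$ uniformly on compact sets. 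For (b) I would examine $g(t):=\rho(t)-\rho_n(t)=\rho(t)+\rho(s_n)-\rho(t+s_n)$: it is non-negative by subadditivity, vanishes at $t=0$, and is non-decreasing in $t$ (its derivative $\rho'(t)-\rho'(t+s_n)\ge 0$), with $g(t)\uparrow\rho(s_n)$ because $\rho(t+s_n)-\rho(t)\le s_n\,\rho'(t)\to 0$. Hence $\rho-\rho_n\le\rho(s_n)$, so $\nu\le c_n\nu_n$ with $c_n:=e^{\rho(s_n)}$.

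Finally, the ``in particular'' part is soft. Combining (a) and (b) gives $\nu_n\le\nu\le c_n\nu_n$, and comparing two indices yields $c_m^{-1}\nu_n\le\nu_m\le c_n\nu_n$, so all the $\nu_n$ (and $\nu$ itself) are pairwise equivalent weights; in particular each $\MS^1_{\nu_n}$ is a Banach algebra by Proposition \ref{completenessproof} together with the submultiplicativity argument in the proof of Proposition \ref{Schuralgebraproposition}. Since the weight enters the defining norm (\ref{Schurnorm}) only to the first power, the pointwise bound $\nu_n\le\nu\le c_n\nu_n$ transfers directly to $\Vert A^k\Vert_{\MS^1_{\nu_n}}\le\Vert A^k\Vert_{\MS^1_{\nu}}\le c_n\Vert A^k\Vert_{\MS^1_{\nu_n}}$ for every $k\in\mathbb{N}$ (the matrices in the two algebras being the same). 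Taking $k$-th roots, using $c_n^{1/k}\to 1$, and letting $k\to\infty$ in Gelfand's formula (\ref{Gelfand}) then forces $r_{\MS^1_{\nu}}(A)=r_{\MS^1_{\nu_n}}(A)$.

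The main obstacle worth flagging is the apparent tension between (b) and (c): condition (c) forces $\nu_n\to 1$ on compacta, while (b) forces $\nu_n$ to stay within a fixed factor of the \emph{unbounded} weight $\nu$. The hard part is to reconcile these, and the reconciliation is precisely the GRS-condition: a vanishing asymptotic slope makes the shifted increments $\rho(t+s_n)-\rho(s_n)$ small on each compact set, yet keeps the global defect $\rho-\rho_n$ bounded (by $\rho(s_n)$). This is exactly where concavity and subadditivity of $\rho$ do all the work, and once the construction is pinned down the remaining verifications are routine.
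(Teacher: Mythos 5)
The paper does not actually prove this lemma: it is imported verbatim, with citation, from \cite[Lemma 8]{groelei06}, so there is no in-paper argument to compare yours against; your proposal is judged on its own and it is correct and self-contained. The shift construction $\rho_n(t)=\rho(t+s_n)-\rho(s_n)$ does produce admissible weights (concavity plus $\rho(0)=0$ and non-negativity give monotonicity and subadditivity of $\rho$; the asymptotic slope of each $\rho_n$ is again $0$, which is exactly the GRS-condition), and your verifications of (a)--(c) use only standard facts about concave functions: increments over intervals of fixed length decrease in the starting point, the graph lies below the right-derivative tangent, and $\rho'_+(t)\to\lim_{t\to\infty}\rho(t)/t=0$. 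Two minor remarks. First, (b) is even more immediate than your argument suggests: $\rho(t)+\rho(s_n)-\rho(t+s_n)\le\rho(s_n)$ is equivalent to $\rho(t)\le\rho(t+s_n)$, i.e.\ to monotonicity of $\rho$, so the discussion of the monotone limit of $g$ can be dropped; relatedly, since $\rho$ need not be differentiable, the statements involving $\rho'$ should consistently be read as right derivatives (as you do at the decisive step in (c)). Second, your ``in particular'' part is exactly the right soft argument: the weight enters the norm (\ref{Schurnorm}) linearly, so $\nu_n\le\nu\le c_n\nu_n$ yields $\Vert A^k\Vert_{\MS^1_{\nu_n}}\le\Vert A^k\Vert_{\MS^1_{\nu}}\le c_n\Vert A^k\Vert_{\MS^1_{\nu_n}}$ for all $k$, and Gelfand's formula (\ref{Gelfand}) --- applicable because each $\MS^1_{\nu_n}$ is a unital Banach algebra by Proposition \ref{completenessproof} together with the submultiplicativity argument of Proposition \ref{Schuralgebraproposition} --- absorbs the constants $c_n^{1/k}\to1$. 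Incidentally, your construction never uses the unboundedness of $\nu$, so you in fact prove a marginally more general statement than the one quoted.
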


The proof of the next lemma can be adapted almost word by word from the proof of \cite[Lemma 9 (a)]{groelei06}.

\begin{lemma}\label{lemma9}
Let $\nu$ be an admissible weight function satisfying the weak growth condition 
$$\nu(x) \geq C (1+\vert x \vert)^{\delta} \qquad \text{for some } \delta\in (0,1], C>0.$$
Then, with $\nu_n$ is in Lemma \ref{admissiblesequence}, it holds 
$$\lim_{n\rightarrow \infty} \Vert A \Vert_{\MS^1_{\nu_n}} =  \Vert A \Vert_{\MS^1_{\nu}} \qquad (\forall A= A^* \in \MS^1_{\nu}).$$
\end{lemma}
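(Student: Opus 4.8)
The plan is to prove the two one-sided estimates $\limsup_{n} \Vert A \Vert_{\MS^1_{\nu_n}} \le \Vert A \Vert_{\MS^1_{\nu}}$ and $\liminf_{n} \Vert A \Vert_{\MS^1_{\nu_n}} \ge \Vert A \Vert_{\MS^1_{\nu}}$ separately. Fix a self-adjoint $A = [A_{k,l}]_{k,l\in X} \in \MS^1_{\nu}$. First I would exploit self-adjointness to collapse the two suprema in the Schur norm into one: by Proposition \ref{adjointmatrix} we have $A_{k,l} = A_{l,k}^*$, hence $\Vert A_{k,l}\Vert = \Vert A_{l,k}\Vert$, and since $\nu$ (and every admissible $\nu_n$) is symmetric, both terms in the $\max$ defining $\Vert \cdot \Vert_{\MS^1_{\nu}}$ coincide. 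Thus $\Vert A \Vert_{\MS^1_{\nu}} = \sup_{l\in X}\sum_{k\in X}\Vert A_{k,l}\Vert\,\nu(k-l)$, and likewise for each $\nu_n$. This reduction to a single column-supremum is what makes the interchange of limits below tractable.

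The upper bound is immediate. Since $\nu_n \le \nu$ by Lemma \ref{admissiblesequence}(a), every weighted column sum computed with $\nu_n$ is dominated by the corresponding one computed with $\nu$, so $\Vert A \Vert_{\MS^1_{\nu_n}} \le \Vert A \Vert_{\MS^1_{\nu}}$ for all $n$, and hence $\limsup_{n} \Vert A \Vert_{\MS^1_{\nu_n}} \le \Vert A \Vert_{\MS^1_{\nu}}$.

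The lower bound is the heart of the argument. Given $\varepsilon > 0$, I would first pick $l_0 \in X$ with $\sum_{k\in X}\Vert A_{k,l_0}\Vert\,\nu(k-l_0) > \Vert A \Vert_{\MS^1_{\nu}} - \varepsilon$, possible by definition of the supremum. As this series converges, I then truncate: choose a \emph{finite} set $F \subset X$ with $\sum_{k\in F}\Vert A_{k,l_0}\Vert\,\nu(k-l_0) > \Vert A \Vert_{\MS^1_{\nu}} - 2\varepsilon$. The decisive point is that $l_0$ and $F$ are now frozen independently of $n$, so the differences $\{k-l_0 : k\in F\}$ form a fixed finite, hence compact, subset of $\Rd$. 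Invoking the convergence $\nu_n \to \nu$ uniformly on compact sets guaranteed by the construction of the approximating weights in Lemma \ref{admissiblesequence}, there is an $N$ with $\nu_n(k-l_0) \ge (1-\varepsilon)\,\nu(k-l_0)$ for all $n\ge N$ and all $k\in F$. Since all summands are nonnegative, discarding every column but $l_0$ and every row outside $F$ gives, for $n\ge N$, the bound $\Vert A \Vert_{\MS^1_{\nu_n}} \ge \sum_{k\in F}\Vert A_{k,l_0}\Vert\,\nu_n(k-l_0) \ge (1-\varepsilon)\big(\Vert A \Vert_{\MS^1_{\nu}} - 2\varepsilon\big)$. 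Letting $n\to\infty$ and then $\varepsilon\to 0$ yields $\liminf_{n}\Vert A \Vert_{\MS^1_{\nu_n}} \ge \Vert A \Vert_{\MS^1_{\nu}}$, and combining with the upper bound completes the proof.

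The one genuinely delicate point, and the step I expect to be the main obstacle, is that $\lim_{n}$ and $\sup_{l}$ do not commute: for each \emph{fixed} column $l$ the weighted sum $\sum_{k}\Vert A_{k,l}\Vert\,\nu_n(k-l)$ behaves in a controlled way, but the near-maximizing column may depend on $n$, so one cannot simply pass the limit under the supremum. The truncation trick above circumvents this precisely by selecting an almost-optimal column $l_0$ and a finite window $F$ \emph{before} sending $n\to\infty$, reducing the interchange to uniform convergence on a single fixed compact set. Note that the weak growth condition on $\nu$ is not used in this limiting argument itself; it is a standing hypothesis ensuring the approximating sequence of Lemma \ref{admissiblesequence} has the properties required for the subsequent inverse-closedness theorem, so the present proof rests entirely on the two facts $\nu_n \le \nu$ and $\nu_n \to \nu$ uniformly on compacta.
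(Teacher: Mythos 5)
Your argument has a fatal misreading at its core: Lemma \ref{admissiblesequence}(c) states that $\nu_n \rightarrow 1$ uniformly on compact sets, \emph{not} that $\nu_n \rightarrow \nu$. The weights $\nu_n$ are each equivalent to $\nu$ (property (b)), but pointwise they flatten out, so on your fixed finite window $F$ you can only conclude $\nu_n(k-l_0) \rightarrow 1$, and your lower-bound step degenerates to $\liminf_n \Vert A \Vert_{\MS^1_{\nu_n}} \geq \Vert A \Vert_{\MS^1}$, the \emph{unweighted} Schur norm. In fact the identity you set out to prove is false as printed: the right-hand side of the lemma is a typo for $\Vert A \Vert_{\MS^1}$, as one sees both from how the lemma is invoked in the proof of Theorem \ref{Schurspectraltheorem} (there the conclusion used is $\lim_{n\rightarrow\infty}\Vert A^m \Vert_{\MS^1_{\nu_n}} = \Vert A^m \Vert_{\MS^1}$) and from its source, \cite[Lemma 9 (a)]{groelei06}, whose proof the paper adapts. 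A two-entry counterexample: fix $k_0 \neq l_0$ in $X$, set $A_{k_0,l_0} = T \neq 0$, $A_{l_0,k_0} = T^*$, all other entries zero; then $A = A^* \in \MS^1_{\nu}$ and $\Vert A \Vert_{\MS^1_{\nu_n}} = \Vert T \Vert \, \nu_n(k_0-l_0) \rightarrow \Vert T \Vert = \Vert A \Vert_{\MS^1}$, whereas $\Vert A \Vert_{\MS^1_{\nu}} = \Vert T \Vert \, \nu(k_0-l_0) > \Vert T \Vert$, since an unbounded admissible weight satisfies $\nu > 1$ away from the origin.

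For the corrected statement your proof also inverts where the difficulty sits. The inequality $\Vert A \Vert_{\MS^1_{\nu_n}} \geq \Vert A \Vert_{\MS^1}$ is trivial because admissible weights satisfy $\nu_n \geq 1$; the genuinely delicate direction is $\limsup_n \Vert A \Vert_{\MS^1_{\nu_n}} \leq \Vert A \Vert_{\MS^1}$, and there your finite-window truncation is structurally useless: to bound a supremum over \emph{all} columns from above one must control every column simultaneously, not a single near-optimal one. The argument of \cite[Lemma 9 (a)]{groelei06}, which the paper follows, instead splits each column sum at a radius $R$, estimating the near part by $\sup_{\vert x \vert \leq R}\nu_n(x) \cdot \Vert A \Vert_{\MS^1}$, which tends to $\Vert A \Vert_{\MS^1}$ by property (c), and the far part by $\sup_{\vert x \vert > R}\bigl(\nu_n(x)/\nu(x)\bigr)\, \Vert A \Vert_{\MS^1_{\nu}}$, which is then made small by letting $R \rightarrow \infty$, using the weak growth condition $\nu(x) \geq C(1+\vert x \vert)^{\delta}$. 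This also refutes your closing remark: the growth condition is used essentially in this very limiting argument --- it is exactly what kills the far-off-diagonal contribution --- and is not merely a standing hypothesis for the later inverse-closedness theorem.
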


Having the previous two lemmata and Theorem \ref{thm47} at hand, we can prove the following analogue of \cite[Theorem 6, Corollary 7]{groelei06}.

\begin{theorem}\label{Schurspectraltheorem}
Let $\nu$ be an admissible weight satisfying the weak growth condition 
$$\nu(x) \geq C (1+\vert x \vert)^{\delta} \qquad \text{for some } \delta\in (0,1], C>0.$$
Then 
\begin{equation}\label{spectralradiiequ2}
    r_{\MS^1_{\nu}}(A) = r_{\B(\ell^2(X;\Hil))}(A) \qquad (\forall A = A^* \in \MS^1_{\nu}).
\end{equation}
In particular, $\MS^1_{\nu}$ is inverse-closed in $\B(\ell^2(X;\Hil))$ and $\MS^1_{\nu}$ is a symmetric Banach algebra.
\end{theorem}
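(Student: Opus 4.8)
The plan is to deduce everything from Hulanicki's Lemma (Proposition \ref{Hulanickilemma}). Since $\B := \B(\ell^2(X;\Hil))$ is a symmetric Banach *-algebra and $\MS^1_{\nu}$ is a unital Banach *-algebra by Proposition \ref{Schuralgebraproposition}, it suffices to verify the inequality of spectral radii $r_{\MS^1_{\nu}}(A) \leq r_{\B}(A)$ for every self-adjoint $A \in \MS^1_{\nu}$; the reverse inequality $r_{\B}(A) \leq r_{\MS^1_{\nu}}(A)$ is automatic from the inclusion $\MS^1_{\nu} \subseteq \B$ together with (\ref{spectrum1}). Once the equality $r_{\MS^1_{\nu}}(A) = r_{\B}(A)$ is in hand, Proposition \ref{Hulanickilemma} yields at once both the inverse-closedness of $\MS^1_{\nu}$ in $\B$ and the symmetry of $\MS^1_{\nu}$, i.e.\ the ``In particular'' part of the statement.

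The remaining inequality I would reduce to the polynomial case already settled in Theorem \ref{thm47}. The weak growth condition forces $\nu$ to be unbounded, so Lemma \ref{admissiblesequence} provides admissible weights $\nu_n$ with $\nu_n \leq \nu \leq c_n \nu_n$, with $\nu_n \to 1$ uniformly on compact sets, and, crucially, with $r_{\MS^1_{\nu}}(A) = r_{\MS^1_{\nu_n}}(A)$ for every $n$; moreover each $\nu_n$ inherits a growth bound $\nu_n(x) \geq c_n^{-1}C(1+\vert x\vert)^{\delta}$. Thus it is enough to control $r_{\MS^1_{\nu_n}}(A)$, and here I would rerun the Barnes-type splitting from the proof of Theorem \ref{thm47}: writing $\Vert A^{p+1}\Vert_{\MS^1_{\nu_n}}$ as a sum over a near-diagonal block $\{\vert k-l\vert \leq R\}$ and its complement, the off-diagonal part acquires a contraction factor because the growth condition makes the weight large far from the diagonal, while the near-diagonal part is handled exactly as in Steps 2--3 of Theorem \ref{thm47}, through the operator norm $\Vert A\Vert_{\B}$ and the cardinality/Cauchy--Schwarz bounds coming from Lemma \ref{jakoblemma}. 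The role of passing to $\nu_n$ is that, by property (c) of Lemma \ref{admissiblesequence}, the weight factor produced on the near-diagonal block tends to $1$; so after taking $p$-th roots (Gelfand's formula) and letting $n \to \infty$, using Lemma \ref{lemma9} to pass $\Vert A^{p+1}\Vert_{\MS^1_{\nu_n}} \to \Vert A^{p+1}\Vert_{\MS^1_{\nu}}$ for the self-adjoint power $A^{p+1}$, one is led to an estimate of the form $r_{\MS^1_{\nu}}(A) \leq \kappa\,\Vert A\Vert_{\B}$ with $\kappa$ independent of $A$. Replacing $A$ by $A^m$ and letting $m \to \infty$ then upgrades $\Vert A\Vert_{\B}$ to $r_{\B}(A)$.

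The main obstacle I expect is precisely the bookkeeping of the three interlocking limits $-$ the Gelfand limit $p \to \infty$, the radius $R \to \infty$, and the approximation $n \to \infty$ $-$ together with the deterioration of the comparison constant $c_n$. For a subexponentially growing admissible weight one \emph{cannot} tie the splitting radius $R$ to the power $p$ as was done in Theorem \ref{thm47}, since then the near-diagonal weight factor $\sup_{\vert x\vert \leq R}\nu(x)$ would blow up under the $(p+1)$-st root; the purpose of the flattening sequence $\nu_n$ and of the norm convergence in Lemma \ref{lemma9} is exactly to decouple these scales, keeping the off-diagonal contraction factor strictly below $1$ while the near-diagonal weight factor is driven to $1$. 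Making this simultaneous limiting argument precise is where most of the effort lies, after which the Banach *-algebra structure and the Hulanicki step are routine. This parallels the proof of \cite[Theorem 6, Corollary 7]{groelei06}.
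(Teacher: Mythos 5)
Your overall framework (Hulanicki's Lemma \ref{Hulanickilemma}, the reverse inequality via (\ref{spectrum1}), reduction to the polynomial case through the flattening weights $\nu_n$) matches the paper's, but the mechanism you propose for the core inequality $r_{\MS^1_{\nu}}(A)\leq r_{\B(\ell^2(X;\Hil))}(A)$ has a genuine gap: the Barnes-type splitting of $\Vert A^{p+1}\Vert_{\MS^1_{\nu_n}}$ with a radius $R$ decoupled from the power $p$ cannot be closed, and the difficulty you defer as ``bookkeeping of the three interlocking limits'' is not bookkeeping but an obstruction. Any contraction factor attached to the off-diagonal block that does not depend on $p$ (it can only depend on $R$ and $n$) is destroyed by Gelfand's formula, since $\epsilon(R,n)^{1/(p+1)}\rightarrow 1$ as $p\rightarrow\infty$; after taking roots the off-diagonal term therefore contributes the full $r_{\MS^1_{\nu_n}}(A)=r_{\MS^1_{\nu}}(A)$ with constant $1$, and nothing can be absorbed, in whatever order you then send $R\rightarrow\infty$ and $n\rightarrow\infty$. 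A contraction that survives the root must decay exponentially in $p$, which forces $R$ to be tied to $p$ as in Theorem \ref{thm47} --- exactly the regime you correctly exclude for a subexponentially weighted norm. Moreover, the trade used in Step 4 of Theorem \ref{thm47} (replace $1$ by $2^{-p\delta}\nu_{\delta}(k-l)$ and pay with the $\nu_{\delta}$-weighted norm) has no analogue for $\Vert A^{p+1}\Vert_{\MS^1_{\nu_n}}$: an element of $\MS^1_{\nu}$ has no off-diagonal decay beyond $\nu$, and $\nu$ is equivalent to every $\nu_n$ (Lemma \ref{admissiblesequence}), so there is no strictly larger weight with finite Schur norm of $A$ to pay with. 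Finally, your appeal to Lemma \ref{lemma9} in the form $\Vert A^{p+1}\Vert_{\MS^1_{\nu_n}}\rightarrow\Vert A^{p+1}\Vert_{\MS^1_{\nu}}$ would make the reduction circular; the statement actually needed (and used in the paper's proof --- the displayed formula in Lemma \ref{lemma9} is a typo, cf. \cite[Lemma 9]{groelei06}) is convergence to the \emph{unweighted} norm $\Vert A^{p+1}\Vert_{\MS^1}$.

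The paper avoids all of this because no splitting has to be rerun for general $\nu$: the splitting argument is confined, once and for all, to Theorem \ref{thm47}, which is then invoked as a black box. For self-adjoint $A\in\MS^1_{\nu}$ and all $m,n\in\mathbb{N}$ one has $r_{\MS^1_{\nu}}(A)^m=r_{\MS^1_{\nu}}(A^m)=r_{\MS^1_{\nu_n}}(A^m)\leq\Vert A^m\Vert_{\MS^1_{\nu_n}}$ by the weight equivalence of Lemma \ref{admissiblesequence} together with spectral radius $\leq$ norm; letting $n\rightarrow\infty$ via (the corrected) Lemma \ref{lemma9} gives $r_{\MS^1_{\nu}}(A)^m\leq\Vert A^m\Vert_{\MS^1}$; taking $m$-th roots and $m\rightarrow\infty$ gives $r_{\MS^1_{\nu}}(A)\leq r_{\MS^1}(A)$. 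Since the growth condition yields $\MS^1_{\nu}\subseteq\MS^1_{\nu_{\delta}}$, Theorem \ref{thm47} applies and gives $r_{\MS^1}(A)=r_{\B(\ell^2(X;\Hil))}(A)$, and Hulanicki's Lemma finishes exactly as you describe. Your ingredients are the right ones; the repair is to discard the splitting rerun entirely and replace it by this two-step passage to the limit.
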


\begin{proof}
We follow the arguments of \cite[Lemma 9 (b)]{groelei06}.

By the equivalence of the weights $\nu_n$ from Lemma \ref{admissiblesequence} we have
$$r_{\MS^1_{\nu}}(A)^m = r_{\MS^1_{\nu}}(A^m) = r_{\MS^1_{\nu_n}}(A^m) \leq \Vert A^m \Vert_{\MS^1_{\nu_n}} \qquad (\forall A\in \MS^1_{\nu}, \forall m,n\in \mathbb{N}).$$
By Lemma \ref{lemma9}, this implies that 
$$r_{\MS^1_{\nu}}(A)^m \leq \lim_{n\rightarrow \infty} \Vert A^m \Vert_{\MS^1_{\nu_n}} = \Vert A^m \Vert_{\MS^1} \qquad (\forall A=A^*\in \MS^1_{\nu}, \forall m\in \mathbb{N}).$$
Thus, taking $m$-th roots and letting $m\rightarrow \infty$ yields 
$$r_{\MS^1_{\nu}}(A) \leq r_{\MS^1}(A) \qquad (\forall A=A^*\in \MS^1_{\nu}).$$
However, since $\nu$ is weakly growing by assumption, we have $\MS^1_{\nu} \subseteq \MS^1_{\nu_{\delta}}$. Therefore, Theorem \ref{thm47} implies that 
$$r_{\MS^1_{\nu}}(A) \leq r_{\MS^1}(A) = r_{\B(\ell^2(X;\Hil))}(A) \qquad (\forall A=A^*\in \MS^1_{\nu}) .$$
Conversely, by (\ref{spectrum1}), we have $r_{\B(\ell^2(X;\Hil))}(A) \leq r_{\MS^1_{\nu}}(A)$ for all $A=A^*\in \MS^1_{\nu}$ since $\MS^1_{\nu} \subseteq \B(\ell^2(X;\Hil))$. Thus (\ref{spectralradiiequ2}) follows. Now, having established the desired equation of spectral radii, the rest of the theorem follows from Hulanicki's lemma (Proposition \ref{Hulanickilemma}).
\end{proof}

\begin{remark}
1.) The weak growth condition in Theorem \ref{Schurspectraltheorem} is essential. In fact, by giving a concrete counter-example, Tessera proved in \cite{Tess09} that the unweighted scalar-valued Schur algebra $\MS^1(\mathbb{N})$ is \emph{not} inverse-closed in $\B(\ell^p(\mathbb{N};\mathbb{C}))$ for each $1<p<\infty$. As emphasized in the same reference, $\MS^1(\mathbb{N})$ is not even symmetric \cite{jen70}.

2.) Sun proved in \cite{sun07a} the inverse-closedness of the scalar-valued versions of $\MS^p_{\nu}(X)$ in $\B(\ell^2(X;\mathbb{C}))$. We believe that the methods from \cite{sun07a} can be used to obtain analogous results in the $\B(\Hil)$-valued setting. We leave a detailed investigation to future research.
\end{remark}

\section{Polynomial and more general off-diagonal decay conditions}\label{Off-diagonal decay conditions via admissible weights}

After proving that the Schur algebra $\MS_{\nu}^1$ is inverse-closed in $\B(\ell^2(X;\Hil))$, we can follow the ideas from \cite{groelei06} and derive more examples of Wiener pairs $\A \subseteq \B(\ell^2(X;\Hil))$. Most notably, we will consider a $\B(\Hil)$-valued version of the Jaffard algebra and prove its inverse-closedness in $\B(\ell^2(X;\Hil))$.

Let $X\subset \mathbb{R}^d$ be  relatively separated and $\nu$ be an admissible weight in the sense of Definition \ref{charlyadmissibledef}. Then we define $\J_{\nu} = \J_{\nu}(X)$ to be the space of all $\B(\Hil)$-valued matrices $A=[A_{k,l}]_{k,l\in X}$ for which there exists $C>0$ such that
\begin{equation}\label{offdiagonalcondition}
\Vert A \Vert_{\J_{\nu}} := \sup_{k,l\in X} \Vert A_{k,l} \Vert \cdot \nu(k-l) \leq C.  
\end{equation} 
Again, it is easy to see that (\ref{offdiagonalcondition}) indeed defines a norm on $\J_{\nu}$ and that $(\J_{\nu}, \Vert \, . \, \Vert_{\J_{\nu}})$ is precisely the Bochner sequence space $\ell_{u}^{\infty}(X\times X; \B(\Hil))$, where $u(k,l) = \nu(k-l)$, and thus a Banach space. %Moreover, note that $\J_{\nu_s} = \J_s$ equals the Jaffard algebra with decay parameter $s$. 

The proof of the following lemma is analogous to the proof of \cite[Lemma 1 (b)]{groelei06} and is therefore omitted. 

\begin{lemma}\label{lemmaalgebrageneral}
Let $\nu$ be an admissible weight. If 
\begin{equation}\label{generalv1}
    \sup_{k\in X} \sum_{l\in X} \nu(k-l)^{-1} <\infty 
\end{equation}
then $\J_{\nu}$ is contained in $\MS^1$. If additionally $\exists C>0$ such that 
\begin{equation}\label{generalv2}
\sum_{n\in X} \nu(k-n)^{-1}\nu(n-l)^{-1} < C \nu(k-l) \qquad (\forall k,l\in X) 
\end{equation}
then $\J_{\nu}$ is a unital *-algebra which satisfies $\J_{\nu} \subseteq \B(\ell^2(X;\Hil))$. 
\end{lemma}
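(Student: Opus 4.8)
The plan is to mimic the argument for the Jaffard algebra (Lemma \ref{lemmaalgebra}), replacing the explicit polynomial estimates of Lemma \ref{separatedlemma} by the abstract hypotheses (\ref{generalv1}) and (\ref{generalv2}). Throughout I would use the defining pointwise bound $\Vert A_{k,l}\Vert \leq \Vert A\Vert_{\J_{\nu}}\,\nu(k-l)^{-1}$, which is immediate from the definition of $\Vert\cdot\Vert_{\J_{\nu}}$, together with the facts that an admissible weight is symmetric and satisfies $\nu(0)=1$.

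First I would establish $\J_{\nu} \subseteq \MS^1$. Given $A \in \J_{\nu}$, the pointwise bound gives $\sum_{l} \Vert A_{k,l}\Vert \leq \Vert A\Vert_{\J_{\nu}}\sum_l \nu(k-l)^{-1}$, so taking the supremum over $k$ and using (\ref{generalv1}) controls the first of the two quantities defining $\Vert\cdot\Vert_{\MS^1}$. For the column sum I would interchange the roles of $k$ and $l$ and invoke the symmetry $\nu(k-l)=\nu(l-k)$, so that (\ref{generalv1}) applies again; hence both suprema are finite and $A\in\MS^1$. Composing this with Lemma \ref{Schurimpliesbounded} (taking $m\equiv\nu\equiv 1$) yields the chain $\J_{\nu}\subseteq\MS^1\subseteq\B(\ell^2(X;\Hil))$, which in particular already settles the inclusion $\J_{\nu}\subseteq\B(\ell^2(X;\Hil))$ claimed at the end.

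For the algebra structure I would next verify closure under the multiplication and involution inherited from $\B(\ell^2(X;\Hil))$. For $A,B\in\J_{\nu}$ I would bound $\Vert[AB]_{k,l}\Vert \leq \sum_n \Vert A_{k,n}\Vert\,\Vert B_{n,l}\Vert$, insert the two pointwise estimates, and then invoke the convolution-type hypothesis (\ref{generalv2}) to absorb the weight $\nu(k-l)$; this produces $\Vert AB\Vert_{\J_{\nu}}\leq C\Vert A\Vert_{\J_{\nu}}\Vert B\Vert_{\J_{\nu}}<\infty$, so $\J_{\nu}$ is closed under matrix multiplication. (As with the Jaffard norm in (\ref{notBanachalgebranorm}), this need not make $\Vert\cdot\Vert_{\J_{\nu}}$ a Banach-algebra norm, but that is harmless since only the \emph{*}-algebra property is asserted.) The absolute convergence furnished by the same estimate guarantees that each entry $[AB]_{k,l}=\sum_n A_{k,n}B_{n,l}$ converges in operator norm, so by Remark \ref{involutioncomposition} matrix multiplication coincides with operator composition and is therefore associative. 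The identity $\mathcal{I}_{\J_{\nu}}=\mathcal{I}_{\B(\ell^2(X;\Hil))}$ lies in $\J_{\nu}$ because $\Vert\mathcal{I}_{\J_{\nu}}\Vert_{\J_{\nu}}=\nu(0)=1$, giving unitality, and the involution defined by (\ref{involution}) maps $\J_{\nu}$ into itself isometrically thanks to the symmetry of $\nu$, with the \emph{*}-axioms inherited via Proposition \ref{adjointmatrix}.

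I expect no genuinely hard step once (\ref{generalv1}) and (\ref{generalv2}) are available: the argument is essentially a transcription of the Jaffard case. The one point that needs care is that passing from the purely summability-type estimates to the assertion that the formal matrix product is the canonical matrix representation of a composition of bounded operators relies on the first half ($\J_{\nu}\subseteq\B(\ell^2(X;\Hil))$) together with Remark \ref{involutioncomposition}; this is why the two halves of the lemma should be proved in that order.
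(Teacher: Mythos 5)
Your proposal is correct and takes essentially the same approach as the paper: the paper omits this proof, stating it is analogous to \cite[Lemma 1 (b)]{groelei06} and to the proof of Lemma \ref{lemmaalgebra}, and your argument is exactly that transcription (pointwise bound from $\Vert\cdot\Vert_{\J_{\nu}}$, symmetry of $\nu$ for row/column sums and the involution, hypothesis (\ref{generalv1}) for the $\MS^1$-embedding, hypothesis (\ref{generalv2}) for closure under multiplication, and Remark \ref{involutioncomposition} to identify matrix multiplication with operator composition). One remark: you implicitly read the right-hand side of (\ref{generalv2}) as $C\,\nu(k-l)^{-1}$ rather than the $C\,\nu(k-l)$ printed in the statement; this subconvolutivity reading, confirmed by Remark \ref{generalvremark}, is indeed the intended one and the only one under which the multiplication estimate closes.
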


\begin{remark}\label{generalvremark}
Let $\nu_s$ be the polynomial weight $\nu_s(x) = (1+\vert x\vert)^s$ on $\Rd$ with decay parameter $s>d$. Then $\nu_s$ is admissible. Furthermore, by Lemma \ref{separatedlemma}, $\nu_s$ satisfies the conditions (\ref{generalv1}) and (\ref{generalv2}). Thus Lemma \ref{lemmaalgebrageneral} can be viewed as a generalization of Lemma \ref{separatedlemma}. Moreover, in case $X=\mathbb{Z}^d$, condition (\ref{generalv1}) is equivalent to $\nu^{-1} \in \ell^1(\mathbb{Z}^d)$ and condition (\ref{generalv2}) reads as $\nu^{-1} \ast \nu^{-1} \leq C \nu^{-1}$ (i.e. $\nu^{-1}$ is \emph{subconvolutive} \cite{Fei79}).
\end{remark}

The norm $\Vert \, . \, \Vert_{\J_{\nu}}$ is not submultiplicative. This obstacle can be circumvented by working with the equivalent norm 
\begin{equation}\label{generalvnorm}
    \vert\Vert A \Vert\vert_{\mathcal{J}_{\nu}} := \sup_{\substack{B\in \mathcal{J}_{\nu} \\ \Vert B \Vert_{\mathcal{J}_{\nu}} = 1}} \Vert A\cdot B \Vert_{\mathcal{J}_{\nu}} \qquad (A \in \mathcal{J}_{\nu}).
\end{equation}

Next, we consider the $\B(\Hil)$-valued version of the Banach algebra $\B_{u,s}$ from \cite{groelei06}. Let $s>0$, $u$ be an admissible weight and $\nu$ be the admissible weight defined by $\nu(x) = u(x)(1+\vert x \vert)^s$. Then we define $\B_{u,s} = \B_{u,s}(X)$ to be the Banach space $\B_{u,s} := \MS_u^1 \cap \J_{\nu}$ equipped with the norm 
\begin{equation}\label{Busnorm}
    \Vert A \Vert_{\B_{u,s}} := 2^s \Vert A \Vert_{\MS_u^1} + \Vert A \Vert_{\J_{\nu}}.
\end{equation}
The constant $2^s$ in (\ref{Busnorm}) stems from the subadditivity of $\nu_s$ on $\mathbb{R}^d$ and plays an important role in the proof of the next lemma.

\begin{lemma}\label{Buslemma}
The Banach space $\B_{u,s}$ is a solid unital Banach *-algebra and $\B_{u,s} \subseteq \MS_u^1 \subseteq \B(\ell^2(X;\Hil))$. Moreover, if $s>d$ then $\J_{\nu} \subseteq \MS_u^1$ and thus $\B_{u,s} = \J_{\nu}$ with equivalent norms.     
\end{lemma}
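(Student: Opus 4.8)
The plan is to verify the four assertions in order, treating the Banach-algebra structure as the substantial part and the remaining claims as consequences of the definitions together with results already established. First I would dispose of the routine structural facts. The inclusion $\MS_u^1 \subseteq \B(\ell^2(X;\Hil))$ is immediate from Lemma~\ref{Schurimpliesbounded} (applied with the admissible weight $u$ and $m\equiv 1$, which is $u$-moderate with constant $1$ since $u\geq 1$), and $\B_{u,s}=\MS_u^1\cap\J_{\nu}\subseteq\MS_u^1$ holds by definition. Completeness of $(\B_{u,s},\Vert\,.\,\Vert_{\B_{u,s}})$ follows from the standard fact that the intersection of two Banach spaces, normed by a fixed positive combination of the two norms, is again a Banach space: a sequence that is Cauchy in $\B_{u,s}$ is Cauchy in both $\MS_u^1$ and $\J_{\nu}$, and the two entrywise limits coincide, giving convergence in $\B_{u,s}$. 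Solidity is clear, since both $\Vert\,.\,\Vert_{\MS_u^1}$ and $\Vert\,.\,\Vert_{\J_{\nu}}$ are monotone in the scalars $\Vert A_{k,l}\Vert$. That the involution from (\ref{involution}) is an isometry on $\B_{u,s}$ reduces to the same statement on each factor: it is isometric on $\MS_u^1$ by the symmetry of $u$ (Proposition~\ref{Schuralgebraproposition}) and on $\J_{\nu}$ by the symmetry of $\nu$, exactly as in Lemma~\ref{lemmaalgebra}. The diagonal identity $\mathcal{I}_{\B(\ell^2(X;\Hil))}$ lies in $\B_{u,s}$ because $u(0)=\nu(0)=1$.

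The main step is submultiplicativity of $\Vert\,.\,\Vert_{\B_{u,s}}$, and here the constant $2^s$ in (\ref{Busnorm}) is essential. Since $\MS_u^1$ is a Banach algebra (Proposition~\ref{Schuralgebraproposition}), only the factor $\J_{\nu}$ requires work. For $A,B\in\B_{u,s}$ and $k,l\in X$ I would estimate $[AB]_{k,l}=\sum_n A_{k,n}B_{n,l}$ entrywise and apply the subadditivity inequality (\ref{subadditivity}) together with the submultiplicativity of $u$, which gives
\[
\nu(k-l)\;\leq\;2^s\big(\nu(k-n)\,u(n-l)+u(k-n)\,\nu(n-l)\big)\qquad(\forall n\in X).
\]
Inserting this into $\Vert[AB]_{k,l}\Vert\,\nu(k-l)$ splits the bound into two sums; bounding the factor carrying $\nu$ by the respective $\J_{\nu}$-norm and the factor carrying $u$ by the respective $\MS_u^1$-norm (using the two defining suprema of the Schur norm) yields
\[
\Vert AB\Vert_{\J_{\nu}}\;\leq\;2^s\big(\Vert A\Vert_{\J_{\nu}}\Vert B\Vert_{\MS_u^1}+\Vert A\Vert_{\MS_u^1}\Vert B\Vert_{\J_{\nu}}\big).
\]
Combining this with $\Vert AB\Vert_{\MS_u^1}\leq\Vert A\Vert_{\MS_u^1}\Vert B\Vert_{\MS_u^1}$ and writing $a_1=\Vert A\Vert_{\MS_u^1}$, $a_2=\Vert A\Vert_{\J_{\nu}}$, and similarly $b_1,b_2$, the claim $\Vert AB\Vert_{\B_{u,s}}\leq\Vert A\Vert_{\B_{u,s}}\Vert B\Vert_{\B_{u,s}}$ reduces to the elementary inequality $(4^s-2^s)a_1b_1+a_2b_2\geq0$, valid for all $s>0$. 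This algebraic bookkeeping is the crux of the lemma: the $2^s$ weighting is precisely what converts the cross terms produced by (\ref{subadditivity}) into a genuine product norm, and it is the only genuine obstacle in the argument.

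Finally, for $s>d$ I would show $\J_{\nu}\subseteq\MS_u^1$. For $A\in\J_{\nu}$ the bound $\Vert A_{k,l}\Vert\leq\Vert A\Vert_{\J_{\nu}}\,u(k-l)^{-1}(1+\vert k-l\vert)^{-s}$ gives
\[
\sum_{l\in X}\Vert A_{k,l}\Vert\,u(k-l)\;\leq\;\Vert A\Vert_{\J_{\nu}}\sum_{l\in X}(1+\vert k-l\vert)^{-s},
\]
and the right-hand sum is bounded uniformly in $k$ by Lemma~\ref{separatedlemma}~(a) since $s>d$; the symmetric sum over $k$ is handled identically. Hence $\Vert A\Vert_{\MS_u^1}\leq C\Vert A\Vert_{\J_{\nu}}$, so $\J_{\nu}\subseteq\MS_u^1$ and therefore $\B_{u,s}=\MS_u^1\cap\J_{\nu}=\J_{\nu}$ as sets. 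Norm equivalence then follows at once from $\Vert A\Vert_{\J_{\nu}}\leq\Vert A\Vert_{\B_{u,s}}\leq(2^sC+1)\Vert A\Vert_{\J_{\nu}}$. Everything beyond the submultiplicativity step is bookkeeping built on Lemma~\ref{Schurimpliesbounded}, Proposition~\ref{Schuralgebraproposition}, and Lemma~\ref{separatedlemma}.
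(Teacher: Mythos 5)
Your proof is correct and takes essentially the route the paper itself prescribes: the paper's proof of Lemma \ref{Buslemma} merely cites \cite[Lemma 2]{groelei06} with ``absolute values replaced by operator norms,'' and your key cross-term estimate $\Vert AB\Vert_{\J_{\nu}} \leq 2^s\big(\Vert A\Vert_{\J_{\nu}}\Vert B\Vert_{\MS_u^1}+\Vert A\Vert_{\MS_u^1}\Vert B\Vert_{\J_{\nu}}\big)$ is exactly the inequality that argument rests on (the paper reproduces it verbatim in the proof of Theorem \ref{Busthm}). The reduction of submultiplicativity to $(4^s-2^s)a_1b_1+a_2b_2\geq 0$ and the use of Lemma \ref{separatedlemma} (a) for the inclusion $\J_{\nu}\subseteq \MS_u^1$ when $s>d$ are both sound, so your write-up simply supplies in full the details the paper delegates to the reference.
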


\begin{proof}
The proof of $\B_{u,s}$ being a Banach *-algebra and the moreover-part is exactly the same as the proof of \cite[Lemma 2]{groelei06} after replacing absolute values with operator norms and applying Lemma \ref{lemmaalgebrageneral} instead of \cite[Lemma 1]{groelei06} (Lemma \ref{separatedlemma} is applied as well). The statement $\B_{u,s} \subseteq \MS_u^1 \subseteq \B(\ell^2(X;\Hil))$ is obviously true.     
\end{proof}

Now we can prove the main theorem of this section. We will establish the proof details (which also appear in \cite{groelei06}) so that the reader can fully grasp the key ideas. 

\begin{theorem}\label{Busthm}
Assume that $u$ is an admissible weight, let $\delta, s >0$, $u\geq \nu_{\delta}$ and set $\nu = u\nu_s$. Then
\begin{equation}\label{Busthmeq}
    r_{\B_{u,s}} (A) = r_{\B(\ell^2(X;\Hil))} (A) \qquad (\forall A=A^* \in \B_{u,s}).
\end{equation}
In particular, $\B_{u,s}$ is inverse-closed in $\B(\ell^2(X;\Hil))$ and $\B_{u,s}$ is a symmetric Banach algebra.
\end{theorem}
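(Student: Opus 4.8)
The plan is to invoke Hulanicki's Lemma (Proposition \ref{Hulanickilemma}). Since $\B := \B(\ell^2(X;\Hil))$ is a symmetric Banach *-algebra and $\B_{u,s}$ is a unital Banach *-algebra with $\B_{u,s} \subseteq \B$ by Lemma \ref{Buslemma}, it suffices to verify the spectral radius inequality $r_{\B_{u,s}}(A) \leq r_{\B}(A)$ for all $A = A^* \in \B_{u,s}$; the reverse inequality is automatic from (\ref{spectrum1}), and once equality of spectral radii is established, both the inverse-closedness and the symmetry of $\B_{u,s}$ follow from Proposition \ref{Hulanickilemma}, which simultaneously yields (\ref{Busthmeq}). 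The crucial reduction is that the hypothesis $u \geq \nu_{\delta}$ forces $u$ to satisfy the weak growth condition of Theorem \ref{Schurspectraltheorem}: indeed $u \geq \nu_{\delta} \geq \nu_{\min\{\delta,1\}}$ with $\min\{\delta,1\} \in (0,1]$, so that $r_{\MS_u^1}(A) = r_{\B}(A)$ for every self-adjoint $A \in \MS_u^1 \supseteq \B_{u,s}$. Hence it is enough to prove $r_{\B_{u,s}}(A) \leq r_{\MS_u^1}(A)$.

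To this end, I would first establish a Brandenburg-type estimate of the form
\begin{equation*}
\Vert A^2 \Vert_{\B_{u,s}} \leq C \Vert A \Vert_{\MS_u^1} \Vert A \Vert_{\B_{u,s}} \qquad (\forall A \in \B_{u,s})
\end{equation*}
for some constant $C>0$ depending only on $s$. For the $\MS_u^1$-part this is immediate from submultiplicativity (Proposition \ref{Schuralgebraproposition}), giving $\Vert A^2 \Vert_{\MS_u^1} \leq \Vert A \Vert_{\MS_u^1}^2$. For the $\J_{\nu}$-part one estimates $\Vert [A^2]_{k,l} \Vert\, \nu(k-l)$: writing $\nu = u\nu_s$, the submultiplicativity of $u$ together with the subadditivity (\ref{subadditivity}) of $\nu_s$ yields $\nu(k-l) < 2^s u(k-n)u(n-l)(\nu_s(k-n) + \nu_s(n-l))$, and splitting the resulting sum over $n$ into its two summands lets one factor each term as a $\J_{\nu}$-norm times an $\MS_u^1$-norm, producing $\Vert A^2 \Vert_{\J_{\nu}} \leq 2^{s+1} \Vert A \Vert_{\J_{\nu}} \Vert A \Vert_{\MS_u^1}$. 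Combining the two bounds with the elementary estimates $2^s \Vert A \Vert_{\MS_u^1} \leq \Vert A \Vert_{\B_{u,s}}$ and $\Vert A \Vert_{\J_{\nu}} \leq \Vert A \Vert_{\B_{u,s}}$ then gives the claimed inequality with $C = 1 + 2^{s+1}$.

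With this estimate in hand, I would run Brandenburg's trick \cite{BRANDENBURG75} exactly as in the proof of Theorem \ref{Jaffardinverseclosed}: applying the inequality to $A^n$ gives $\Vert A^{2n} \Vert_{\B_{u,s}} \leq C \Vert A^n \Vert_{\MS_u^1} \Vert A^n \Vert_{\B_{u,s}}$, and taking $2n$-th roots, letting $n\to\infty$, and using Gelfand's formula (\ref{Gelfand}) yields $r_{\B_{u,s}}(A) \leq r_{\MS_u^1}(A)^{1/2} r_{\B_{u,s}}(A)^{1/2}$, which rearranges to $r_{\B_{u,s}}(A) \leq r_{\MS_u^1}(A)$. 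Chaining this with $r_{\MS_u^1}(A) = r_{\B}(A)$ from Theorem \ref{Schurspectraltheorem} completes the spectral radius inequality, after which Hulanicki's Lemma finishes the proof. I expect the only genuine work to be the derivation of the Brandenburg estimate — specifically, correctly combining the submultiplicativity of $u$ with the subadditivity of $\nu_s$ in the split of the off-diagonal sum — while the trick itself and the appeal to the already-proven Schur-algebra result are routine.
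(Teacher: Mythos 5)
Your proof is correct and follows essentially the same route as the paper: the mixed Brandenburg-type estimate $\Vert A^{2n}\Vert_{\B_{u,s}} \leq C\,\Vert A^n\Vert_{\MS_u^1}\Vert A^n\Vert_{\B_{u,s}}$ derived from the submultiplicativity of $u$ and the subadditivity (\ref{subadditivity}) of $\nu_s$, followed by Gelfand's formula, reduction to the Schur algebra, and Hulanicki's Lemma. One point in your favor: to obtain $r_{\MS_u^1}(A)=r_{\B(\ell^2(X;\Hil))}(A)$ you invoke Theorem \ref{Schurspectraltheorem} with the weak-growth exponent $\min\{\delta,1\}$, whereas the paper cites Theorem \ref{thm47} together with the inclusion $\MS_u^1\subseteq\MS^1_{\nu_\delta}$, which by (\ref{spectrum1}) only bounds $r_{\MS_u^1}(A)$ from below by $r_{\MS^1_{\nu_\delta}}(A)$ --- so your reference is the one that actually closes the argument.
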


\begin{proof}
Once again, Hulanicki's Lemma (Proposition \ref{Hulanickilemma}) is the key for proving spectral invariance. The crucial idea for  deriving the desired inequality of spectral radii is a proof routine called Brandenburg's trick \cite{BRANDENBURG75}.  

By submultiplicativity of $u$ and subadditivity of $\nu_s$, the weight $\nu = u\nu_s$ satisfies the inequality 
$$\nu(x+y) \leq 2^s u(x) u(y) (\nu_s(x) + \nu_s(y)) = 2^s \left( \nu(x)u(y) + u(x)\nu(y) \right)$$
on $\mathbb{R}^d$. This implies that for any $A,B\in \B_{u,s} \subseteq \J_{\nu}$
\begin{flalign}
\Vert AB \Vert_{\J_{\nu}} &= \sup_{k,l\in X} \left\Vert \sum_{n\in X} A_{k,n} B_{n,l} \right\Vert \nu(k-n+n-l) \notag \\
&\leq 2^s \sup_{k,l\in X} \sum_{n\in X} \Vert A_{k,n} \Vert \Vert B_{n,l} \Vert \big( \nu(k-n)u(n-l) + u(k-n)\nu(n-l) \big) \notag \\
&\leq 2^s ( \Vert A \Vert_{\J_{\nu}} \Vert B \Vert_{\MS_{u}^1} + \Vert B \Vert_{\J_{\nu}} \Vert A \Vert_{\MS_{u}^1}). \notag
\end{flalign}
Consequently, 
$$\Vert A^{2n} \Vert_{\J_{\nu}} \leq 2^{s+1} \Vert A^{n} \Vert_{\J_{\nu}} \Vert A^{n} \Vert_{\MS_{u}^1} \qquad (\forall n\in \mathbb{N}).$$
In particular, this yields the estimate
\begin{flalign}
\Vert A^{2n} \Vert_{\B_{u,s}} &= 2^s \Vert A^{2n} \Vert_{\MS_u^1} + \Vert A^{2n} \Vert_{\J_{\nu}} \notag \\
&\leq 2^s \Vert A^{n} \Vert^2_{\MS_u^1} + 2^{s+1} \Vert A^{n} \Vert_{\J_{\nu}} \Vert A^{n} \Vert_{\MS_{u}^1} \notag \\
&\leq 2^{s+1} \Vert A^{n} \Vert_{\MS_u^1} (2^s \Vert A^{n} \Vert_{\MS_u^1} + \Vert A^{n} \Vert_{\J_{\nu}}) \notag \\
&= 2^{s+1} \Vert A^{n} \Vert_{\MS_u^1} \Vert A^{n} \Vert_{\B_{u,s}} \qquad (\forall n\in \mathbb{N}).\notag
\end{flalign}
Taking $2n$-th roots and letting $n\rightarrow \infty$ yields via Gelfand's formula that 
$$r_{\B_{u,s}} (A) \leq \lim_{n\rightarrow \infty} 2^{\frac{s+1}{2n}} \Vert A^{n} \Vert_{\MS_u^1}^{\frac{1}{2n}} \Vert A^{n} \Vert_{\B_{u,s}}^{\frac{1}{2n}} = r_{\MS_u^1}(A)^{\frac{1}{2}}r_{\B_{u,s}}(A)^{\frac{1}{2}}$$
and consequently 
$$r_{\B_{u,s}} (A) \leq r_{\MS_u^1}(A) \qquad (\forall A\in \B_{u,s}).$$
In particular, since $\MS_u^1 \subseteq \MS^1_{\nu_{\delta}}$, we can apply Theorem \ref{thm47} and obtain that
$$r_{\B_{u,s}} (A) \leq r_{\MS_u^1}(A) = r_{\MS^1}(A) =  r_{\B(\ell^2(X;\Hil))} (A) \qquad (\forall A=A^* \in \B_{u,s}).$$
Now, an application of Hulanicki's lemma (Proposition \ref{Hulanickilemma}) yields the desired statement.
\end{proof}

Finally, we consider the polynomial weight $\nu_s$ for $s>d$ and call $\J_s := \J_{\nu_s}$ the \emph{Jaffard algebra}. By Remark \ref{generalvremark}, and Lemma \ref{Buslemma}, the Jaffard algebra is (up to norm-equivalence via (\ref{generalvnorm})) a unital Banach *-algebra whenever $s>d$. By combining Lemma \ref{Buslemma} with Theorem \ref{Busthm}, we obtain the following generalization of Jaffard's Lemma \cite{ja90} to the $\B(\Hil)$-valued setting.   

\begin{corollary}[Jaffard's Lemma]\label{Jaffardinverseclosed}
For every $s>d$, the Jaffard algebra $({\J}_{s}, \vert\Vert \, . \, \Vert\vert_{\J_{s}})$ is inverse-closed in $\B(\ell^2(X;\Hil))$. In particular, $({\J}_{s}, \vert\Vert \, . \, \Vert\vert_{\J_{s}})$ is a symmetric Banach algebra whenever $s>d$.    
\end{corollary}

%%%%%%%%%%%%%%%%%%%%%%%%%%%%%%%%%%%%%%%%%%%%%%%%%%%%%%%

\section{The Baskakov-Gohberg-Sjöstrand algebra}\label{The Baskakov-Gohberg-Sjöstrand algebra}

Next we consider the $\B(\Hil)$-valued \emph{Baskakov-Gohberg-Sjöstrand algebra} and weighted versions of it \cite{Baskakov1990,Gohberg89,Sjöstrand1994-1995}. This time we work with the relatively separated index set $X=\mathbb{Z}^d$. Note that some of the results appearing in this section have also been stated in \cite{kri11}. Therefore, we will avoid some of the proof details and refer to \cite{kri11} and other related references instead. 

For a weight function $\nu$ on $\mathbb{R}^d$, let $\C_{\nu}$ be the space of $\B(\Hil)$-valued matrices $A=[A_{k,l}]_{k,l\in \mathbb{Z}^d}$ for which
\begin{equation}\label{Cvnorm}
    \Vert A \Vert_{\C_{\nu}}:= \sum_{l\in \mathbb{Z}^d} \sup_{k\in \mathbb{Z}^d} \Vert A_{k,k-l} \Vert \nu(l) <\infty .
\end{equation}
It is immediately clear, that (\ref{Cvnorm}) defines indeed a norm on $\C_{\nu}$. In fact
$$\Vert A \Vert_{\C_{\nu}} = \Big\Vert \Big( \big\Vert (\Vert A_{k,k-l} \Vert)_{k\in \mathbb{Z}^d}\big\Vert_{\ell^{\infty}(\mathbb{Z}^d)}\Big)_{l\in \mathbb{Z}^d} \Big\Vert_{\ell^1_{\nu}(\mathbb{Z}^d)},$$
hence $\C_{\nu}$ is isometrically isomorphic to the Bochner space $\ell^1_{\nu}(\mathbb{Z}^d; \ell^{\infty}(\mathbb{Z}^d; \B(\Hil)))$ and thus a Banach space.

The following notation will be useful for us:
For $A = [A_{k,l}]_{k,l\in \mathbb{Z}^d}\in \C_{\nu}$, let $d_A(l)$ be the supremum of the operator norms of the entries of $A$ along its $l$-th side diagonal, that is, 
\begin{equation}\label{d_A}
    d_A(l) = \sup_{k\in \mathbb{Z}^d} \Vert A_{k,k-l} \Vert \qquad (l\in \mathbb{Z}^d).
\end{equation}
The supremum $d_A(l)$ exists for each $l$ and, in fact, $\Vert d_A\Vert_{\ell_{\nu}^1(\mathbb{Z}^d)} = \Vert A \Vert_{\C_{\nu}}$. Furthermore, as we are in a Hilbert space setting (see also Subsection \ref{Operator valued matrices} and \cite[Theorem 4.16]{Maddox:101881}), $d_A(l)$ is the operator-norm of the $n$-th side-diagonal $D_A(n)$ of $A$, given by 
\begin{equation}\label{sidediagonal}
[D_A(n)]_{k,l} = 
\begin{cases}
A_{k,l} \qquad &\text{ if }l = k-n\\
0 \qquad &\text{ if }l \neq k-n
\end{cases}.
\end{equation}

\begin{proposition}\label{Cvembedding}
Let $m$ be a $\nu$-moderate weight. Then each $A\in \C_{\nu}$ defines a bounded operator on $\B(\ell_m^p(X;\Hil))$ for each $1\leq p \leq \infty$. 
\end{proposition}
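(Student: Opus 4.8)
The plan is to exploit that every matrix in $\C_{\nu}$ is dominated, entrywise, by a scalar convolution operator, so that boundedness on $\ell_m^p(\mathbb{Z}^d;\Hil)$ reduces to a weighted Young inequality on $\mathbb{Z}^d$. Concretely, the goal is to show $A\in\B(\ell_m^p(\mathbb{Z}^d;\Hil))$ with $\Vert A\Vert_{\B(\ell_m^p(\mathbb{Z}^d;\Hil))}\leq C\Vert A\Vert_{\C_{\nu}}$, where $C$ is the constant from $m$ being $\nu$-moderate.

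First I would record the pointwise domination. Since the entry $A_{k,l}$ lies on the $(k-l)$-th side diagonal, (\ref{d_A}) gives $\Vert A_{k,l}\Vert\leq d_A(k-l)$ for all $k,l\in\mathbb{Z}^d$. Hence, for any $f=(f_l)_{l\in\mathbb{Z}^d}\in\ell_m^p(\mathbb{Z}^d;\Hil)$, writing $g_l=\Vert f_l\Vert_{\Hil}$, one has $\sum_{l}\Vert A_{k,l}\Vert\,\Vert f_l\Vert\leq\sum_{l}d_A(k-l)\,g_l=(d_A\ast g)(k)$. This simultaneously shows that the defining series $(Af)_k=\sum_l A_{k,l}f_l$ converges absolutely in $\Hil$ (so that $Af$ is well-defined) and yields the scalar bound $\Vert (Af)_k\Vert_{\Hil}\leq (d_A\ast g)(k)$ for every $k$, and therefore $\Vert Af\Vert_{\ell_m^p(\mathbb{Z}^d;\Hil)}\leq\Vert d_A\ast g\Vert_{\ell_m^p(\mathbb{Z}^d)}$.

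Next I would remove the weight. Using that $m$ is $\nu$-moderate, i.e. $m(k)\leq C\,m(k-l)\nu(l)$, I would estimate
\[
(d_A\ast g)(k)\,m(k)\;\leq\;C\sum_{l\in\mathbb{Z}^d} d_A(l)\nu(l)\,g(k-l)\,m(k-l)\;=\;C\,(\tilde d_A\ast G)(k),
\]
where $\tilde d_A(l)=d_A(l)\nu(l)$ and $G(k)=g(k)m(k)$. Since $\Vert\tilde d_A\Vert_{\ell^1(\mathbb{Z}^d)}=\Vert d_A\Vert_{\ell^1_{\nu}(\mathbb{Z}^d)}=\Vert A\Vert_{\C_{\nu}}$ and $\Vert G\Vert_{\ell^p(\mathbb{Z}^d)}=\Vert g\Vert_{\ell^p_m(\mathbb{Z}^d)}=\Vert f\Vert_{\ell^p_m(\mathbb{Z}^d;\Hil)}$, the ordinary Young inequality $\Vert\tilde d_A\ast G\Vert_{\ell^p}\leq\Vert\tilde d_A\Vert_{\ell^1}\Vert G\Vert_{\ell^p}$, valid for all $1\leq p\leq\infty$, gives
\[
\Vert Af\Vert_{\ell_m^p(\mathbb{Z}^d;\Hil)}\;\leq\;\Vert (d_A\ast g)\,m\Vert_{\ell^p(\mathbb{Z}^d)}\;\leq\;C\,\Vert A\Vert_{\C_{\nu}}\,\Vert f\Vert_{\ell_m^p(\mathbb{Z}^d;\Hil)},
\]
which is the desired conclusion.

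As an alternative consistent with the style of the earlier sections, one could deduce the endpoint cases $p=1$ and $p=\infty$ directly from the convolution bound and then invoke Riesz--Thorin interpolation (Proposition \ref{Riesz-Thorin}) for $1<p<\infty$, exactly as in Lemma \ref{Schurimpliesbounded}. I do not expect a genuine obstacle here: the argument is a routine weighted Young estimate. The only two points requiring a line of care are the absolute convergence of the defining series (which is handled by the entrywise domination above) and the correct bookkeeping of the $\nu$-moderateness constant $C$ when passing from the weighted convolution $d_A\ast g$ to the unweighted convolution $\tilde d_A\ast G$.
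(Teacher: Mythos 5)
Your proof is correct and follows essentially the same route as the paper: the entrywise domination $\Vert (Af)_k\Vert_{\Hil}\leq (d_A\ast g)(k)$, where $g_l=\Vert f_l\Vert_{\Hil}$, followed by Young's inequality for weighted $\ell^p$-spaces. The only cosmetic difference is that you derive the weighted Young inequality from the unweighted one via the $\nu$-moderateness of $m$, whereas the paper simply cites it from the literature.
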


\begin{proof}
We follow the proof steps of the exposition \cite{gr10-2}. For $g = (g_l)_{l\in \mathbb{Z}^d} \in \ell_m^p(X;\Hil)$ we have $c=(c_l)_{l\in\mathbb{Z}^d} := (\Vert g_l \Vert)_{l\in \mathbb{Z}^d} \in \ell_m^p(\mathbb{Z}^d)$. With the notation from (\ref{d_A}) we obtain the component-wise estimate   
\begin{flalign}
 \Vert [Ag]_k \Vert &\leq \sum_{l\in \mathbb{Z}^d} \Vert A_{k,l} \Vert \, \Vert g_l \Vert \notag \\
 &\leq \sum_{l\in \mathbb{Z}^d} d_A(k-l) c_l \notag \\
 &= (d_A \ast c) (k) \qquad (\forall k\in \mathbb{Z}^d). \notag
\end{flalign}
Therefore, the claim follows from Young's inequality for weighted $\ell^p$-spaces (see e.g. \cite{gr01}), as we have 
\begin{flalign}
  \Vert Ag \Vert_{\ell_m^p(X;\Hil)} &= \Vert (\Vert [Ag]_k \Vert)_{k\in \mathbb{Z}^d} \Vert_{\ell_m^p(\mathbb{Z}^d)} \notag \\
  &\leq \Vert d_A \ast c \Vert_{\ell_m^p(\mathbb{Z}^d)} \notag \\
  &\leq C \Vert d_A \Vert_{\ell_{\nu}^1(\mathbb{Z}^d)} \Vert c \Vert_{\ell_m^p(\mathbb{Z}^d)} = C \Vert A \Vert_{\C_{\nu}} \Vert g \Vert_{\ell_m^p(X;\Hil)}, \notag  
\end{flalign}
where the constant $C$ stems from the assumption that $m$ is $\nu$-moderate.
\end{proof}

By the above we may call the Baskakov-Gohberg-Sjöstrand algebra the \emph{algebra of convolution-dominated matrices} \cite{gr10-2}.

\begin{proposition}\label{Cvalgebra}\cite{kri11}
For any submultiplicative and symmetric weight $\nu$, $\C_{\nu}$ is a unital Banach *-algebra with respect to matrix multiplication and involution defined as in (\ref{involution}). In particular, $\C_{\nu} \subseteq \B(\ell^2(X;\Hil))$.
\end{proposition}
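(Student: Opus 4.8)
The plan is to exploit the identification $\Vert A \Vert_{\C_{\nu}} = \Vert d_A \Vert_{\ell^1_{\nu}(\mathbb{Z}^d)}$ already recorded after (\ref{d_A}), which reduces every assertion about $\C_{\nu}$ to a statement about the nonnegative side-diagonal sequences $d_A$ living in the scalar weighted space $\ell^1_{\nu}(\mathbb{Z}^d)$. Since $\C_{\nu}$ is already known to be a Banach space, it remains only to check that matrix multiplication is well-defined and submultiplicative, that the identity matrix lies in $\C_{\nu}$, and that the involution of (\ref{involution}) is an isometric conjugate-linear anti-homomorphism. First I would note that $m \equiv 1$ is $\nu$-moderate (indeed $\nu \geq 1$, being submultiplicative and symmetric), so Proposition \ref{Cvembedding} gives $\C_{\nu} \subseteq \B(\ell^2(X;\Hil))$; this is exactly what is needed to make sense of products and adjoints via the matrix calculus of Subsection \ref{Operator valued matrices}, and in particular Remark \ref{involutioncomposition} guarantees that for $A,B \in \C_{\nu}$ the entries $[AB]_{k,l} = \sum_{n} A_{k,n} B_{n,l}$ converge in operator norm.

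The decisive step is the pointwise diagonal estimate $d_{AB} \leq d_A \ast d_B$. To obtain it I would fix $k,l$ and reindex the defining sum of $[AB]_{k,k-l}$ by $n = k-m$: then $\Vert A_{k,n}\Vert = \Vert A_{k,k-m}\Vert \leq d_A(m)$ and $\Vert B_{n,k-l}\Vert = \Vert B_{k-m,(k-m)-(l-m)}\Vert \leq d_B(l-m)$, so $\Vert [AB]_{k,k-l}\Vert \leq \sum_m d_A(m)\,d_B(l-m) = (d_A \ast d_B)(l)$; taking the supremum over $k$ yields the claim. Submultiplicativity of the norm then follows from the weighted Young inequality $\Vert f \ast g \Vert_{\ell^1_{\nu}} \leq \Vert f \Vert_{\ell^1_{\nu}} \Vert g \Vert_{\ell^1_{\nu}}$, which is valid precisely because $\nu$ is submultiplicative (so $\nu(l) \leq \nu(m)\nu(l-m)$), giving $\Vert AB \Vert_{\C_{\nu}} = \Vert d_{AB}\Vert_{\ell^1_{\nu}} \leq \Vert d_A \ast d_B\Vert_{\ell^1_{\nu}} \leq \Vert A\Vert_{\C_{\nu}}\Vert B\Vert_{\C_{\nu}}$. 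In particular all products stay in $\C_{\nu}$, so $\C_{\nu}$ is a subalgebra.

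For the remaining structure I would observe that the identity $\mathcal{I} = \text{diag}[\mathcal{I}_{\B(\Hil)}]$ has side-diagonal sequence $d_{\mathcal{I}} = \delta_0$, whence $\Vert \mathcal{I}\Vert_{\C_{\nu}} = \nu(0) < \infty$ and $\mathcal{I} \in \C_{\nu}$. For the involution of (\ref{involution}), Proposition \ref{adjointmatrix} gives $(A^*)_{k,l} = (A_{l,k})^*$, and reindexing along diagonals yields $d_{A^*}(l) = \sup_k \Vert A_{k-l,k}\Vert = d_A(-l)$; hence $\Vert A^* \Vert_{\C_{\nu}} = \sum_l d_A(-l)\nu(l) = \sum_l d_A(l)\nu(-l) = \Vert A\Vert_{\C_{\nu}}$ by the symmetry of $\nu$, so the involution is an isometry. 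The algebraic identities $(A^*)^* = A$ and $(AB)^* = B^* A^*$, as well as conjugate-linearity, are inherited from the operator adjoint on $\B(\ell^2(X;\Hil))$ via the embedding, exactly as in the proof of Lemma \ref{lemmaalgebra}.

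The computational heart $-$ and the only place demanding care $-$ is the reindexing that produces $d_{AB} \leq d_A \ast d_B$ and $d_{A^*}(l) = d_A(-l)$; once these two diagonal identities are in place, submultiplicativity and the isometry property of the involution are immediate consequences of the submultiplicativity and symmetry of $\nu$, respectively, and everything else is inherited from the ambient algebra $\B(\ell^2(X;\Hil))$. I do not expect any genuine obstacle beyond bookkeeping, which is consistent with the paper's stated intention to only outline the proof and defer details to \cite{kri11}.
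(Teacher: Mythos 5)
Your proposal is correct and follows essentially the same route as the paper's proof: the pointwise diagonal estimate $d_{AB} \leq d_A \ast d_B$ combined with weighted Young's inequality for submultiplicativity, the identity $d_{A^*}(l) = d_A(-l)$ together with symmetry of $\nu$ for the isometry of the involution, and Proposition \ref{Cvembedding} (with $m \equiv 1$, which is $\nu$-moderate since $\nu \geq 1$) for the embedding into $\B(\ell^2(X;\Hil))$. The only cosmetic difference is that you conclude $\Vert \mathcal{I} \Vert_{\C_{\nu}} = \nu(0) < \infty$ where the paper asserts $\nu(0)=1$; your version is actually the safer statement, since submultiplicativity and symmetry alone only force $\nu(0) \geq 1$.
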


\begin{proof}
Again we follow the proof steps of the exposition \cite{gr10-2}; the verification of $\C_{\nu}$ being a Banach algebra has already been established in \cite[Lemma 2]{Baskakov1997}.

We have already observed that $\C_{\nu}$ is a Banach space. Let $A,B \in \C_{\nu}$. Then, with the terminology from (\ref{d_A}) we have that
\begin{flalign}
    \Vert [AB]_{k,k-l} \Vert &\leq \sum_{m\in \mathbb{Z}^d} \Vert A_{k,m} \Vert \, \Vert B_{m,k-l} \Vert \notag \\
    &\leq \sum_{m\in \mathbb{Z}^d} d_A(k-m) \, d_B(l-(k-m)) \notag \\
    &= (d_A \ast d_B)(l), \notag
\end{flalign}
which yields the pointwise estimate 
$$d_{AB} (l) \leq (d_A \ast d_B)(l) \qquad (\forall l\in \mathbb{Z}^d).$$
Thus, since $\nu$ is submultiplicative, Young's inequality yields
\begin{flalign}
\Vert AB \Vert_{\C_{\nu}} &= \Vert d_{AB} \Vert_{\ell_{\nu}^1(\mathbb{Z}^d)} \notag \\
&\leq \Vert d_A \ast d_B \Vert_{\ell_{\nu}^1(\mathbb{Z}^d)} \notag \\
&\leq \Vert d_A \Vert_{\ell_{\nu}^1(\mathbb{Z}^d)} \Vert d_{B} \Vert_{\ell_{\nu}^1(\mathbb{Z}^d)}  = \Vert A \Vert_{\C_{\nu}} \Vert B \Vert_{\C_{\nu}}, \notag
\end{flalign}
hence $\C_{\nu}$ is a Banach algebra. The fact that the involution is an isometry follows from the symmetry of $\nu$. Indeed, since 
$$d_{A^*}(l) = \sup_{k\in \mathbb{Z}^d} \Vert [A^*]_{k,k-l} \Vert = \sup_{k\in \mathbb{Z}^d} \Vert (A_{k-l,k})^* \Vert = \sup_{k\in \mathbb{Z}^d} \Vert A_{k,k+l} \Vert = d_A(-l)$$
we see that
\begin{flalign}
\Vert A^* \Vert_{\C_{\nu}} &= \sum_{l\in \mathbb{Z}^d} d_{A^*}(l)v(l) \notag \\
&= \sum_{l\in \mathbb{Z}^d} d_{A}(-l)v(l) \notag \\
&= \sum_{l\in \mathbb{Z}^d} d_{A}(-l)v(-l) = \Vert A \Vert_{\C_{\nu}}. \notag 
\end{flalign}
Since $v(0)=1$, $\Vert \mathcal{I}_{\B(\ell^2(X;\Hil))}\Vert_{\C_{\nu}} =1$, hence $\C_{\nu}$ has an identity. The fact that $\C_{\nu} \subseteq \B(\ell^2(X;\Hil))$ follows from Proposition \ref{Cvembedding}.
\end{proof}

As before, verifying that $\C_{\nu} \subseteq \B(\ell^2(X;\Hil))$ forms a Wiener pair is a subtle task. However, since this has already been done by Baskakov \cite{Baskakov1997} in 1997, we will only outline his key arguments adapted to our setting (compare with \cite{kri11} or the proof outline in \cite{gr10-2} of the scalar-valued case).

At the heart of Baskakov's proof \cite{Baskakov1997} lies a generalization of the classical Wiener's Lemma \cite{Wiener32} to its analogue on absolutely convergent Fourier series with coefficients in a (possibly non-commutative) Banach algebra by Bochner and Phillips \cite[Theorem 1]{BochnerPhillips42}. 

Given $t \in \mathbb{R}^d$, let $M_t \in \B(\ell^2(X;\Hil))$ be the \emph{modulation operator} defined as diagonal matrix 
$$M_t = \text{diag}[e^{2\pi i k\cdot t} \mathcal{I}_{\B(\Hil)}]_{k\in \mathbb{Z}^d}.$$
Then $M_t$ acts on $(g_l)_{l\in \mathbb{Z}^d} \in \ell^2(X;\Hil)$ via $M_t (g_l)_{l\in \mathbb{Z}^d} = (e^{2\pi i l\cdot t} g_l)_{l\in \mathbb{Z}^d}$ and 
$$M:\mathbb{T}^d \longrightarrow \B(\ell^2(X;\Hil)), \quad t\mapsto M_t$$
is a unitary representation of the $d$-dimensional torus $\mathbb{T}^d$, which is the dual group of the abelian group $\mathbb{Z}^d$ $-$ our underlying index set. Then, to each $A = [A_{k,l}]_{k,l\in \mathbb{Z}^d} \in \B(\ell^2(X;\Hil))$, we assign the $\B(\ell^2(X;\Hil))$-valued $1$-periodic function 
$$f_A(t) := M_t A M_{-t} \qquad (t\in \mathbb{R}^d),$$
i.e. we may identify $f_A$ with a $\B(\ell^2(X;\Hil))$-valued function on $\mathbb{T}^d$. Note that the canonical matrix representation of each operator $f_A(t)$ is given by 
\begin{equation}\label{matrixrepr}
    \mathbb{M}(f_A(t)) = [A_{k,l} e^{2\pi i (k-l)\cdot t}]_{k,l\in \mathbb{Z}^d}.
\end{equation}
Now, if $A$ is contained in the unweighted Baskakov-Gohberg-Sjöstrand algebra $\C = \C_1$, then 
\begin{equation}\label{opFourerseries}
    f_A(t) = \sum_{n\in \mathbb{Z}^d} D_A(n) e^{2\pi i n\cdot t}
\end{equation}
admits a Fourier series of operators, where $D_A(n)$ is the $n$-th side-diagonal of $A$ is defined in (\ref{sidediagonal}), and the Fourier series (\ref{opFourerseries}) converges absolutely in the operator norm, since 
\begin{equation}\label{absconv}
\Vert f_A(t) \Vert_{\B(\ell^2(X;\Hil))} \leq \sum_{n\in \mathbb{Z}^d} \Vert D_A(n) \Vert_{\B(\ell^2(X;\Hil))} = \sum_{n\in \mathbb{Z}^d} d_A(n) = \Vert A \Vert_{\C}.
\end{equation}
Note that the expansion (\ref{opFourerseries}) can be deduced directly from the matrix representation (\ref{matrixrepr}) of $f_A$. 
Now assume that $A\in \C$ is invertible in $\B(\ell^2(X;\Hil))$. Then each operator $f_A(t)$ is invertible in $\B(\ell^2(X;\Hil))$ with inverse given by $f_A(t)^{-1} = f_{A^{-1}}(t) = M_t A^{-1} M_{-t}$. Now Bochner's and Phillips' theorem \cite[Theorem 1]{BochnerPhillips42} is applicable and guarantees that $f_{A^{-1}}(t)$ admits an absolutely convergent operator-valued Fourier series for each $t \in \mathbb{T}^d$, whose coefficients are precisely given by the side-diagonals $D_{A^{-1}}(n)$ of $A^{-1}$. Consequently (compare with (\ref{absconv})), $A^{-1} \in \C$ and thus $\C$ is inverse-closed in $\B(\ell^2(X;\Hil))$.

In the weighted case $\C_{\nu}$ the proof idea is essentially the same. For the inverse-closedness of $\C_{\nu}$ in $\B(\ell^2(X;\Hil))$ to hold true, $\nu$ needs to satisfy the GRS-condition, since then the Banach algebra $L_{\nu}(\mathbb{Z}^d)$ as defined in \cite{Baskakov1997} is semi-simple. Instead of the Bochner-Phillips theorem from \cite{BochnerPhillips42}, a more abstractly formulated version, namely \cite[Theorem 1]{Baskakov1997}, is applied. 

We refer the reader to \cite{Baskakov1997} as well as \cite{kri11} for more details; compare also with \cite{Baskakov1990}. 

\begin{theorem}\label{Cvinverseclosed}\cite{Baskakov1997,kri11}
Let $\nu$ be a submultiplicative and symmetric weight satisfying the GRS-condition. Then $\C_{\nu} \subseteq \B(\ell^2(\mathbb{Z}^d;\Hil))$ forms a Wiener pair. In particular, $\C_{\nu}$ is a symmetric Banach algebra.
\end{theorem}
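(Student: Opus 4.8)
The plan is to convert the matrix problem into a question about operator-valued Fourier series and then to invoke a non-commutative Wiener lemma. First I would make precise the identification already sketched above: the assignment $A \mapsto f_A$, $f_A(t) = M_t A M_{-t}$, sends $\C_\nu$ into the space of $\B(\ell^2(\mathbb{Z}^d;\Hil))$-valued functions on $\mathbb{T}^d$ whose operator-valued Fourier coefficients are exactly the side-diagonals $D_A(n)$. Since $\Vert D_A(n)\Vert_{\B(\ell^2(\mathbb{Z}^d;\Hil))} = d_A(n)$ (see (\ref{absconv})), the membership $A \in \C_\nu$ is equivalent to the $\nu$-weighted absolute convergence $\sum_{n\in \mathbb{Z}^d} \Vert D_A(n)\Vert_{\B(\ell^2(\mathbb{Z}^d;\Hil))}\, \nu(n) = \Vert A\Vert_{\C_\nu} < \infty$. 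Because the operator-valued Fourier coefficients of a product $f_A f_B$ are the convolution $\sum_{m\in \mathbb{Z}^d} D_A(m) D_B(n-m) = D_{AB}(n)$ of the coefficient sequences, matching matrix multiplication (cf. the proof of Proposition \ref{Cvalgebra}), this identification is an isometric algebra isomorphism of $\C_\nu$ onto the weighted Wiener-type algebra $\mathcal{A}_\nu$ of absolutely convergent operator-valued Fourier series with coefficients in $\B(\ell^2(\mathbb{Z}^d;\Hil))$.

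Next I would reduce inverse-closedness to a statement about $\mathcal{A}_\nu$. Assume $A\in \C_\nu$ is invertible in $\B := \B(\ell^2(\mathbb{Z}^d;\Hil))$. Since each $M_t$ is unitary, $f_A(t) = M_t A M_{-t}$ is invertible in the $C^*$-algebra $\B$ for every $t\in \mathbb{T}^d$, with pointwise inverse $f_A(t)^{-1} = M_t A^{-1} M_{-t} = f_{A^{-1}}(t)$. Thus $f_A$ is an element of $\mathcal{A}_\nu$ whose values are everywhere invertible in $\B$, and the candidate inverse function $f_{A^{-1}}$ is already continuous on $\mathbb{T}^d$. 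The goal becomes to show that $f_{A^{-1}}$ lies back in $\mathcal{A}_\nu$, i.e.\ that its Fourier coefficients $D_{A^{-1}}(n)$ are $\nu$-absolutely summable; translated through the isomorphism above this is exactly the assertion $A^{-1}\in \C_\nu$.

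The decisive step is the non-commutative, weighted Wiener lemma \cite[Theorem 1]{Baskakov1997} (in the unweighted case $\nu\equiv 1$ one may instead invoke the Bochner-Phillips theorem \cite[Theorem 1]{BochnerPhillips42}), which asserts precisely that a $\B$-valued function with $\nu$-absolutely convergent Fourier series whose values are pointwise invertible in $\B$ has an inverse with $\nu$-absolutely convergent Fourier series, provided the scalar weighted convolution algebra $L_\nu(\mathbb{Z}^d)$ of \cite{Baskakov1997} is semisimple. This is where all three hypotheses on $\nu$ enter: submultiplicativity makes $\ell^1_\nu(\mathbb{Z}^d)$ a convolution Banach algebra and $\C_\nu$ a Banach algebra; symmetry yields the isometric involution (Proposition \ref{Cvalgebra}); and the GRS-condition is exactly what guarantees the semisimplicity of $L_\nu(\mathbb{Z}^d)$, and thereby the applicability of Baskakov's theorem. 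Applying it to $f_A$ gives $f_{A^{-1}}\in \mathcal{A}_\nu$, whence $A^{-1}\in \C_\nu$ and $\C_\nu\subseteq \B$ is a Wiener pair.

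Finally, symmetry of $\C_\nu$ comes for free. By Proposition \ref{Cvalgebra}, $\C_\nu\subseteq \B$ is a pair of unital Banach *-algebras with common identity and $\B$ is symmetric; hence the \emph{moreover} clause of Hulanicki's Lemma (Proposition \ref{Hulanickilemma}) applies and, once inverse-closedness is known, $\C_\nu$ is automatically symmetric. I expect the main obstacle to be the decisive step itself: correctly formulating and invoking the non-commutative weighted Wiener lemma of Baskakov. The delicate point is that the passage from the classical commutative Wiener's Lemma to operator-valued Fourier coefficients hinges on the semisimplicity of $L_\nu(\mathbb{Z}^d)$, which is precisely what the GRS-condition secures; pinning this down in the form one actually needs is where the real work lies.
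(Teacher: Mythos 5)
Your proposal is correct and follows essentially the same route as the paper's own (outlined) argument: conjugation by the modulation operators $M_t$, identification of $\C_{\nu}$ with an algebra of $\nu$-absolutely convergent operator-valued Fourier series via $A \mapsto f_A$, pointwise invertibility of $f_A(t)$ in $\B(\ell^2(\mathbb{Z}^d;\Hil))$, Baskakov's weighted non-commutative Wiener lemma \cite[Theorem 1]{Baskakov1997} (Bochner--Phillips in the unweighted case) with the GRS-condition securing semisimplicity of $L_{\nu}(\mathbb{Z}^d)$, and Hulanicki's Lemma for the symmetry claim. The only slight overstatement is that $A \mapsto f_A$ maps \emph{onto} the full algebra $\mathcal{A}_{\nu}$ --- its image is only the subalgebra of series whose $n$-th coefficient is an $n$-th side-diagonal matrix --- but this plays no role in the argument, since all that is needed is that the Fourier coefficients of $f_{A^{-1}}$ are precisely the side-diagonals $D_{A^{-1}}(n)$, which you use correctly.
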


\begin{remark}
In \cite[Theorem 7.2]{baskri14} quantitative estimates for $\Vert A^{-1}\Vert_{\C_{\nu}}$ have been derived for continuous analogs of $\C_{\nu}$. As described in the last paragraph of the introduction, we believe that a discrete analogue of \cite[Theorem 7.2]{baskri14} can be derived via discretization as mentioned in \cite[Remark 5.6]{zbMATH06949773}. Moreover, we expect that methods of \cite{Bas97} could be used to obtain quantitative (i.e., stronger) statements regarding other Wiener pairs discussed so far. We leave a detailed investigation on that matter to future research.
\end{remark}

%%%%%%%%%%%%%%%%%%%%%%%%%%%%%%%%%%%%%%%%%%%%%

\section{Anisotropic decay conditions}\label{Anisotropic decay conditions}

In this section we will derive further examples of Wiener pairs from a given Wiener pair $\A \subseteq \B(\ell^2(X;\Hil))$, where we assume $\A$ to be \emph{solid}, meaning that if $A = (A_{k,l})_{k,l \in X} \in \mathcal{A}$ and $B = (B_{k,l})_{k,l\in X}$ is a $\B(\Hil)$-valued matrix such that $\Vert B_{k,l} \Vert \leq \Vert A_{k,l} \Vert$ for all $k,l \in X$, then $B \in \mathcal{A}$ and $\Vert B \Vert_{\mathcal{A}} \leq \Vert A \Vert_{\mathcal{A}}$. This will be done by considering derivations on $\A$ (see Section \ref{Banach algebras}). 
The analysis of this subsection is based on the article \cite{groeklotz10} by Gröchenig and Klotz. Hence we will often only point out the main ideas and refer the reader to \cite{groeklotz10} for the details.

\begin{proposition}\label{d=1derivationthm}\cite[Theorem 3.4]{groeklotz10}
Let $\A$ be a symmetric unital Banach (*-) algebra and $\delta: \D(\delta) \longrightarrow \A$ be a symmetric derivation on $\A$ with $\mathcal{I}_{\A} \in \D(\delta)$. Then $\D(\delta)\subset \A$ is a symmetric unital Banach (*-) algebra and in particular a Wiener pair. Moreover, the quotient rule
$$\delta(A^{-1})=-A^{-1} \delta(A) A^{-1}$$
is valid and yields the explicit norm estimate 
$$\Vert A^{-1} \Vert_{\D(\delta)} \leq \Vert A^{-1} \Vert_{\A}^2 \Vert A \Vert_{\D(\delta)}$$
for all $A\in \D(\delta)$.
\end{proposition}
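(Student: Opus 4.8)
The plan is to derive everything from the quotient rule, whose genuine content is the implication $A\in\D(\delta),\ A^{-1}\in\A \Rightarrow A^{-1}\in\D(\delta)$. First I would record the harmless normalization $\delta(\mathcal{I}_{\A})=0$, obtained by applying the Leibniz rule (\ref{Leibniz}) to $\mathcal{I}_{\A}=\mathcal{I}_{\A}\cdot\mathcal{I}_{\A}$; this gives $\delta(\lambda\mathcal{I}_{\A}-B)=-\delta(B)$ for every $B\in\D(\delta)$, $\lambda\in\CC$. The base case is then cheap: since $\D(\delta)$ is itself a Banach algebra under the submultiplicative graph norm, for any $B\in\D(\delta)$ and $|\lambda|>\Vert B\Vert_{\D(\delta)}$ the Neumann series $\sum_{n\ge 0}\lambda^{-(n+1)}B^{n}$ converges in $\D(\delta)$, and its $\D(\delta)$-limit must coincide with the $\A$-limit $R_B(\lambda):=(\lambda\mathcal{I}_{\A}-B)^{-1}$ by continuity of the inclusion $\D(\delta)\hookrightarrow\A$. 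Hence $R_B(\lambda)\in\D(\delta)$ for all large $|\lambda|$.

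Second, I would reduce an arbitrary invertible $A\in\D(\delta)$ to a positive one. Since $\D(\delta)$ is a $*$-algebra, $P:=A^*A\in\D(\delta)$ is self-adjoint and invertible in $\A$, and $A^{-1}=P^{-1}A^*$; thus it suffices to prove $P^{-1}\in\D(\delta)$. Here the symmetry of $\A$ enters decisively: as $\A$ is symmetric and $P$ is positive and invertible, $\sigma_{\A}(P)\subseteq(0,\infty)$, so $\sigma_{\A}(P)$ is a compact subset of some interval $[a,b]$ with $a>0$, and its complement $\rho_{\A}(P)=\CC\setminus\sigma_{\A}(P)$ is connected and contains $0$. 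I would then propagate $\D(\delta)$-membership across $\rho_{\A}(P)$ by examining $U:=\{\lambda\in\rho_{\A}(P):R_P(\lambda)\in\D(\delta)\}$. This set is nonempty by the base case. It is open, since about any $\lambda_0\in U$ the expansion $R_P(\lambda)=\sum_{n\ge 0}(\lambda_0-\lambda)^n R_P(\lambda_0)^{n+1}$ converges in the graph norm for $|\lambda-\lambda_0|<\Vert R_P(\lambda_0)\Vert_{\D(\delta)}^{-1}$, again because $\D(\delta)$ is a Banach algebra. It is closed in $\rho_{\A}(P)$: for $\lambda_n\in U$ with $\lambda_n\to\lambda_*\in\rho_{\A}(P)$, applying $\delta$ to $(\lambda_n\mathcal{I}_{\A}-P)R_P(\lambda_n)=\mathcal{I}_{\A}$ (legitimate as both factors lie in $\D(\delta)$) gives $\delta(R_P(\lambda_n))=R_P(\lambda_n)\delta(P)R_P(\lambda_n)$; since $R_P(\lambda_n)\to R_P(\lambda_*)$ in $\A$, both $R_P(\lambda_n)$ and $\delta(R_P(\lambda_n))$ converge in $\A$, and closedness of $\delta$ forces $R_P(\lambda_*)\in\D(\delta)$. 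Connectedness of $\rho_{\A}(P)$ then yields $U=\rho_{\A}(P)$, so in particular $P^{-1}=-R_P(0)\in\D(\delta)$ and therefore $A^{-1}=P^{-1}A^*\in\D(\delta)$, which is the asserted inverse-closedness.

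Third, I would close out the remaining assertions. With $A,A^{-1}\in\D(\delta)$ now established, applying $\delta$ to $A^{-1}A=\mathcal{I}_{\A}$ gives $\delta(A^{-1})A+A^{-1}\delta(A)=0$, i.e.\ the quotient rule $\delta(A^{-1})=-A^{-1}\delta(A)A^{-1}$. The norm estimate follows from $\Vert A^{-1}\Vert_{\D(\delta)}=\Vert A^{-1}\Vert_{\A}+\Vert A^{-1}\delta(A)A^{-1}\Vert_{\A}\leq \Vert A^{-1}\Vert_{\A}+\Vert A^{-1}\Vert_{\A}^2\Vert\delta(A)\Vert_{\A}$ together with the elementary bound $1\leq\Vert A\Vert_{\A}\Vert A^{-1}\Vert_{\A}$, which upgrades the first summand to $\Vert A^{-1}\Vert_{\A}^2\Vert A\Vert_{\A}$ and assembles to $\Vert A^{-1}\Vert_{\A}^2\Vert A\Vert_{\D(\delta)}$. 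Finally, symmetry of $\D(\delta)$ is automatic: inverse-closedness is equivalent to spectral invariance $\sigma_{\D(\delta)}(B)=\sigma_{\A}(B)$, so $\sigma_{\D(\delta)}(B^*B)=\sigma_{\A}(B^*B)\subseteq[0,\infty)$ for every $B\in\D(\delta)$.

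The step I expect to be the main obstacle is the passage from elements near the identity to an arbitrary invertible $A$. Naively differentiating $A^{-1}A=\mathcal{I}_{\A}$ is circular (it presupposes $A^{-1}\in\D(\delta)$), and the elementary Neumann argument only reaches the region $\{|\lambda|>\Vert P\Vert_{\D(\delta)}\}$, with no a priori guarantee that $0$ lies in the component of the resolvent set thereby covered. The symmetry hypothesis on $\A$ is exactly what resolves this: it pins the spectrum of the auxiliary positive element $P=A^*A$ onto the positive real axis, making $\rho_{\A}(P)$ connected and letting the clopen argument carry $\D(\delta)$-membership all the way to $\lambda=0$.
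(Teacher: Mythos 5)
Your proof is correct, so the main thing to note is what it is being compared against: the paper does not prove this proposition at all --- it is quoted from Gr\"ochenig--Klotz \cite[Theorem 3.4]{groeklotz10}, and the surrounding text explicitly defers the details to that reference. Measured against the argument of the cited source, your route is genuinely different in its core step. The standard proof there makes the same reduction to the positive element $P=A^*A$ (symmetry of $\A$ forcing $\sigma_{\A}(P)\subseteq[a,b]\subset(0,\infty)$), but then inverts $P$ by a single Neumann series: writing $P^{-1}=\mu^{-1}\sum_{n\geq 0}C^n$ with $C=\mathcal{I}_{\A}-P/\mu$ and $r_{\A}(C)<1$, one bounds $\Vert\delta(C^n)\Vert_{\A}\leq n M^2\theta^{n-1}\Vert\delta(C)\Vert_{\A}$ via the Leibniz rule and Gelfand's formula, so the partial sums are Cauchy in the graph norm and closedness of $\delta$ places the limit in $\D(\delta)$. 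You instead replace this quantitative spectral-radius estimate by a qualitative connectedness argument: membership of the resolvent $R_P(\lambda)$ in $\D(\delta)$ is propagated from $\vert\lambda\vert>\Vert P\Vert_{\D(\delta)}$ down to $\lambda=0$, using that the set $U=\{\lambda\in\rho_{\A}(P):R_P(\lambda)\in\D(\delta)\}$ is open (resolvent expansion in the Banach algebra $\D(\delta)$) and closed in $\rho_{\A}(P)$ (the identity $\delta(R_P(\lambda))=R_P(\lambda)\delta(P)R_P(\lambda)$ combined with closedness of $\delta$), while symmetry guarantees $\rho_{\A}(P)$ is connected. Both proofs invoke symmetry and closedness at the same structural points, but yours trades Gelfand's formula for a clopen argument and is fully self-contained --- a real asset here, since the paper supplies no proof. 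Your remaining steps are accurate as well: the quotient rule follows legitimately from differentiating $A^{-1}A=\mathcal{I}_{\A}$ only \emph{after} inverse-closedness is established (you correctly flag that doing so earlier would be circular); the estimate $\Vert A^{-1}\Vert_{\D(\delta)}\leq\Vert A^{-1}\Vert_{\A}^2\Vert A\Vert_{\D(\delta)}$ follows from $1\leq\Vert A\Vert_{\A}\Vert A^{-1}\Vert_{\A}$, which is available because $\Vert\mathcal{I}_{\A}\Vert_{\A}\geq 1$ in any unital Banach algebra; and symmetry of $\D(\delta)$ via spectral invariance is exactly the intended conclusion.
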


The above statement can be generalized by iterative means via \emph{commuting derivations} \cite{groeklotz10}. More precisely, we iteratively define compositions of derivations via
$$\delta_{1}\dots \delta_{n}: \D(\delta_{1}\dots \delta_{n})= \D(\delta_{1}, \D( \delta_{2} \dots \delta_{n})) \longrightarrow \D( \delta_{2} \dots \delta_{n}) \subseteq \A, $$
where we assume that the operators $\delta_{n}$ commute pairwise and that $\D(\delta_{1} \dots \delta_{n})$ is independent of the order of the operators $\delta_{n}$ ($1\leq n \leq d$). Then for any multi-index $\alpha = (\alpha_j)_{j=1}^d \in \mathbb{N}_0^d$, the operator $\delta^{\alpha} = \prod_{j=1}^d \delta_j^{\alpha_j}$ and its domain $\D(\delta^{\alpha})$ are well-defined. We equip $\D(\delta^{\alpha})$ with the norm
$$\Vert A \Vert_{\D(\delta^{\alpha})} = \sum_{\beta \leq \alpha} \Vert \delta^{\beta}(A)\Vert_{\A},$$
which is just the (iteratively defined) graph norm on the corresponding preceding derivation algebra $\D(\delta^{\beta})$ ($\vert\beta \vert = \vert \alpha \vert -1$). Furthermore, the Leibniz rule (\ref{Leibniz}) implies the generalized Leibniz rule
$$\delta(AB) = \sum_{\beta \leq \alpha} \binom{\alpha}{\beta} \delta^{\beta}(A)\delta^{\alpha - \beta}(B) \qquad (\forall A,B \in \D(\delta^{\alpha})).$$
This yields  
$$\Vert AB \Vert_{\D(\delta^{\alpha})} \leq C \Vert A \Vert_{\D(\delta^{\alpha})} \Vert B \Vert_{\D(\delta^{\alpha})} \qquad (\forall A,B \in \D(\delta^{\alpha})),$$
where the positive constant $C$ depends only on $\alpha$. Hence (see also \cite[Lemma 3.6]{groeklotz10}) $\D(\delta^{\alpha})$ is an involutive subalgebra of $\A$. By passing on to the equivalent norm 
\begin{equation}\label{eqnorm}
    \vert\Vert A \Vert\vert_{\D(\delta^{\alpha})} := \sup_{\substack{B\in \D(\delta^{\alpha}) \\ \Vert B \Vert_{\D(\delta^{\alpha})} = 1}} \Vert A\cdot B \Vert_{\D(\delta^{\alpha})} \qquad (A \in \D(\delta^{\alpha})),
\end{equation}
we obtain that $\D(\delta^{\alpha})$ is a Banach *-algebra. 

By iterative applications of Proposition \ref{d=1derivationthm}, one can show the following:

\begin{proposition}\label{commutingderivationsthm}\cite[Proposition 3.7]{groeklotz10}) 
Let $\A$ be a symmetric Banach algebra and let $\lbrace \delta_1, \dots , \delta_d \rbrace$ be a set of commuting derivations with $\mathcal{I}_{\A} \in \D(\delta_j)$ for all $1\leq j \leq d$. Then $\D(\delta^{\alpha})$ is inverse-closed in $\A$ for each multi-index $\alpha \in \mathbb{N}_0^d$.
\end{proposition}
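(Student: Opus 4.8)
The plan is to argue by induction on the order $|\alpha|$ of the multi-index, peeling off one derivation at a time so that each step is reduced to the single-derivation statement of Proposition \ref{d=1derivationthm}. The base case $\alpha = 0$ is trivial, since $\D(\delta^0) = \A$ is tautologically inverse-closed in itself and symmetric by hypothesis. For the inductive step I would assume that $\D(\delta^{\beta})$ is inverse-closed in $\A$ for every multi-index $\beta$ with $|\beta| < |\alpha|$, and hence, by the \emph{moreover} clause of Proposition \ref{d=1derivationthm}, that each such $\D(\delta^{\beta})$ is itself a symmetric Banach $*$-algebra.

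Given $\alpha$ with $|\alpha| \geq 1$, I would fix an index $j$ with $\alpha_j \geq 1$ and set $\beta = \alpha - e_j$, where $e_j$ is the $j$-th standard unit vector, so that $|\beta| = |\alpha| - 1$ and (by the assumed order-independence of the iterated domains) $\delta^{\alpha} = \delta_j \delta^{\beta}$. Writing $\A_{\beta} := \D(\delta^{\beta})$, which is a symmetric Banach $*$-algebra by the inductive hypothesis, the heart of the argument is to view $\delta_j$ as a derivation on $\A_{\beta}$ rather than on $\A$, namely as the operator on $\A_{\beta}$ with domain $\{A \in \A_{\beta} : \delta_j(A) \in \A_{\beta}\}$. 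The key bookkeeping step is the identification $\D(\delta^{\alpha}) = \{A \in \A_{\beta} : \delta_j(A) \in \A_{\beta}\}$, which I would establish by splitting any $\gamma \leq \alpha$ into the case $\gamma_j \leq \beta_j$ (controlled by $A \in \A_{\beta}$) and the case $\gamma_j = \alpha_j$ (controlled by $\delta^{\gamma - e_j}(\delta_j A) \in \A$, using pairwise commutativity); the two resulting index ranges union to exactly $\{\gamma : \gamma \leq \alpha\}$. The same commutation identities show that $\delta_j$ maps this domain into $\A_{\beta}$, so it is a genuine operator on $\A_{\beta}$.

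Next I would verify that $\delta_j$, regarded as an operator on $\A_{\beta}$ with domain $\D(\delta^{\alpha})$, satisfies the hypotheses of Proposition \ref{d=1derivationthm}: the Leibniz rule (\ref{Leibniz}) and the symmetry conditions are inherited verbatim from the corresponding properties of $\delta_j$ on $\A$; the identity lies in $\D(\delta^{\alpha})$ because $\delta_j(\mathcal{I}_{\A}) = 0$ for every derivation; and closedness on $\A_{\beta}$ follows from closedness of $\delta_j$ on $\A$ together with the fact that convergence in the graph norm of $\A_{\beta}$ forces convergence of all $\delta^{\gamma}$, $\gamma \leq \beta$, in $\A$. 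Applying Proposition \ref{d=1derivationthm} to the symmetric algebra $\A_{\beta}$ and this derivation then yields that $\D(\delta^{\alpha})$ is inverse-closed in $\A_{\beta}$ (and again a symmetric Banach $*$-algebra). Finally I would invoke transitivity of inverse-closedness, which is immediate from the definition (\ref{Wienerpair}): if an element of $\D(\delta^{\alpha})$ is invertible in $\A$, then its inverse lies in $\A_{\beta}$ since $\A_{\beta}$ is inverse-closed in $\A$, and then lies in $\D(\delta^{\alpha})$ since $\D(\delta^{\alpha})$ is inverse-closed in $\A_{\beta}$. This closes the induction.

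The step I expect to require the most care is the inductive bookkeeping, specifically the exact domain identification $\D(\delta^{\alpha}) = \{A \in \A_{\beta} : \delta_j(A) \in \A_{\beta}\}$ and the closedness of the restricted derivation on $\A_{\beta}$; both hinge entirely on the pairwise commutativity of the $\delta_j$ and on the order-independence of the iterated domains assumed in the setup. The cleanest exposition would isolate these commutation facts first and then feed them into the abstract single-derivation result, so that the quotient rule and norm estimates already supplied by Proposition \ref{d=1derivationthm} need not be re-derived.
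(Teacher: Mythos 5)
Your proof is correct and is exactly the argument the paper intends: the paper only cites \cite[Proposition 3.7]{groeklotz10} and remarks that the result follows ``by iterative applications of Proposition \ref{d=1derivationthm}'', and your induction on $\vert\alpha\vert$ --- viewing $\delta_j$ as a closed symmetric derivation on the symmetric Banach algebra $\A_{\beta}=\D(\delta^{\beta})$ with domain $\D(\delta^{\alpha})$, applying the single-derivation result, and concluding by transitivity of inverse-closedness --- is precisely that iteration, carried out with the bookkeeping (order-independence of the iterated domains, closedness in the graph norm) made explicit.
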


Following \cite{groeklotz10}, we may apply the above machinery by considering derivations defined by the commutator with respect to a suitable diagonal matrix. Similar ideas have been also pursued in \cite[Section 5.5]{zbMATH06949773}. Note that the commutator (\ref{commutator}) considered below is essentially the infinitesimal generator of the group defining the functions $f_A$ in Section \ref{The Baskakov-Gohberg-Sjöstrand algebra} and $\check{\widetilde{T}}(h)$ from \cite[Section 5.5]{zbMATH06949773}, respectively.

\begin{theorem}\label{anisotropicA}
Let $\A \subseteq \B(\ell^2(X;\Hil))$ be a Wiener pair and assume that $\A$ is solid. Then for every $\alpha = (\alpha_j)_{j=1}^d \in \mathbb{N}_0^d$, the class of $\B(\Hil)$-valued matrices $A=[A_{k,l}]_{k,l\in X}$ satisfying 
\begin{equation}\label{anidecay1}
    \left\Vert \left[\prod_{j=1}^d (1+\vert k_j - l_j\vert)^{\alpha_j} A_{k,l}\right]_{k,l\in X} \right\Vert_{\A} < \infty 
\end{equation}
is a solid unital Banach *-algebra with respect to the norm (\ref{eqnorm}), which is inverse-closed in $\B(\ell^2(X;\Hil))$ and in particular a symmetric Banach algebra.
\end{theorem}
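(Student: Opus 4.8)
The plan is to realize the anisotropic class as the domain $\D(\delta^{\alpha})$ of a composition of commuting symmetric derivations on $\A$, and then to combine Proposition \ref{commutingderivationsthm} with the transitivity of inverse-closedness. For each $1\leq j\leq d$ I would define a map $\delta_j$ on $\D(\delta_j)=\{A\in\A:[\,i(k_j-l_j)A_{k,l}\,]_{k,l\in X}\in\A\}$ by $\delta_j(A)=[\,i(k_j-l_j)A_{k,l}\,]_{k,l\in X}$. Splitting $k_j-l_j=(k_j-n_j)+(n_j-l_j)$ inside the matrix product $[AB]_{k,l}=\sum_{n\in X}A_{k,n}B_{n,l}$ shows immediately that $\delta_j(AB)=\delta_j(A)B+A\delta_j(B)$, so the Leibniz rule (\ref{Leibniz}) holds; moreover $\mathcal{I}_{\A}\in\D(\delta_j)$ since $k_j-l_j=0$ on the diagonal and thus $\delta_j(\mathcal{I}_{\A})=0\in\A$.

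The factor $i$ is essential for symmetry. Using the involution formula (\ref{involution}), i.e. $(A^*)_{k,l}=(A_{l,k})^*$, one computes $\delta_j(A^*)_{k,l}=i(k_j-l_j)(A_{l,k})^*$ and $(\delta_j(A)^*)_{k,l}=\overline{i(l_j-k_j)}\,(A_{l,k})^*=i(k_j-l_j)(A_{l,k})^*$, so $\delta_j(A^*)=\delta_j(A)^*$; the purely real multiplier $k_j-l_j$ alone would produce the opposite sign. Closedness of $\delta_j$ rests on the continuity of the coordinate evaluations $A\mapsto A_{k,l}$: since $\A$ embeds continuously into $\B:=\B(\ell^2(X;\Hil))$ and $\sup_{k,l}\Vert A_{k,l}\Vert\leq\Vert A\Vert_{\B}$ by Lemma \ref{opnormineq}(b), convergence in $\A$ forces entrywise operator-norm convergence, whence $A^{(n)}\to A$ and $\delta_jA^{(n)}\to C$ in $\A$ give $C_{k,l}=i(k_j-l_j)A_{k,l}$, i.e. $\delta_j(A)=C$. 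The $\delta_j$ are diagonal scalar multipliers, so they commute pairwise and $\D(\delta^{\alpha})$ is order-independent with $\delta^{\alpha}(A)=[\,\prod_{j=1}^{d}(i(k_j-l_j))^{\alpha_j}A_{k,l}\,]_{k,l\in X}$. Since $\B$ is symmetric and $\A$ is inverse-closed in $\B$, Hulanicki's Lemma \ref{Hulanickilemma} shows $\A$ is a symmetric unital Banach *-algebra, so Proposition \ref{commutingderivationsthm} applies and $\D(\delta^{\alpha})$ is inverse-closed in $\A$ and, under the norm (\ref{eqnorm}), a symmetric Banach *-algebra.

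It remains to identify $\D(\delta^{\alpha})$ with the class (\ref{anidecay1}) and to push inverse-closedness down to $\B$. For the identification I would use solidity together with the binomial expansion
$$\prod_{j=1}^{d}(1+|k_j-l_j|)^{\alpha_j}=\sum_{\beta\leq\alpha}\binom{\alpha}{\beta}\prod_{j=1}^{d}|k_j-l_j|^{\beta_j}.$$
Because $\delta^{\beta}(A)$ and $[\,\prod_{j=1}^{d}|k_j-l_j|^{\beta_j}A_{k,l}\,]_{k,l}$ have identical entrywise operator norms, solidity gives equality of their $\A$-norms; the bound $\prod_{j}|k_j-l_j|^{\beta_j}\leq\prod_{j}(1+|k_j-l_j|)^{\alpha_j}$ for $\beta\leq\alpha$ then yields $\Vert\delta^{\beta}(A)\Vert_{\A}\leq\Vert[\prod_j(1+|k_j-l_j|)^{\alpha_j}A_{k,l}]\Vert_{\A}$, while applying the triangle inequality to the expansion above yields the reverse estimate up to the combinatorial constant $\sum_{\beta\leq\alpha}\binom{\alpha}{\beta}$. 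Hence the graph norm $\Vert A\Vert_{\D(\delta^{\alpha})}=\sum_{\beta\leq\alpha}\Vert\delta^{\beta}(A)\Vert_{\A}$ is equivalent to the norm in (\ref{anidecay1}), and the two classes coincide as sets (membership of the weighted matrix in $\A$ forces $A\in\A$ by solidity, as every weight factor is $\geq 1$). Solidity of $\D(\delta^{\alpha})$ is inherited from that of $\A$, since entrywise domination is preserved after multiplication by the nonnegative weights $\prod_j|k_j-l_j|^{\beta_j}$. Finally inverse-closedness transfers by transitivity: if $A\in\D(\delta^{\alpha})$ is invertible in $\B$, then $A^{-1}\in\A$ ($\A$ inverse-closed in $\B$), and then $A^{-1}\in\D(\delta^{\alpha})$ ($\D(\delta^{\alpha})$ inverse-closed in $\A$); symmetry was already obtained above.

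The step I expect to be the main obstacle is this last identification, and precisely there solidity is indispensable: it is what lets one replace the complex/signed monomial multipliers $\prod_j(i(k_j-l_j))^{\beta_j}$ produced by the derivations with the nonnegative decay weights $\prod_j|k_j-l_j|^{\beta_j}$ without affecting the $\A$-norm. A secondary technical point is the closedness of the $\delta_j$, which relies on the continuity of the embedding $\A\hookrightarrow\B$; the sign bookkeeping forcing the factor $i$ in the definition of $\delta_j$ is easy to get wrong but otherwise routine.
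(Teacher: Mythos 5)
Your proposal is correct and follows essentially the same route as the paper's proof: both realize the anisotropic class as the domain $\D(\delta^{\alpha})$ of commuting derivations given by commutators with coordinate diagonal matrices, invoke Proposition \ref{commutingderivationsthm} (after noting that $\A$ is symmetric by Hulanicki's Lemma \ref{Hulanickilemma}), use solidity to identify $\D(\delta^{\alpha})$ with the class (\ref{anidecay1}), and transfer inverse-closedness down to $\B(\ell^2(X;\Hil))$ by transitivity.

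One point where you actually improve on the paper's write-up deserves emphasis: the paper defines $\delta_j(A)=[M_j,A]$ with the real diagonal $M_j=\text{diag}[k_j\,\mathcal{I}_{\B(\Hil)}]_{k\in X}$ and asserts $\delta_j(A^*)=\delta_j(A)^*$; in fact, with the involution (\ref{involution}) this real commutator satisfies $\delta_j(A^*)=-\delta_j(A)^*$, so your insertion of the factor $i$ is exactly what is needed to make $\delta_j$ a symmetric derivation in the sense required by the framework of Section 2.1 and Proposition \ref{commutingderivationsthm} $-$ and, as you observe, it changes neither the domain nor (by solidity) the graph norm. Your binomial-expansion argument likewise fills in the step the paper compresses into ``follows from the solidity of $\A$'', and your closedness argument via Lemma \ref{opnormineq}(b) together with the continuous embedding $\A\hookrightarrow\B(\ell^2(X;\Hil))$ is a legitimate variant of the paper's solidity-based one (the continuity of that embedding is standard for a Banach *-algebra sitting inside a C*-algebra, by the same spectral-radius argument as in Proposition \ref{propembedding}, but it is worth stating since the definition of Wiener pair does not postulate it).
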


\begin{proof}
\emph{Step 1.} For $1\leq j \leq d$, let $M_j$ be the $\B(\Hil)$-valued diagonal matrix given by $M_j=\text{diag}[k_j \cdot \mathcal{I}_{\B(\Hil)}]_{k\in X}$, where $k_j$ is the $j$-th coordinate of the index $k = (k_j)_{j=1}^d \in X \subseteq \mathbb{R}^d$. Then the formal commutator 
\begin{equation}\label{commutator}
    \delta_{j}(A) := [M_j,A] = M_j A-AM_j
\end{equation}
has the canonical matrix representation $$\mathbb{M}(\delta_{j}(A)) = [(k_j-l_j)A_{k,l}]_{k,l\in X}.$$

\emph{Step 2.} Fix some $j\in \lbrace 1, \dots, d\rbrace$. Consider the subspace 
$$\D(\delta_j) := \lbrace A\in \A : \delta_j(A) \in \A \rbrace$$
of $\A$. We will show that 
$$\delta_j : \D(\delta_j) \longrightarrow \A$$ 
is a derivation on $\A$. It is quickly verified, that the Leibniz rule (\ref{Leibniz}) is satisfied by $\delta_j$ on $\D(\delta_j)$. It is also clear that $\D(\delta_j)$ is invariant under involution and that $\delta_j(A^*) = \delta_j(A)^*$ for all $A\in \D(\delta_j)$. Hence, in order to show that $\delta_j$ is a derivation on $\A$, we have to verify that $\delta_j$ is a closed operator. To this end, assume that $\lbrace A^{(n)}\rbrace_{n=1}^{\infty} = \lbrace [A_{k,l}^{(n)}]_{k,l\in X}\rbrace_{n=1}^{\infty}$ is a sequence in $\D(\delta_j)$ converging in $\D(\delta_j)$ to some $\B(\Hil)$-valued matrix $A = [A_{k,l}]_{k,l\in X}$ and that $\lbrace \delta_j(A^{(n)})\rbrace_{n=1}^{\infty} = \lbrace [(k_j-l_j)A_{k,l}^{(n)}]_{k,l\in X}\rbrace_{n=1}^{\infty}$ converges in $\A$ to some $\B(\Hil)$-valued matrix $B = [B_{k,l}]_{k,l\in X}$. Then both of these sequences are Cauchy sequences in $\A$ and thus their respective limits $A$ and $B$ are contained in $\A$. Since the norm on $\A$ is solid and hence only depends on operator norms of the entries of the corresponding matrices, this implies that for each $k,l\in X$, the sequence $\lbrace A_{k,l}^{(n)}\rbrace_{n=1}^{\infty}$ is Cauchy sequence in $\B(\Hil)$ converging to $A_{k,l} \in \B(\Hil)$. Consequently, the Cauchy sequence $\lbrace (k_j-l_j)A_{k,l}^{(n)}\rbrace_{n=1}^{\infty}$ converges to $(k_j-l_j)A_{k,l} \in \B(\Hil)$ for each $k,l\in X$. On the other hand, by the same argument we have that $\lbrace (k_j-l_j)A_{k,l}^{(n)}\rbrace_{n=1}^{\infty}$ is a Cauchy sequence in $\B(\Hil)$ converging to $B_{k,l} \in \B(\Hil)$, for each $k,l\in X$. Thus $B_{k,l} = (k_j-l_j)A_{k,l}$ for all $k,l \in X$, which shows that $\delta_j$ is a closed operator. 

\emph{Step 3.} For each $j\in \lbrace 1, \dots, d\rbrace$, $\mathcal{I}_{\A} = \text{diag}[{\mathcal{I}_{\B(\Hil)}}]_{k\in X} \in \D(\delta_j)$, since $\delta_j(\mathcal{I}_{\A}) = 0 \in \A$, and the operators $\delta_j$ commute pairwise. Thus $\lbrace \delta_1, \dots, \delta_d\rbrace$ is a set of commuting derivations. Hence, for each multi-index $\alpha \in \mathbb{N}_0^d$, $\delta^{\alpha}$ and its domain $\D(\delta^{\alpha})$ are well-defined. Furthermore, the associated norm
$$\Vert A \Vert_{\D(\delta^{\alpha})} = \sum_{\beta \leq \alpha} \Vert \delta^{\beta}(A) \Vert_{\A} = \sum_{\beta \leq \alpha} \left\Vert \left[ \prod_{j=1}^d \vert k_j - l_j\vert^{\beta_j} A_{k,l} \right]_{k,l\in X} \right\Vert_{\A}$$
is solid, since the norm on $\A$ is solid. In particular, since $\A$ is symmetric due to Hulanicki's Lemma, Proposition \ref{commutingderivationsthm} yields that $\D(\delta^{\alpha})$ is inverse-closed in $\A$ and consequently also in $\B(\ell^2(X;\Hil))$. Another application of Hulanicki's Lemma yields that $\D(\delta^{\alpha})$ is symmetric.

\emph{Step 4.} Finally, condition 
(\ref{anidecay1}) is equivalent to the condition $[A_{k,l}]_{k,l\in X} \in \D(\delta^{\alpha})$. Indeed, this follows from the solidity of $\A$ and from $\big[\prod_{j=1}^d (k_j - l_j)^{\alpha_j} A_{k,l}\big]_{k,l\in X} = \delta^{\alpha}(A)$. This completes the proof.
\end{proof}

Note that all of our previously proven examples of inverse-closed sub-algebras $\A$ of $\B(\ell^2(X;\Hil))$ are solid. Hence the above theorem applies to each of these cases. We explicitly state the following corollaries of Theorem \ref{anisotropicA} applied to the Wiener pairs $\J_s \subseteq \B(\ell^2(X;\Hil))$ (see Corollary \ref{Jaffardinverseclosed}), $\MS_{\nu}^1 \subseteq \B(\ell^2(X;\Hil))$ (see Theorem \ref{Schurspectraltheorem}) and $\C_{\nu} \subseteq \B(\ell^2(\mathbb{Z}^d;\Hil))$ (see Theorem \ref{Cvinverseclosed}) respectively.

\begin{corollary}\label{anisotropicpolynomial}
For every $s>d$ and $\alpha = (\alpha_j)_{j=1}^d \in \mathbb{N}_0^d$, the class of $\B(\Hil)$-valued matrices $[A_{k,l}]_{k,l\in X}$ satisfying the anisotropic polynomial decay condition
$$\Vert A_{k,l} \Vert \leq C(1+\vert k-l \vert)^{-s} \prod_{j=1}^d (1+\vert k_j - l_j\vert)^{-\alpha_j} \qquad (\forall k,l\in X)$$
is a solid unital Banach *-algebra which is inverse-closed in $\B(\ell^2(X;\Hil))$ and thus symmetric.
\end{corollary}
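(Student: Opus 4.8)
The plan is to invoke Theorem \ref{anisotropicA} with the specific choice $\A = \J_s$. First I would recall that Theorem \ref{Jaffardinverseclosed} already establishes that $\J_s \subseteq \B(\ell^2(X;\Hil))$ is a Wiener pair for every $s>d$, so the only hypothesis of Theorem \ref{anisotropicA} still to be checked is the solidity of $\J_s$. This is immediate from the form of the Jaffard norm $\Vert A \Vert_{\J_s} = \sup_{k,l\in X} \Vert A_{k,l}\Vert \, \nu_s(k-l)$: since it depends on the entries only through their operator norms, the inequality $\Vert B_{k,l}\Vert \leq \Vert A_{k,l}\Vert$ for all $k,l\in X$ forces $\Vert B_{k,l}\Vert \, \nu_s(k-l) \leq \Vert A_{k,l}\Vert \, \nu_s(k-l)$, hence $B \in \J_s$ with $\Vert B\Vert_{\J_s} \leq \Vert A\Vert_{\J_s}$.

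With solidity in hand, Theorem \ref{anisotropicA} applies and immediately yields, for each $\alpha \in \mathbb{N}_0^d$, that the class of $\B(\Hil)$-valued matrices satisfying condition (\ref{anidecay1}) with $\A = \J_s$ is a solid unital Banach *-algebra, inverse-closed in $\B(\ell^2(X;\Hil))$ and therefore symmetric. The remaining task is purely one of unraveling condition (\ref{anidecay1}) for this particular algebra. Substituting the Jaffard norm, (\ref{anidecay1}) reads
$$\sup_{k,l\in X} \Vert A_{k,l}\Vert \, (1+\vert k-l\vert)^s \prod_{j=1}^d (1+\vert k_j - l_j\vert)^{\alpha_j} < \infty,$$
which is exactly the statement that there is a constant $C>0$ with
$$\Vert A_{k,l}\Vert \leq C\,(1+\vert k-l\vert)^{-s}\prod_{j=1}^d (1+\vert k_j - l_j\vert)^{-\alpha_j} \qquad (\forall k,l\in X).$$
Thus the class described by condition (\ref{anidecay1}) coincides with the class singled out by the anisotropic polynomial decay condition of the corollary, and the result follows.

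I do not expect any genuine obstacle in this argument, as the whole corollary is a direct specialization of Theorem \ref{anisotropicA}. The only two points demanding any attention are the verification of solidity, which is trivial given the explicit form of the Jaffard norm, and the final bookkeeping that matches the abstract condition (\ref{anidecay1}) against the explicit product of polynomial weights; neither involves any real difficulty.
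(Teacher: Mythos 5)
Your proposal is correct and follows exactly the paper's route: the paper states this corollary as a direct application of Theorem \ref{anisotropicA} to the Wiener pair $\J_s \subseteq \B(\ell^2(X;\Hil))$ from Theorem \ref{Jaffardinverseclosed}, noting that $\J_s$ is solid. Your verification of solidity from the explicit form of the Jaffard norm and your identification of condition (\ref{anidecay1}) with the anisotropic polynomial decay condition are precisely the (implicit) content of the paper's argument.
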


\begin{corollary}\label{anisotropicschur}
Let $\nu$ be an admissible weight in the sense of Definition \ref{charlyadmissibledef} which satisfies the weak growth condition 
$$\nu(x) \geq C (1+\vert x \vert)^{\delta} \qquad \text{for some } \delta\in (0,1], C>0.$$ Then for every $\alpha = (\alpha_j)_{j=1}^d \in \mathbb{N}_0^d$, the class of $\B(\Hil)$-valued matrices $[A_{k,l}]_{k,l\in X}$ satisfying the anisotropic Schur-type conditions 
\begin{flalign}
&\sup_{k\in X} \sum_{l\in X} \prod_{j=1}^d (1+\vert k_j - l_j \vert)^{\alpha_j} \Vert A_{k,l} \Vert \nu(k-l) <\infty, \notag \\ 
&\sup_{l\in X} \sum_{k\in X} \prod_{j=1}^d (1+\vert k_j - l_j \vert)^{\alpha_j}\Vert A_{k,l} \Vert \nu(k-l) <\infty \notag
\end{flalign}
is a solid unital Banach *-algebra which is inverse-closed in $\B(\ell^2(X;\Hil))$ and thus symmetric.
\end{corollary}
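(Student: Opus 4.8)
The plan is to obtain this statement as a direct instantiation of Theorem \ref{anisotropicA}, applied to the weighted Schur algebra $\MS_\nu^1 \subseteq \B(\ell^2(X;\Hil))$. Two hypotheses of Theorem \ref{anisotropicA} must be checked for the choice $\A = \MS_\nu^1$: that $\MS_\nu^1$ forms a Wiener pair with $\B(\ell^2(X;\Hil))$, and that $\MS_\nu^1$ is solid. The first is precisely the content of Theorem \ref{Schurspectraltheorem}, whose hypothesis is exactly the weak growth condition $\nu(x) \geq C(1+\vert x \vert)^{\delta}$ assumed in the corollary; this is the substantive analytic input, and I would simply invoke it.

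For solidity, I would observe that the norm (\ref{Schurnorm}) on $\MS_\nu^1$ depends on a matrix $A = [A_{k,l}]_{k,l\in X}$ only through the operator norms $\Vert A_{k,l}\Vert$ and is monotone in each of these. Hence, if $B = [B_{k,l}]_{k,l\in X}$ is a $\B(\Hil)$-valued matrix with $\Vert B_{k,l}\Vert \leq \Vert A_{k,l}\Vert$ for all $k,l\in X$ and $A\in \MS_\nu^1$, then replacing each $\Vert A_{k,l}\Vert$ by the smaller quantity $\Vert B_{k,l}\Vert$ in both suprema of (\ref{Schurnorm}) can only decrease them. This gives $B\in \MS_\nu^1$ together with $\Vert B\Vert_{\MS_\nu^1} \leq \Vert A\Vert_{\MS_\nu^1}$, establishing solidity with essentially no computation.

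With both hypotheses in place, Theorem \ref{anisotropicA} yields that, for every $\alpha\in \mathbb{N}_0^d$, the class of $\B(\Hil)$-valued matrices satisfying condition (\ref{anidecay1}) with $\A = \MS_\nu^1$ is a solid unital Banach *-algebra, inverse-closed in $\B(\ell^2(X;\Hil))$ and (via Hulanicki's Lemma) symmetric. It then remains only to unfold what (\ref{anidecay1}) means for this particular $\A$. Abbreviating the scalar factor by $w_\alpha(k,l) = \prod_{j=1}^d (1+\vert k_j - l_j\vert)^{\alpha_j}$, the matrix occurring in (\ref{anidecay1}) has entries $w_\alpha(k,l)\, A_{k,l}$; since $w_\alpha(k,l)$ is a nonnegative scalar it factors out of the operator norm, so $\Vert w_\alpha(k,l)\,A_{k,l}\Vert = w_\alpha(k,l)\,\Vert A_{k,l}\Vert$. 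Substituting this into (\ref{Schurnorm}) shows that the finiteness of the $\MS_\nu^1$-norm in (\ref{anidecay1}) is exactly the simultaneous finiteness of the two displayed suprema in the corollary, which completes the identification.

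I do not anticipate a genuine obstacle, as the corollary is a clean specialization of Theorem \ref{anisotropicA}. The only point requiring any care is confirming that both hypotheses of that theorem hold for $\MS_\nu^1$; among these, the Wiener-pair property from Theorem \ref{Schurspectraltheorem} carries all the analytic weight, whereas solidity and the translation of (\ref{anidecay1}) into the concrete anisotropic Schur-type conditions are purely formal.
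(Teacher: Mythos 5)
Your proposal is correct and takes essentially the same route as the paper: the corollary is obtained there exactly by applying Theorem \ref{anisotropicA} to $\A = \MS_{\nu}^1$, with the Wiener-pair hypothesis supplied by Theorem \ref{Schurspectraltheorem} and solidity read off from the fact that the norm (\ref{Schurnorm}) depends monotonically on the entry norms $\Vert A_{k,l}\Vert$ alone. Your unfolding of condition (\ref{anidecay1}) into the two displayed anisotropic Schur-type conditions is precisely the intended identification, so nothing is missing.
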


\begin{corollary}\label{anisotropicCv}
For every submultiplicative and symmetric weight satisfying the GRS-condition and every $\alpha = (\alpha_j)_{j=1}^d \in \mathbb{N}_0^d$, the class of $\B(\Hil)$-valued matrices $[A_{k,l}]_{k,l\in \mathbb{Z}^d}$ satisfying the condition
$$\sum_{l\in \mathbb{Z}^d} \prod_{j=1}^d (1+\vert l_j \vert)^{\alpha_j} \sup_{k\in \mathbb{Z}^d} \Vert A_{k,k-l} \Vert \nu(l) <\infty$$
is a solid unital Banach *-algebra which is inverse-closed in $\B(\ell^2(\mathbb{Z}^d;\Hil))$ and thus symmetric.
\end{corollary}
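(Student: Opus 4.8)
The plan is to obtain this corollary as a direct specialization of Theorem \ref{anisotropicA} to the Wiener pair $\C_{\nu} \subseteq \B(\ell^2(\mathbb{Z}^d;\Hil))$, i.e. with $\A = \C_{\nu}$ and $X = \mathbb{Z}^d$. To invoke that theorem I must first check its two hypotheses. The Wiener pair property is exactly the content of Theorem \ref{Cvinverseclosed}, which applies here because $\nu$ is assumed submultiplicative, symmetric and to satisfy the GRS-condition. It therefore remains to verify that $\C_{\nu}$ is solid.

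Solidity of $\C_{\nu}$ is immediate from the structure of its norm. Recall from (\ref{Cvnorm}) and (\ref{d_A}) that $\Vert A \Vert_{\C_{\nu}} = \sum_{l\in \mathbb{Z}^d} d_A(l)\nu(l)$ depends only on the side-diagonal suprema $d_A(l) = \sup_{k\in \mathbb{Z}^d} \Vert A_{k,k-l} \Vert$. Hence, if $B=[B_{k,l}]_{k,l\in \mathbb{Z}^d}$ is a $\B(\Hil)$-valued matrix with $\Vert B_{k,l} \Vert \leq \Vert A_{k,l} \Vert$ for all $k,l$, then taking suprema over $k$ along each side-diagonal gives $d_B(l) \leq d_A(l)$ for every $l\in \mathbb{Z}^d$, and therefore $B\in \C_{\nu}$ with $\Vert B \Vert_{\C_{\nu}} \leq \Vert A \Vert_{\C_{\nu}}$. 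This is precisely the solidity condition, so both hypotheses of Theorem \ref{anisotropicA} hold.

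With the theorem in force, the class of $\B(\Hil)$-valued matrices satisfying the abstract condition (\ref{anidecay1}) with $\A = \C_{\nu}$ is a solid unital Banach *-algebra, inverse-closed in $\B(\ell^2(\mathbb{Z}^d;\Hil))$ and symmetric. The last step is the bookkeeping that rewrites (\ref{anidecay1}) in the concrete form stated in the corollary. Setting $B = \big[\prod_{j=1}^d (1+\vert k_j-l_j\vert)^{\alpha_j} A_{k,l}\big]_{k,l\in \mathbb{Z}^d}$ and evaluating along the $l$-th side-diagonal, one has $B_{k,k-l} = \prod_{j=1}^d (1+\vert l_j\vert)^{\alpha_j} A_{k,k-l}$, so that
$$\Vert B \Vert_{\C_{\nu}} = \sum_{l\in \mathbb{Z}^d} \prod_{j=1}^d (1+\vert l_j\vert)^{\alpha_j}\, \sup_{k\in \mathbb{Z}^d} \Vert A_{k,k-l} \Vert\, \nu(l),$$
which is exactly the finiteness condition appearing in the statement. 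I do not expect a genuine obstacle here, since everything reduces to a specialization of an already-established general result; the only point requiring care is this final diagonal substitution, where the row/column weight $\prod_j(1+\vert k_j-l_j\vert)^{\alpha_j}$ must be correctly matched to the side-diagonal weight $\prod_j(1+\vert l_j\vert)^{\alpha_j}$ built into the $\C_{\nu}$-norm.
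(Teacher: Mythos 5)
Your proposal is correct and follows exactly the paper's own route: the paper obtains this corollary by applying Theorem \ref{anisotropicA} to the Wiener pair $\C_{\nu} \subseteq \B(\ell^2(\mathbb{Z}^d;\Hil))$ from Theorem \ref{Cvinverseclosed}, noting (without detail) that $\C_{\nu}$ is solid. Your explicit verification of solidity via $d_B(l) \leq d_A(l)$ and your side-diagonal substitution $B_{k,k-l} = \prod_{j=1}^d (1+\vert l_j\vert)^{\alpha_j} A_{k,k-l}$ simply spell out the bookkeeping the paper leaves implicit, and both computations are accurate.
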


%%%%%%%%%%%%%%%%%%%%%%%%%%%%%%%%%%%%%%%%%%%%

\section*{Acknowledgments}

The authors thank Hans Feichtinger, Karlheinz Gröchenig, Jakob Holböck, Nicki Holighaus, Andreas Klotz, Ilya Krishtal, Arvin Lamando, and Rossen Nenov for their valuable suggestions and related discussions. 
The authors also thank the anonymous reviewer for the many helpful references, comments and suggestions. 

This work is supported by the project P 34624 \emph{"Localized, Fusion and Tensors of Frames"} (LoFT) of the Austrian Science Fund (FWF).

\bibliographystyle{abbrv}

\bibliography{biblioall}
\end{document}